\newcommand{\klockan}{\the\hours:{\ifnum\minutes<10 0\fi}\the\minutes}
\newcommand{\tid}{\today\ \klockan}
\newcommand{\prtid}{\smash{\raise 10mm \hbox{\LaTeX ed \tid}}}
\renewcommand{\prtid}{}
\def\sectionmark#1{} %\markboth{{\sectnr #1}}{{\sectnr #1}}} %Journal
\def\subsectionmark#1{}
\newcommand{\sectnr}{\ifnum \c@secnumdepth >\z@
                 \thesection.\hskip 1em\relax \fi}
\def\@evenhead{\footnotesize\rm\thepage\hfil\leftmark\hfil\llap{\prtid}}
\def\@oddhead{\footnotesize\rm\rlap{\prtid}\hfil\rightmark\hfil\thepage}
\def\tableofcontents{\section*{Contents} %\@mkboth{Contents}{Contents}} %Journal
 \@starttoc{toc}}
\def\@biblabel#1{#1.}
\let\Thebibliography=\thebibliography
\renewcommand{\thebibliography}[1]{\def\@mkboth##1##2{}\Thebibliography{#1}
\addcontentsline{toc}{section}{References}
\frenchspacing % Maybe not needed
% Deleting extra vertical space
\setlength{\@topsep}{0pt}% Delete if extra space before list
\setlength{\itemsep}{0pt}%
\setlength{\parskip}{0pt plus 2pt}%
}
\def\mdots@{\mathinner.\nonscript\!.%
 \ifx\next,.\else\ifx\next;.\else\ifx\next..\else
 \nonscript\!\mathinner.\fi\fi\fi}
\let\ldots\mdots@
\let\Enumerate=\enumerate
\renewcommand{\enumerate}{\Enumerate%
% Deleting extra vertical space
\setlength{\@topsep}{0pt}% Delete if extra space before list
\setlength{\itemsep}{0pt}%
\setlength{\parskip}{0pt plus 1pt}%
\renewcommand{\theenumi}{\textup{(\alph{enumi})}}%
\renewcommand{\labelenumi}{\theenumi}%
}
\let\endEnumerate=\endenumerate
\renewcommand{\endenumerate}{\endEnumerate\unskip}
\def\@seccntformat#1{\csname the#1\endcsname.\quad}
\newcommand{\authortitle}[2]{\author{#1}\title{#2}\markboth{#1}{#2}}
\newcommand{\auth}[2]{{#1, #2.}}
\newcommand{\art}[6]{{\sc #1, \rm #2, \it #3\/ \bf #4 \rm (#5), \mbox{#6}.}}
\newcommand{\artprep}[3]{{\sc #1, \rm #2, \rm #3.}}
\newcommand{\artin}[3]{{\sc #1, \rm #2,  in #3.}}
\newcommand{\book}[3]{{\sc #1, \it #2, \rm #3.}}
\newcommand{\AND}{{\rm and }}
\def\cprime{{\mathsurround0pt$'$}}
\newtheoremstyle{descriptive}%
  {\topsep}   %{\medskipamount}          % Space above
  {\topsep}   %  {\medskipamount}         % Space below
  {\rmfamily} % Body font
  {}          % Indent
  {\bfseries} % Head font
  {.}         % Punctuation after thm head
  { }         % Space after thm head
  {}          % Thm head spec(?)
\newtheoremstyle{propositional}%
  {\topsep}   %  {\medskipamount}          % Space above
  {\topsep}   %  {\medskipamount}          % Space below
  {\itshape}  % Body font
  {}          % Indent
  {\bfseries} % Head font
  {.}         % Punctuation after thm head
  { }         % Space after thm head
  {}          % Thm head spec(?)
\theoremstyle{propositional}
\newtheorem{thm}{Theorem}[section]
\newtheorem{prop}[thm]{Proposition}
\newtheorem{lem}[thm]{Lemma}
\newtheorem{cor}[thm]{Corollary}
\theoremstyle{descriptive}
\newtheorem{deff}[thm]{Definition}
\newtheorem{example}[thm]{Example}
\newtheorem{remark}[thm]{Remark}
\renewenvironment{proof}[1][\proofname]{\par
  \pushQED{\qed}%
  \normalfont
%\topsep6\p@\@plus6\p@\relax % Removed by Anders Bj\"orn
  \trivlist
  \item[\hskip\labelsep
        \itshape
    #1\@addpunct{.}]\ignorespaces
}{%
  \popQED\endtrivlist\@endpefalse
}
\newcommand{\setm}{\setminus}
\renewcommand{\emptyset}{\varnothing}
\def\vint_#1{\mathchoice%
          {\mathop{\kern 0.2em\vrule width 0.6em height 0.69678ex depth -0.58065ex
                  \kern -0.8em \intop}\nolimits_{\kern -0.4em#1}}%
          {\mathop{\kern 0.1em\vrule width 0.5em height 0.69678ex depth -0.60387ex
                  \kern -0.6em \intop}\nolimits_{#1}}%
          {\mathop{\kern 0.1em\vrule width 0.5em height 0.69678ex depth -0.60387ex
                  \kern -0.6em \intop}\nolimits_{#1}}%
          {\mathop{\kern 0.1em\vrule width 0.5em height 0.69678ex depth -0.60387ex
                  \kern -0.6em \intop}\nolimits_{#1}}}
\newcommand{\longemb}{\lhook\joinrel\longrightarrow}
\newcommand{\embed}{\hookrightarrow}
\newcommand{\Cp}{{C_p}}
\DeclareMathOperator{\diam}{diam}
\DeclareMathOperator{\supp}{supp}
\DeclareMathOperator{\dist}{dist}
\DeclareMathOperator*{\esssup}{ess\,sup}
\newcommand{\bdry}{\partial}
\newcommand{\bdy}{\bdry}
\newcommand{\grad}{\nabla}
\newcommand{\loc}{_{\rm loc}}
{\catcode`p =12 \catcode`t =12 \gdef\eeaa#1pt{#1}}      % Get slantfactor
\def\accentadjtext#1{\setbox0\hbox{$#1$}\kern   % Convert it with height
                \expandafter\eeaa\the\fontdimen1\textfont1 \ht0 }
\def\accentadjscript#1{\setbox0\hbox{$#1$}\kern % Convert it with height
                \expandafter\eeaa\the\fontdimen1\scriptfont1 \ht0 }
\def\accentadjscriptscript#1{\setbox0\hbox{$#1$}\kern   % Convert it with height
                \expandafter\eeaa\the\fontdimen1\scriptscriptfont1 \ht0 }
\def\accentadjtextback#1{\setbox0\hbox{$#1$}\kern       % Convert it with height
                -\expandafter\eeaa\the\fontdimen1\textfont1 \ht0 }
\def\accentadjscriptback#1{\setbox0\hbox{$#1$}\kern     % Convert it with height
                -\expandafter\eeaa\the\fontdimen1\scriptfont1 \ht0 }
\def\accentadjscriptscriptback#1{\setbox0\hbox{$#1$}\kern % Convert it with height
                -\expandafter\eeaa\the\fontdimen1\scriptscriptfont1 \ht0 }
\def\itoverline#1{{\mathsurround0pt\mathchoice
        {\rlap{$\accentadjtext{\displaystyle #1}
                \accentadjtext{\vrule height1.593pt}
                \overline{\phantom{\displaystyle #1}
                \accentadjtextback{\displaystyle #1}}$}{#1}}
        {\rlap{$\accentadjtext{\textstyle #1}
                \accentadjtext{\vrule height1.593pt}
                \overline{\phantom{\textstyle #1}
                \accentadjtextback{\textstyle #1}}$}{#1}}
        {\rlap{$\accentadjscript{\scriptstyle #1}
                \accentadjscript{\vrule height1.593pt}
                \overline{\phantom{\scriptstyle #1}
                \accentadjscriptback{\scriptstyle #1}}$}{#1}}
        {\rlap{$\accentadjscriptscript{\scriptscriptstyle #1}
                \accentadjscriptscript{\vrule height1.593pt}
                \overline{\phantom{\scriptscriptstyle #1}
                \accentadjscriptscriptback{\scriptscriptstyle #1}}$}{#1}}}}
\newcommand{\limminus}{{\mathchoice{\raise.17ex\hbox{$\scriptstyle -$}}
                {\raise.17ex\hbox{$\scriptstyle -$}}
                {\raise.1ex\hbox{$\scriptscriptstyle -$}}
                {\scriptscriptstyle -}}}
\newcommand{\limplus}{{\mathchoice{\raise.17ex\hbox{$\scriptstyle +$}}
                {\raise.17ex\hbox{$\scriptstyle +$}}
                {\raise.1ex\hbox{$\scriptscriptstyle +$}}
                {\scriptscriptstyle +}}}
\newcommand{\Om}{\Omega}
\newcommand{\Omkj}{\Om_{k,j}}
\newcommand{\clOm}{\overline{\Om}}
\renewcommand{\phi}{\varphi}
\newcommand{\al}{\alpha}
\newcommand{\be}{\beta}
\newcommand{\eps}{\varepsilon}
\newcommand{\la}{\lambda}
\newcommand{\II}{{\cal I}}
\newcommand{\La}{\Lambda}
\newcommand{\de}{\delta}\newcommand{\ga}{\gamma}
\newcommand{\sig}{\sigma}
\newcommand{\Y}{\mathcal{Y}}
\newcommand{\p}{{$p\mspace{1mu}$}}
\newcommand{\R}{\mathbf{R}}
\newcommand{\N}{\mathbf{N}}
\newcommand{\dmu}{d\mu}
\renewcommand{\kappa}{\varkappa}
\newcommand{\Np}{N^{1,p}}
\newcommand{\Wp}{W^{1,p}}
\newcommand{\Mp}{M^{1,p}}
\newcommand{\Dp}{D^p}
\newcommand{\Ct}{{\widetilde{C}}}
\newcommand{\ub}{\bar{u}}
\newcommand{\qb}{\bar{q}}
\numberwithin{equation}{section}
\newenvironment{ack}{\medskip{\it Acknowledgement.}}{}
\newcommand{\Rn}{\mathbf{R}^n}
\begin{document}

\authortitle{Jana Bj\"orn \AND
Agnieszka Ka\l{}amajska} 
{Poincar\'e inequalities and compact embeddings
into weighted $L^q$ spaces}
\title{Poincar\'e inequalities and compact embeddings \\
from Sobolev type spaces \\
into weighted $L^q$ spaces on metric spaces}
\author{Jana Bj\"orn \\
\it\small Department of Mathematics, Link\"oping University, 
\it\small SE-581 83 Link\"oping, Sweden\/{\rm ;}\\
\it \small jana.bjorn@liu.se, ORCID\/\textup{:} 0000-0002-1238-6751
\\
\\
Agnieszka Ka\l amajska \\
\it\small Institute of Mathematics,
University of Warsaw, \\
\it\small ul.\ Banacha 2, PL-02097 Warszawa, Poland\/{\rm ;} \\
\it \small kalamajs@mimuw.edu.pl, ORCID\/\textup{:} 0000-0001-5674-8059
}

%\date{Preliminary version, \today}
\date{}
\maketitle

\noindent{\small
{\bf Abstract}.
We study compactness and boundedness of embeddings from Sobolev type
spaces on metric spaces into $L^q$ spaces with respect to another measure. 
The considered Sobolev spaces can be of fractional order and 
some statements allow also nondoubling measures.
Our results are formulated in a general form,
using sequences of   covering families 
and local Poincar\'e type inequalities.
We show how to construct such suitable coverings and Poincar\'e inequalities.
For locally doubling measures, we prove a self-improvement property for two-weighted Poincar\'e inequalities, 
which applies also to lower-dimensional measures.

We simultaneously treat various Sobolev spaces,
such as the Newtonian, fractional Haj\l asz and Poincar\'e type spaces,
for rather general measures and sets, including fractals and domains with fractal boundaries. 
By considering lower-dimensional measures on the boundaries of such domains,
we obtain trace embeddings for the above spaces.
In the case of Newtonian spaces we exactly characterize 
 when embeddings into 
$L^q$ spaces with respect to another measure are compact.
Our tools are illustrated by concrete examples.
For measures satisfying suitable dimension conditions, 
we recover several classical embedding theorems 
on domains and fractal sets in $\R^n$.
}

\medskip
\noindent
{\small \emph{Key words and phrases}: 
compact embedding, 
doubling measure,
fractal,
Haj\l asz space,
metric space, 
Newtonian space, 
Poincar\'e inequality,
Poincar\'e space,
trace embedding. 
}

\smallskip
\noindent
{\small Mathematics Subject Classification (2020):
Primary: 
46E35; %Sobolev spaces and other spaces of smooth functions, embedding theorems, trace theorems
Secondary: 
26A33; %Fractional derivatives and integrals
46B50; %Compactness in Banach (or normed) spaces
46E30; %Spaces of measurable functions (L^p, Orlicz, Lorenz, r.i. ...)
46E36. %Sobolev spaces (and similar kinds of) spaces of functions on metric spaces, analysis on metric spaces
}

\smallskip
\noindent
{\small Declarations of interest: None.}

\section{Introduction}

The classical Rellich--Kondrachov compactness theorem says that if
$\Om\subset\R^n$ is a bounded domain with a sufficiently smooth boundary
and $1\le p<n$, then the embedding
$W^{1,p}(\Om) \embed L^q(\Om)$ is compact for all $1\le q<np/(n-p)$,
see e.g.\ Ziemer~\cite[Theorem~2.5.1 and Exercise~2.3]{Ziem}.
For $p\ge n$ this holds for all $q\ge1$.

In Haj\l asz--Koskela~\cite[Section~8]{HaKobook}, similar compactness results were
proved in the setting of metric measure spaces 
which in most typical situations support global Poincar\'e inequalities
and are either geometrically
doubling or equipped with doubling measures.

In this paper we study compactness and boundedness 
of embeddings from subsets $\Y$ of Sobolev type  spaces, 
defined on a metric measure space $X=(X,d,\mu)$, into $L^q$ spaces 
on a measurable totally bounded
set $E\subset X$ and with respect to possibly another measure~$\nu$.
Our goal is  to formulate and prove the  results under least possible
assumptions, which include various earlier results as special cases. 
We are able to simultaneously treat various spaces of Sobolev 
type, including the Newtonian $\Np$, fractional Haj\l asz $M^{\al,p}$
  and Poincar\'e $P^{\al,p}_\tau$ spaces,
for rather general measures and in metric spaces, including fractals.
Our results can be extended to Besov spaces, but we omit such treatment here.

Our main idea for compactness, formulated in an abstract form in
Assumptions~\ref{ass-comp} and Theorem~\ref{thm-precompact},
is to construct countably many 
local Poincar\'e inequalities involving the measures $\mu$ and $\nu$,
and satisfied on certain covering sets $E_i= E_i(m)$ 
of $E$ and suitable $E_i^{'}=E_i^{'}(m)\subset X$, for each $m=1,2,\ldots$:
 \begin{equation}  \label{eq-intro-PI-gen}
   \biggl(\int_{E_i} |u-{a_{E_i}(u)}|^q \,d \nu \biggr)^{1/q}
        \le C_m  \biggl( \int_{E'_i} g^p \,\dmu \biggr)^{1/p},
\end{equation}
whenever $u:X\to[-\infty,\infty]$ belongs to the appropriate function
space.
As a byproduct, we also obtain a sufficient 
condition for the boundedness  of  the above embeddings,
in which case it suffices to have one covering net, see
Assumptions~\ref{ass-bdd} and Theorem~\ref{thm-bounded}.
We consider mainly the case when $1\le p\le q<\infty$.

The generalized ``gradient'' $g$ in \eqref{eq-intro-PI-gen}
serves as a substitute for $|\grad u|$ 
in the classical Poincar\'e inequalities in the Euclidean spaces and
on  manifolds.
For a fixed $m$, the covering sets $E_i$ and $E_i^{'}$ 
form a finite covering 
family of the sets for which the embedding is considered and, 
intuitively, shrink as $m\to \infty$. 
They will often be constructed from balls with radii $r_m\to0$, but
other choices are possible.

The most important property of  such a covering is that we can  control 
simultaneously the constants $C_m$ in the Poincar\'e  
inequalities~\eqref{eq-intro-PI-gen}, as well as the overlap $N_m$ 
of the covering family $\{E'_i\}$ associated with each $m$. 
This is enforced by the requirement that $C_mN_m^{1/p}\to 0$ as $m\to\infty$. 
A concrete application of this approach is
in Theorem~\ref{thm-w-al-v-be}, where we use integrability
assumptions on the weights $w$ and $v$ in $\R^n$ to obtain an explicit
sufficient condition for the compactness of the embedding 
\[
W^{1,p}(\R^n,w\,dx)\embed L^q(E,v\,dx)
\] 
 for bounded measurable sets $E\subset \R^n$.
Its sharpness for the limiting exponent is illustrated in Example~\ref{ex-optimal-emb}, where
the embedding $W^{1,2}(\R^2,w\,dx)\embed L^2(B(0,1),v\,dx)$ is
compact, while there is no bounded embedding into $L^q(B(0,1),v\,dx)$
for any $q>2$.

Poincar\'e inequalities with suitable exponents $p$ and $q$ are the main ingredient 
in our method. 
In Theorem~\ref{thm-two-weight-PI-eps} we therefore prove the two-weighted
Poincar\'e type inequality \eqref{eq-intro-PI-gen} in spaces with a
good \emph{domain measure} $\mu$ and a rather general \emph{target measure} $\nu$. 
An explicit special case is the following result, proved in 
Section~\ref{sect-derive-PI}.
It applies for example in the setting of weighted $\R^n$, as in 
Heinonen--Kilpel\"ainen--Martio~\cite{HeKiMa},
as well as on many metric measure spaces,
together with a
possibly ``lower-dimensional'' measure $\nu$ defined  on good subsets
 as in \eqref{eq-d-set}, including fractals.
This interpretation seems to be new even in unweighted $\R^n$
and is one of our main contributions.

When $\al=1$ and $\mu=\nu$ satisfies the lower
bound~\eqref{eq-dim-mu-only-s}, we partially recover Theorem~5.1
in Haj\l asz--Koskela~\cite{HaKobook} and Theorem~6   
in Alvarado--G\'orka--Haj\l asz~\cite{AlGoHaj}.
Self-improve\-ments of Poincar\'e inequalities with two measures were 
earlier obtained in e.g.\ 
Chanillo--Wheeden~\cite{ChanWhe85} and Bj\"orn~\cite{FennAnn}
in the setting of $A_p$ weights in~$\R^n$.

\begin{thm}[Self-improvement of Poincar\'e inequalities on subsets]
\label{thm-intro-PI}
Assume that $1\le p<q<\infty$, that $\mu$ and $\nu$ satisfy the doubling condition 
for all balls $B\subset X$ centred 
in a uniformly perfect set $E\subset X$, and that $\mu$ supports the \p-Poincar\'e 
inequality 
\begin{equation}   
\label{eq-def-abstr-PI}
\vint_B |u-u_{B,\mu}| \,d\mu
        \le C[\diam(B)]^\al \biggl( \vint_{\la B} g^p \,d\mu
        \biggr)^{1/p},
\end{equation}
with $\al>0$ and dilation $\la\ge1$,  
for all such balls and a pair of functions $(u,g)$,
where $u$ is assumed to have $\mu$-Lebesgue points $\nu$-a.e.\ in $E$.

Then the Poincar\'e type inequality \eqref{eq-intro-PI-gen} holds for 
the pair $(u,g)$ and all balls 
$B=B(x,r)\subset X$ centred in $E$, with $E_i=E\cap B$, $E'_i=2\la B$, 
$a_{E_i}(u)=\mu(B)^{-1}\int_B u\,d\mu$, 
the exponent $q$ in the left-hand side of \eqref{eq-intro-PI-gen}
replaced by any $q'<q$ and the Poincar\'e constant $C_m$ replaced by 
\begin{equation}  \label{eq-PI-w-Theta-intro}
C(r)= C' \nu(E\cap B)^{1/q'-1/q} \sup_{0<\rho\le r}
\sup_{x\in E\cap B} \frac{\rho^\al \nu(B(x,\rho))^{1/q}}{\mu(B(x,\la \rho))^{1/p}}.
\end{equation}
The constant $C'$ in \eqref{eq-PI-w-Theta-intro} depends only on
  $C$ in \eqref{eq-def-abstr-PI} and on other
  fixed parameters {\rm(}including $q'${\rm)}, but not on $B$, $u$ and $g$.
If the pair $(u,g)$ satisfies the truncation property, then also $q'=q$ is allowed.
\end{thm}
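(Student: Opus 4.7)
My approach is the classical chaining/telescoping argument along dyadic balls, producing a pointwise ``Riesz potential'' bound, which I then convert into a two-weighted fractional maximal function estimate controlled by a weak-type bound together with Kolmogorov's inequality. The truncation method handles the endpoint case $q'=q$.

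For a $\mu$-Lebesgue point $y\in E\cap B$ of $u$ (such points fill $E\cap B$ up to a $\nu$-null set by hypothesis), I telescope $u(y)-u_{B,\mu}$ along the dyadic balls $B_k=B(y,2^{-k}r)$, $k\ge 0$, linked to $B$ via the enclosing ball $2B=B(x_0,2r)$. Doubling of $\mu$ on balls centred in $E$ and the assumed \p-Poincar\'e inequality \eqref{eq-def-abstr-PI} applied on each $B_k$ and on $2B$ give the pointwise bound
\begin{equation*}
|u(y)-u_{B,\mu}| \le C\sum_{k=0}^{\infty} r_k^{\al}\biggl(\vint_{\la B_k} g^p\,d\mu\biggr)^{1/p},
\end{equation*}
where $r_k=2^{-k}r$ and the initial linking term has been absorbed into the $k=0$ summand at the cost of a constant depending only on $\la$ and the doubling data.

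By the definition of the supremum $\Theta(r)$ in \eqref{eq-PI-w-Theta-intro}, each factor satisfies $r_k^{\al} \le \Theta(r)\,\mu(\la B_k)^{1/p}\nu(B_k)^{-1/q}$ whenever $y\in E\cap B$ and $r_k\le r$; substituting,
\begin{equation*}
|u(y)-u_{B,\mu}| \le C\Theta(r) \sum_{k=0}^{\infty} \nu(B_k)^{-1/q}\|g\|_{L^p(\la B_k,\mu)}.
\end{equation*}
Uniform perfectness of $E$ together with doubling of $\nu$ on balls centred in $E$ yields a quantitative reverse doubling $\nu(B_k)\ge c\sig^{k}\nu(B(y,r))$ with some $\sig>1$, so the geometric decay lets me dominate this sum by a constant times the centred two-weighted fractional maximal function
\begin{equation*}
\mathcal{N} g(y) := \sup_{0<\rho\le r} \nu(B(y,\rho))^{-1/q}\|g\|_{L^p(\la B(y,\rho),\mu)}.
\end{equation*}
A Vitali covering argument, using only the doubling of $\nu$ on $E$, shows that $\mathcal{N}$ maps $L^p(2\la B,\mu)$ into the weak space $L^{q,\infty}(E\cap B,\nu)$ with constant independent of $B$. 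Kolmogorov's inequality on $E\cap B$, which has finite $\nu$-measure, converts this weak bound into a strong $L^{q'}(E\cap B,\nu)$ estimate for every $q'<q$ and contributes exactly the factor $\nu(E\cap B)^{1/q'-1/q}$ appearing in \eqref{eq-PI-w-Theta-intro}. Combining the three steps proves the theorem when $q'<q$.

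For the endpoint $q'=q$ under the truncation hypothesis, I apply the $q'<q$ estimate to the truncated pairs $(u_t^s,g\chi_{\{s<u<t\}})$, where $u_t^s=\min(t,\max(s,u))$, on the dyadic level sets $\{2^j<|u-u_{B,\mu}|\le 2^{j+1}\}$, and sum in the style of Maz\cprime ya and Haj\l asz--Koskela; the geometric gain $2^{j(q-q')}$ makes the resulting series converge and upgrades the bound to strong $L^q(\nu)$. The main technical obstacle will be the two-weighted weak-type bound for $\mathcal{N}$: since $\mu$ and $\nu$ may live on sets of very different Hausdorff dimension, the supremum $\Theta(r)$ has to absorb this mismatch cleanly in the Vitali argument, without invoking any explicit dimension condition beyond doubling and uniform perfectness.
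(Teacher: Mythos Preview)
Your outline tracks the paper's route (telescoping, a two-weighted maximal function, a Vitali weak-type bound, Kolmogorov, then Maz\cprime ya truncation), but there is a genuine gap at the step where you pass from the chain sum to $\mathcal{N}g$. After inserting $\Theta(r)$ you have
\[
|u(y)-u_{B,\mu}|\le C\,\Theta(r)\sum_{k\ge0}\nu(B_k)^{-1/q}\,\|g\|_{L^p(\lambda B_k,\mu)},
\]
and you claim that reverse doubling $\nu(B_k)\ge c\sigma^{k}\nu(B(y,r))$ with $\sigma>1$ lets you dominate the sum by a constant multiple of $\mathcal{N}g(y)$. The displayed inequality is impossible for shrinking balls (it would force $\nu(B_k)\to\infty$); what uniform perfectness plus doubling actually give is the \emph{upper} bound $\nu(B_k)\le C\,2^{-k\delta}\nu(B(y,r))$. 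That makes $\nu(B_k)^{-1/q}$ \emph{grow} geometrically in $k$, so the terms of your series need not decay and cannot be summed against their supremum. For instance, with $g=\chi_{B(y,2^{-k_0}r)}$ the terms for $k\le k_0$ equal $\nu(B_k)^{-1/q}\mu(B(y,2^{-k_0}r))^{1/p}$ and strictly increase in $k$; the sum is of order $k_0$ times the largest term, not comparable to $\mathcal{N}g(y)$.

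The paper's proof of the underlying Theorem~\ref{thm-two-weight-PI-eps} resolves exactly this difficulty by a Hedberg-type optimization: split the chain sum at an index $j_0$, use the reverse-doubling upper bound to sum $\nu(B_k)^{-1/q}$ for $k<j_0$ and to sum $\nu(B_k)^{1/p-1/q}$ for $k\ge j_0$ (here $q>p$ is essential), introduce instead the maximal function $M(y)=\sup_{B'}\nu(B')^{-1}\int_{\lambda B'}g^p\,d\mu$ with exponent $-1$ rather than $-1/q$, and then choose $j_0$ so that $\nu(B_{j_0})\sim M(y)^{-1}\int_{2\lambda B}g^p\,d\mu$. This yields
\[
|u(y)-u_{B,\mu}|\le C\,\Theta(r)\Bigl(\int_{2\lambda B}g^p\,d\mu\Bigr)^{1/p-1/q}M(y)^{1/q},
\]
after which Vitali gives the weak $(1,1)$ bound for $M$ and hence a weak $L^q$ estimate for $u-u_{B,\mu}$; Cavalieri then produces the strong $L^{q'}$ bound with the factor $\nu(E\cap B)^{1/q'-1/q}$. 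For the endpoint, the truncation method is applied to this \emph{weak} $L^q$ estimate, not to the strong $L^{q'}$ inequality as you propose; summing strong $L^{q'}$ bounds over dyadic level sets does not produce the claimed geometric gain $2^{j(q-q')}$ without first controlling the truncation means, which is precisely what the weak-type route avoids.
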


The ``gradient'' $g$ is allowed to be both  nonlinear and nonlocal, for example:
\begin{enumerate}
\item[$\bullet$]
The Haj\l asz gradient coming from the pointwise inequality
\begin{equation}     \label{eq-Haj-grad-intro}
|u(x)-u(y)| \le d(x,y)^\al (g(x)+g(y)) 
\quad \text{for $\mu$-a.e.\ } x,y\in X,
\end{equation}
and defining the fractional Haj\l asz Sobolev space
$M^{\al,p}(X,\mu)$ in Section~\ref{sect-frac-Sob-nonlocal}.
\item[$\bullet$]
The upper gradient on metric spaces,  defined by curve integrals 
\begin{equation*} 
        |u(x) - u(y)| \le \int_{\gamma} g\,ds 
\quad \text{for all $x,y\in X$ and all curves $\ga$ connecting them,}
\end{equation*}
and defining the Newtonian Sobolev space $\Np(X,\mu)$ in Section~\ref{sect-Newt}.
\end{enumerate}
Our next result holds for fractional Haj\l asz spaces with very mild
assumptions on the measure $\mu$ and
generalizes Theorem~2 in Ka\l amajska~\cite{agnieszka},
which dealt with $\alpha =1$.
It is a direct consequence of Proposition~\ref{prop-q=p-M-al}
with $\theta=0$, where
the doubling assumption on $X$ is partially relaxed. 
Note that  $\mu$ need not be doubling.

\begin{prop}[Compactness of $M^{\al,p}$ in $L^p$ with general $\mu$]
\label{prop-M-to-Lp-comp-intro}
Assume that $X$ is 
bounded and doubling as in Definition~\ref{def-doubling}, 
and that $0<\mu(B)<\infty$ for all balls $B$ in $X$.
Then the embedding $M^{\al,p}(X,\mu)\embed L^p(X,\mu)$ is compact for
all $p\ge1$ and $\al>0$.
In particular, if $\mu$ is not a finite sum of atoms, then $M^{\al,p}(X,\mu)\ne L^p(X,\mu)$.
\end{prop}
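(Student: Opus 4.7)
The plan is to invoke the abstract compactness criterion in Theorem~\ref{thm-precompact}, with covering families made of small balls coming from geometric doubling, and with local Poincar\'e inequalities obtained directly from the pointwise Hajlasz definition.

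First, I derive a ball-wise Poincar\'e inequality from \eqref{eq-Haj-grad-intro}. For any admissible pair $(u,g)$ and any ball $B=B(x_0,r)\subset X$, averaging the pointwise bound over $y\in B$ (using $d(x,y)\le 2r$ and the fact that $0<\mu(B)<\infty$, so the averages $u_B$ and $g_B$ are well defined) gives $|u(x)-u_B|\le (2r)^\al(g(x)+g_B)$ for $\mu$-a.e.\ $x\in B$. Raising to the $p$-th power, integrating over $B$, and applying Jensen's inequality to bound $g_B^p\mu(B)\le\int_B g^p\,d\mu$ yield
\begin{equation*}
\left(\int_B|u-u_B|^p\,d\mu\right)^{1/p} \le C\,r^\al\left(\int_B g^p\,d\mu\right)^{1/p},
\end{equation*}
which is exactly an instance of \eqref{eq-intro-PI-gen} with $q=p$, $E_i=E'_i=B$, $a_{E_i}(u)=u_B$ and Poincar\'e constant of order $r^\al$. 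Notably, no doubling of $\mu$ is used.

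Next, for each $m\in\N$ I set $r_m=1/m$. Since $X$ is bounded and geometrically doubling, a standard maximal-packing/Vitali argument produces a finite cover $\{B_i^{(m)}\}_{i=1}^{K_m}$ of $X$ by balls of radius $r_m$ whose overlap is bounded by a constant $N_0$ depending only on the doubling constant of $X$. This supplies the sequence of covering families demanded by Assumption~\ref{ass-comp}, with uniform Poincar\'e constant $C_m\simle m^{-\al}$ and overlap $N_m\le N_0$, whence $C_mN_m^{1/p}\to 0$. Theorem~\ref{thm-precompact} then delivers compactness of the embedding $M^{\al,p}(X,\mu)\embed L^p(X,\mu)$.

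For the ``in particular'' clause, suppose that $M^{\al,p}(X,\mu)=L^p(X,\mu)$ as sets. The inclusion is automatically continuous because the $M^{\al,p}$ norm dominates the $L^p$ norm by definition; combined with the assumed surjectivity, the open mapping theorem then makes the two norms equivalent. The just-proved compactness of the inclusion would therefore make the identity map on $L^p(X,\mu)$ compact, and F.~Riesz's theorem would force $L^p(X,\mu)$ to be finite-dimensional, i.e.\ $\mu$ would be a finite sum of atoms, a contradiction. The main obstacle in this plan is conceptual rather than computational: because $\mu$ is not assumed doubling, the usual averaging-between-nested-balls toolkit is unavailable. The Hajlasz axiom \eqref{eq-Haj-grad-intro} circumvents this because it is a pointwise condition and so yields a Poincar\'e inequality on each ball in isolation, while the controlled covering needed by the abstract theorem depends only on geometric doubling of $X$, not of $\mu$.
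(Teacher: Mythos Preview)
Your proof is correct and follows essentially the same route as the paper: the paper derives the $(p,p)$-Poincar\'e inequality on balls directly from the Haj\l asz pointwise estimate (this is \eqref{eq-p,p-PI-M}), builds finite coverings with uniformly bounded overlap from geometric doubling via Remark~\ref{rem-Hausdorff-max-princ} and Lemma~\ref{lem-Hausdorff-max-princ} (which is exactly your maximal-packing step, with $\theta=0$), and then applies Theorem~\ref{thm-precompact} with $q=p$. For the ``in particular'' clause, the paper also reduces to finite-dimensionality of $L^p$ via the Riesz lemma; your explicit use of the open mapping theorem to pass from set equality to norm equivalence is a point the paper leaves implicit.
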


Two-weighted compactness results for $M^{\al,p}$ and the more general
$E$-restricted Poincar\'e Sobolev spaces $P^{\al,p}_{\tau,E}$, introduced in Definition~\ref{deff-al-poinc-spa},
are provided in Theorem~\ref{thm-comp-for-q'<q-gen} and
Corollary~\ref{cor-comp-dim}. The following
\emph{local doubling and dimension conditions} for $\mu$ and $\nu$
on $E\subset X$ play a major role in these results.
\begin{enumerate}
\renewcommand{\theenumi}{\textup{\bf (\Alph{enumi})}}%
\renewcommand{\labelenumi}{\theenumi}
\setcounter{enumi}{3}
\item
\label{ass-doubl} 
For all $x\in E$  and all $0<r'<r\le r_0$, with $C,C', C'',\de, s,\sig>0$ 
independent of $x$, $r'$ and $r$\/{\rm:}
\end{enumerate}
\vskip -2\medskipamount
\begin{align}
& \setbox0\hbox{$\displaystyle 0<\mu(B(x,2r))$}
\hskip-\wd0
\begin{array}{@{}l@{}}
0<\mu(B(x,2r))\le C\mu(B(x,r)) <\infty, \\
0<\nu(B(x,2r))\le C\nu(B(x,r)) <\infty,
\end{array}  \label{eq-def-doubl-on-E} \\
\frac{\nu({B(x,r')})}{\nu(B(x,r))}
&\le C \Bigl( \frac{r'}{r} \Bigr)^{\de}, \label{eq-dim-est-sig-E}  \\
\mu(B(x,r)) &\ge C' r^s,   \label{eq-dim-mu-only-s}  \\
\nu(B(x,r)) &\le C'' r^\sig.   \label{eq-dim-nu-only-sig} 
\end{align}

In particular, Corollary~\ref{cor-comp-dim} with $E=X$ recovers 
the well-known compactness of the embedding
$M^{\al,p}(X,\mu)\embed L^q(X,\mu)$ for  $q(s-\al p)<sp$,
and generalizes it to two  measures, including lower-dimensional target measures $\nu$. 
When discussing embeddings into $L^q(E,\nu)$, we mean
that $\ub\in L^q(E,\nu)$, where
\begin{equation}  \label{eq-Leb-repr}
\ub(x):= \limsup_{r\to0} \vint_{B(x,r)} u\,d\mu, \quad x\in E
\end{equation}
and the integral average $\vint_{}$ is as in \eqref{eq-average}.

For the Newtonian spaces $\Np(X,\mu)$, defined by
upper gradients, a standard assumption is the following
\p-\emph{Poincar\'e inequality} for  all (or some) balls $B$ and  
all $u\in L^1\loc(X,\mu)$ 
with a minimal \p-weak upper gradient $g_u\in L^p(X,\mu)$,
\begin{equation} \label{PI-ineq}
   \vint_{B} |u-u_{B,\mu}| \,\dmu
        \le C \diam (B) \biggl( \vint_{\lambda B} g_u^p \,\dmu \biggr)^{1/p},
\end{equation}
where  $C>0$ and $\lambda \ge 1$ are independent of  $u$ and $B$, and 
we implicitly assume that $0<\mu(B)<\infty$ for all such balls $B$.
We prove the following characterization of compact embeddings for
$\Np$, which seems new in the context of metric spaces.
It is a direct consequence of
Theorem~\ref{thm-comp-for-q'<q-gen}\,\ref{it-Th-trunc}, together with
Propositions~\ref{prop-comp-imp-M-to0} and~\ref{prop-Th<infty-necess},
and generalizes  a similar condition from $\R^n$ 
due to Maz\cprime ya~\cite[Theorem~8.8.3]{Mazya}.
For the definition of $\Np$ and $\Dp$, see Section~\ref{sect-Newt}.

\begin{thm}[Characterization of embeddings for $\Np$]   
\label{thm-Mazya-cond-intro-new}
Assume that $E\subset X$ is totally bounded and
that for all balls centred in $E$ and of radius at most~$r_0${\rm:}
\begin{enumerate}
\item The doubling conditions~\eqref{eq-def-doubl-on-E} and
  \eqref{eq-dim-est-sig-E} in Assumptions~\ref{ass-doubl} hold for $\mu$ and~$\nu$.
\item The domain measure $\mu$ supports the Poincar\'e inequality~\eqref{PI-ineq} with
  $g_u$.  
\end{enumerate}
Let $\Y = \Np(X,\mu)$ or $\Y = \Dp(X,\mu)\cap L^1(E,\nu)$, 
equipped with the norms
\begin{equation*}   
\|u\|_{\Np(X)} \quad \text{and} \quad
\|g_u\|_{L^p(X,\mu)} + \|u\|_{L^1(E,\nu)}, \quad \text{respectively.}
\end{equation*}
Then the embedding $\Y \embed L^{q}(E,\nu)$ for $q>p$ is
bounded whenever 
\begin{equation}     \label{eq-bdd-cond-intro}
\sup_{0<r<r_0} \sup_{x\in E} \frac{r \nu(B(x,r))^{1/q}}{\mu(B(x,r))^{1/p}} < \infty,
\end{equation}
and compact whenever 
\begin{equation}     \label{eq-Mazya-cond-intro-new}
\sup_{x\in E} \frac{r \nu(B(x,r))^{1/q}}{\mu(B(x,r))^{1/p}} \to 0,
\quad \text{as } r\to0.
\end{equation}
If $\nu$ also satisfies the measure density condition 
\begin{equation}  \label{eq-meas-density}
\nu(B(x,r)\cap E)\ge c\nu(B(x,r)) \quad \text{with some $c>0$,}
\end{equation}
for all $x\in E$ and $0<r\le r_0$ {\rm(}in particular, if
$\nu(X\setm E)=0${\rm)}, then \eqref{eq-bdd-cond-intro}
and~\eqref{eq-Mazya-cond-intro-new} are also necessary for the
boundedness{\rm/}compactness of $\Y \embed L^{q}(E,\nu)$, respectively.
\end{thm}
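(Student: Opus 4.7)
The plan is to deduce both the sufficient and necessary parts from results already in the paper: the two-weighted self-improving Poincaré inequality of Theorem~\ref{thm-intro-PI} together with the abstract compactness/boundedness principles of Theorems~\ref{thm-precompact} and~\ref{thm-bounded}, and, on the other side, Propositions~\ref{prop-comp-imp-M-to0} and~\ref{prop-Th<infty-necess}. The main work is to verify Assumptions~\ref{ass-comp} and~\ref{ass-bdd} on a convenient sequence of coverings of $E$.

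For sufficiency, I would first recall that the minimal $p$-weak upper gradient $g_u$ enjoys the truncation property, so Theorem~\ref{thm-intro-PI} applies with exponent $q'=q$ and $\alpha=1$. Applied to any ball $B=B(x_0,r)$ with $x_0\in E$ and $r\le r_0$, it yields
\begin{equation*}
\biggl(\int_{E\cap B}|u-u_{B,\mu}|^{q}\,d\nu\biggr)^{1/q}
\le C(r)\,\biggl(\int_{2\lambda B}g_u^{p}\,d\mu\biggr)^{1/p},
\end{equation*}
where, after the factor $\nu(E\cap B)^{1/q'-1/q}$ in \eqref{eq-PI-w-Theta-intro} collapses to $1$, the constant $C(r)$ is controlled by
\begin{equation*}
C'\sup_{0<\rho\le r}\sup_{x\in E}\frac{\rho\,\nu(B(x,\rho))^{1/q}}{\mu(B(x,\lambda\rho))^{1/p}}.
\end{equation*}
By the doubling of $\mu$ on $E$, the denominator is comparable to $\mu(B(x,\rho))^{1/p}$, so $C(r)$ is controlled by the quantity appearing in \eqref{eq-Mazya-cond-intro-new}. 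Next I would exploit the total boundedness of $E$ together with the local doubling of $\mu$ and $\nu$ on $E$ to construct, for each scale $r_m\to 0$, a finite cover of $E$ by balls $B_i=B(x_i,r_m)$ with $x_i\in E$ whose $2\lambda$-dilates have overlap bounded by a constant $N$ independent of $m$; this is the standard Vitali/$5B$-covering argument, with the overlap bound following from the local doubling condition~\eqref{eq-def-doubl-on-E} applied finitely many times. Setting $E_i=E\cap B_i$ and $E'_i=2\lambda B_i$, Assumptions~\ref{ass-comp} are fulfilled with $C_m=C(r_m)$ and $N_m\le N$, and the hypothesis~\eqref{eq-Mazya-cond-intro-new} gives precisely $C_m N_m^{1/p}\to 0$. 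Theorem~\ref{thm-precompact} then delivers the compactness. If only~\eqref{eq-bdd-cond-intro} is assumed, the same reasoning at a single scale and an appeal to Theorem~\ref{thm-bounded} yields boundedness. The construction works uniformly for both choices of $\Y$ because the right-hand side only sees $g_u$ and $\mu$, while the left-hand side is controlled by the $L^1(E,\nu)$-seminorm that is part of the norm on $\Dp\cap L^1(E,\nu)$.

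For necessity, under the measure density assumption~\eqref{eq-meas-density}, I would apply Proposition~\ref{prop-Th<infty-necess} to promote boundedness of $\Y\embed L^q(E,\nu)$ to finiteness of the supremum in~\eqref{eq-bdd-cond-intro}, and Proposition~\ref{prop-comp-imp-M-to0} to promote compactness to the decay~\eqref{eq-Mazya-cond-intro-new}; these propositions in turn proceed by testing the embedding against a family of Lipschitz bumps whose Newtonian norms and $L^q(E,\nu)$-seminorms are comparable to $\mu(B(x,r))^{1/p}/r$ and $\nu(B(x,r))^{1/q}$ respectively. The main obstacle on the sufficiency side is the simultaneous control of the Poincaré constant $C_m$ and the overlap $N_m$ across the family of coverings; total boundedness guarantees finite covers and local doubling of both measures keeps $N_m$ uniformly bounded, while \eqref{eq-Mazya-cond-intro-new} is exactly what is needed to force $C_m\to 0$. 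Once this is arranged, the rest amounts to bookkeeping within the abstract framework of Assumptions~\ref{ass-comp} and~\ref{ass-bdd}.
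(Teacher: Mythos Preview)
Your proposal is correct and follows essentially the same approach as the paper. The only cosmetic difference is that the paper packages the sufficiency argument into the intermediate Theorem~\ref{thm-comp-for-q'<q-gen}\,\ref{it-Th-trunc} (whose proof does exactly the covering construction and appeal to Theorems~\ref{thm-precompact}, \ref{thm-bounded} and~\ref{thm-two-weight-PI-eps} that you spell out), and then observes that the truncation property of $g_u$ and $\alpha=1$ reduce $\Theta_{q,\lambda}(r_0)<\infty$ and $\Theta_{q,\lambda}(r)\to0$ to \eqref{eq-bdd-cond-intro} and~\eqref{eq-Mazya-cond-intro-new}; the necessity part is handled identically via Propositions~\ref{prop-comp-imp-M-to0} and~\ref{prop-Th<infty-necess} together with~\eqref{eq-meas-density}.
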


When $\mu$ and $\nu$ satisfy the dimension conditions
\eqref{eq-dim-mu-only-s} and \eqref{eq-dim-nu-only-sig}, 
Theorem~\ref{thm-Mazya-cond-intro-new} yields the sufficient conditions
$q(s-p)<\sig p$ and $q(s-p)\le\sig p$ for compactness/boundedness of the embedding 
$\Np(X,\mu)\embed L^q(E,\nu)$.

Two particular settings for our results are when the target measure
$\nu$ is a lower-dimensional or codimensional measure, restricted to suitable
subsets of well-behaved metric measure spaces (Example~\ref{ex-d-Hausdorff}), and when the
domain measure $\mu$ is restricted to sufficiently regular domains in 
 such spaces (Theorem~\ref{thm-mu-al-new}). 
By combining these  two approaches, we obtain the following trace embedding
result, proved at the end of Section~\ref{sect-examples}.
These trace embeddings are similar to those in
  Mal\'y~\cite[Proposition~4.16]{MalyBesov}, proved for $\Mp$ and $\Np$ by means of Besov
  spaces. However, the assumptions on the target measure $\nu$ in
  \cite{MalyBesov} are different from ours, while our assumptions on
  the domain measure $\mu$ are somewhat weaker.
See Remark~\ref{rem-when-mu|Om} for typical situations when
  $\mu|_\Om$ satisfies these assumptions.
The obtained exponents for embeddings into $L^q$
are sharp for the dimensions associated with $\mu$ and $\nu$.
The proof shows that $M^{\al,p}(\Om,\mu)$ can be replaced by the
space $P^{\al,p}_{\tau,E}(\Om,\mu)$ from Definition~\ref{deff-al-poinc-spa}.

\begin{prop}[Trace embeddings]   \label{prop-bdry}
Let $\Om\subset X$ be open and $E\subset F\subset\clOm$ such that
the $d$-dimensional Hausdorff measure $\La_d$ 
satisfies the dimension condition 
\begin{equation}  \label{eq-d-set}
C r^d \le \La_d(F\cap B(x,r)) \le C' r^d \quad \text{for all $x\in E$
  and $0<r\le r_0$.}
\end{equation}
Assume that $E$ is totally bounded and $\La_d$-measurable and that for
all balls centred in $E$ and of radius at most~$r_0$, 
the restriction $\mu|_\Om$ satisfies the doubling and dimension conditions~\eqref{eq-def-doubl-on-E} and
\eqref{eq-dim-mu-only-s} in Assumptions~\ref{ass-doubl} with exponent $s$.

If $\al>0$ and $d>s-\al p$, then functions in $M^{\al,p}(\Om,\mu)$ 
have traces on $E$, defined by the integral average
\begin{equation}    \label{eq-extend-limsup}
u(x):= \lim_{r\to0}   \vint_{B(x,r)\cap\Om} u\,d\mu
\quad \text{for $\La_d$-a.e.}~x\in E,
\end{equation}
and the trace mapping $M^{\al,p}(\Om,\mu) \embed L^q(E,\La_d)$
is compact whenever $q(s-\al p)<dp$. 

If $d>s-p$ and the restriction $\mu|_\Om$ also supports the 
\p-Poincar\'e inequality~\eqref{PI-ineq} for $g_u$ on $\Om$, then the
trace mapping $\Np(\Om,\mu) \embed L^q(E,\La_d)$, defined by~\eqref{eq-extend-limsup},
is bounded whenever $q(s-p)\le dp$, and compact if $q(s-p)<dp$.
\end{prop}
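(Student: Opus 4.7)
The plan is to reduce the proposition to two earlier results from the paper: Corollary~\ref{cor-comp-dim} for the $M^{\al,p}$-part and Theorem~\ref{thm-Mazya-cond-intro-new} for the $\Np$-part, applied on the ambient space $\Om$ with domain measure $\mu|_\Om$ and target measure $\nu:=\La_d|_F$. First I would verify Assumptions~\ref{ass-doubl} for the pair $(\mu|_\Om,\nu)$ on $E$: the hypotheses provide \eqref{eq-def-doubl-on-E} and \eqref{eq-dim-mu-only-s} for $\mu|_\Om$ with exponent $s$, while the two-sided bound \eqref{eq-d-set} on $F$ gives, for every $x\in E$ and $0<r\le r_0$, that $\nu(B(x,r)) = \La_d(F\cap B(x,r)) \asymp r^d$. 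This immediately yields doubling of $\nu$ in \eqref{eq-def-doubl-on-E}, the upper dimension bound \eqref{eq-dim-nu-only-sig} with $\sig=d$, and the power-type decay \eqref{eq-dim-est-sig-E} with $\de=d$. Since $E\subset F$, integration of $|u|^q$ over $E$ with respect to $\La_d$ coincides with the corresponding integral with respect to $\nu$, so the target spaces $L^q(E,\La_d)$ and $L^q(E,\nu)$ agree.

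Next I would justify the trace formula \eqref{eq-extend-limsup}. For $u\in M^{\al,p}(\Om,\mu)$ with Haj\l asz gradient $g$, \eqref{eq-Haj-grad-intro} yields the oscillation bound $\vint_{B\cap\Om}|u-u_{B\cap\Om,\mu}|\,d\mu \le Cr^\al(\vint_{B\cap\Om}g^p\,d\mu)^{1/p}$ on any ball $B=B(x,r)$ centred in $E$, and an analogous estimate holds via \eqref{PI-ineq} for $u\in\Np(\Om,\mu)$. The dimension condition $d>s-\al p$ (respectively $d>s-p$), combined with $\mu(B(x,r)\cap\Om)\ge Cr^s$ and $\nu(B(x,r))\le C'r^d$, forces the non-Lebesgue set of $u$ on $E$ to have zero $\La_d$-measure: a standard covering argument shows it is contained in a set of vanishing $(s-\al p)$-Hausdorff content, which is $\La_d$-negligible precisely under the assumed dimension gap. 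Thus the limsup in \eqref{eq-Leb-repr} becomes an honest limit $\La_d$-a.e.\ on $E$, and the two trace representatives agree.

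With the trace well-defined, the boundedness and compactness claims reduce to a direct computation of the key quantity entering the cited theorems. For the Newtonian case, combining the dimension bounds gives
\[
\frac{r\,\nu(B(x,r))^{1/q}}{\mu(B(x,r)\cap\Om)^{1/p}} \le C\, r^{1+d/q-s/p},
\]
which stays bounded as $r\to 0$ iff $q(s-p)\le dp$ and tends to zero iff $q(s-p)<dp$; Theorem~\ref{thm-Mazya-cond-intro-new} then yields the stated boundedness and compactness of $\Np(\Om,\mu)\embed L^q(E,\La_d)$. An analogous count, with $r$ replaced by $r^\al$ from the Haj\l asz scale, feeds into Corollary~\ref{cor-comp-dim} and produces compactness of $M^{\al,p}(\Om,\mu)\embed L^q(E,\La_d)$ whenever $q(s-\al p)<dp$. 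The main obstacle I foresee is the trace step: establishing that the exceptional set of non-Lebesgue points on $E$ is $\La_d$-null requires coupling the Haj\l asz/Poincar\'e oscillation control with the mismatch between the $\mu$-scale $r^s$ and the $\La_d$-scale $r^d$; once this is in place, the remaining computations are routine dimension accountancy feeding into the already-established machinery.
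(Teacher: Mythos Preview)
Your approach for $M^{\al,p}$ matches the paper's: verify Assumptions~\ref{ass-doubl} with $\sig=\de=d$ from~\eqref{eq-d-set} and invoke Corollary~\ref{cor-comp-dim}\,\ref{it-dim-comp}; the Lebesgue-point step you sketch is precisely Lemma~\ref{lem-est-Leb-pts}, which the paper cites directly. One point you should make explicit is that the ambient space must be $\clOm$, not $\Om$: since $E\subset\clOm$ may lie entirely on $\bdry\Om$, you need to extend $\mu|_\Om$ by zero to $\bdry\Om$ and use $M^{\al,p}(\Om,\mu)=M^{\al,p}(\clOm,\mu|_\Om)$, which is immediate from the pointwise Haj\l asz condition and $\mu|_\Om(\bdry\Om)=0$.

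For $\Np$ there is a genuine gap. The hypothesis grants the \p-Poincar\'e inequality~\eqref{PI-ineq} only ``on $\Om$'', i.e.\ for balls contained in $\Om$, whereas both Theorem~\ref{thm-Mazya-cond-intro-new} and Corollary~\ref{cor-comp-dim} require it for balls centred in $E\subset\clOm$. Passing from the former to the latter is not automatic and is not addressed in your proposal: the paper invokes Aikawa--Shanmugalingam~\cite[Proposition~7.1]{AiSh} to conclude that $\mu|_\Om$, extended by zero, supports~\eqref{PI-ineq} on $\clOm$ as the underlying space. You also need the identification $\Np(\Om,\mu)=\Np(\clOm,\mu|_\Om)$ with the same norm and with extensions given by~\eqref{eq-extend-limsup}; this is not a tautology (Newtonian functions are defined everywhere up to capacity, and upper gradients involve curves that may approach $\bdry\Om$), and the paper cites \cite[Lemma~8.2.3]{HKST} together with \cite[Theorem~4.1 and Remark~4.2]{noncomp} for it. Without these two ingredients your oscillation estimate ``via~\eqref{PI-ineq}'' on balls centred in $E$ is unjustified, and the reduction to Theorem~\ref{thm-Mazya-cond-intro-new} (equivalently Corollary~\ref{cor-comp-dim}\,\ref{it-dim-trunc}, which is what the paper uses) does not go through.
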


The tight connection between Poincar\'e inequalities and compact embeddings was 
in the setting of metric spaces exploited in Haj\l asz--Koskela~\cite[Section~8]{HaKobook}.
Another approach, based on the pointwise inequality~\eqref{eq-Haj-grad-intro} and
suitable coverings by balls with controlled overlap
was used in Ka\l amajska~\cite{agnieszka}.
See also Ivanishko--Krotov~\cite{IvanKrotov},
  Krotov~\cite{krotov} and Romanovski\u{\i}~\cite{Romanovskii} for compact
  embeddings of other abstractly defined Sobolev spaces on metric
  spaces. 
The paper G\'orka--Kostrzewa~\cite{gorkaKos} deals with compact
embeddings for Sobolev spaces defined on metrizable groups.
Haj\l asz--Koskela~\cite{HaKo-imbed} and Haj\l asz--Liu~\cite{hajzu} 
related compactness of Sobolev embeddings to the existence of 
embeddings into better spaces of Orlicz type.

The method proposed in this paper  combines the Poincar\'e inequalities
from~\cite{HaKobook} and the well-controlled coverings from~\cite{agnieszka}.
We formulate our results in the setting of metric
measure spaces but some more general formulations are possible as well.  
Theorems~\ref{thm-precompact}
  and~\ref{thm-bounded} could equally well be formulated in  measure spaces, 
without an underlying metric, or in quasimetric spaces,  
but we omit such generalizations.

We have recently learned about the paper Chua--Rodney--Wheeden~\cite{ChuaRodWhe}, where a
similar abstract treatment, also based on covering families and
a good control of certain generalized gradients, was used to derive compact
embeddings for
degenerate Sobolev spaces 
associated with a nonnegative quadratic form in $\R^n$. 
Despite some similarities, our assumptions on the involved
sets, measures, exponents and inequalities are different
from~\cite{ChuaRodWhe}, see Remark~\ref{rem-Haj-Liu-opt}\,(c).

Our main contributions to the current literature are the following:

$\bullet$
We allow two different measures, both for the
  embeddings 
and the self-im\-pro\-ve\-ment of Poincar\'e inequalities. 
The two-weighted Theorem~\ref{thm-intro-PI}
has recently found applications  in 
\cite{hyptrace} and 
Butler~\cite{ButlerET} to Lebesgue points for Besov spaces on metric spaces.

$\bullet$
By using a lower-dimensional target measure $\nu$,
we can include also trace embeddings 
  (Proposition~\ref{prop-bdry} and Example~\ref{ex-d-Hausdorff}).
  Our approach is direct without the use of Besov spaces on the boundary. 
  The conditions imposed on the measures $\mu$ and $\nu$ are rather flexible
  and natural.

$\bullet$
Some of the measures are not required
  to be doubling but can still support Poincar\'e type inequalities
as in~\eqref{eq-intro-PI-gen} with good enough $C_m$, see Remark~\ref{rem-nondoubl-meas}.
In Theorem~\ref{thm-w-al-v-be}, we consider 
nondoubling $B_p$ weights on $\R^n$
and prove compactness up to, and including, a limiting exponent $q$.
 Proposition~\ref{prop-M-to-Lp-comp-intro} for $M^{\al,p}$ and a similar weaker 
  embedding for the Poincar\'e Sobolev space $P^{\al,p}_{\tau,E}$ 
hold for very general measures, since
  Poincar\'e type inequalities come for free in these cases.

$\bullet$
Even when the doubling condition is used, it is required only
  for small balls centred in $E$. 
This is not the same as assuming
  that the restriction of the measure to $E$ is doubling and cannot
  be treated by considering $E$ as a metric space in its own
  right, see Example~\ref{ex-Whitney-modif}. In particular for the
  trace results, this assumption is much weaker than a global doubling
  condition, see Example~\ref{eq-why-s-on-E}.
Also the Poincar\'e inequality is only required for balls centred
  in $E$, which is captured in Definition~\ref{deff-al-poinc-spa} of the
$E$-restricted Poincar\'e Sobolev space $P^{\al,p}_{\tau,E}$.

$\bullet$
All smoothness exponents $\al>0$ are allowed 
for the fractional Sobolev spaces $M^{\alpha,p}$ and $P^{\al,p}_{\tau,E}$,
as well as for Poincar\'e inequalities~\eqref{eq-def-abstr-PI}.
Most earlier results deal with $\al=1$, while
$\al<1$ can be obtained from $\al=1$ by snowflaking. 
The case of $\al>1$ is less studied and applies in particular to fractals.

$\bullet$
We formulate our sufficient conditions for compactness/boundedness 
using \eqref{eq-bdd-cond-intro} and \eqref{eq-Mazya-cond-intro-new}, rather than 
in terms of dimensions, see Theorem~\ref{thm-comp-for-q'<q-gen}. 
This gives sharper results in general. 
For Newtonian spaces $\Np$ with a good domain measure~$\mu$, it leads to the exact 
characterizion of compact embeddings 
in Theorem~\ref{thm-Mazya-cond-intro-new}.

The paper is organized as follows. 
In Section~\ref{sect-comp} we give the necessary definitions 
and prove our abstract compactness and boundedness results,
Theorems~\ref{thm-precompact} and~\ref{thm-bounded}. 
In Section~\ref{sect-discuss}
we  provide a more thorough discussion of the general assumptions in these theorems, 
such as the choice of covering families and the functionals
$a_{E_i}(u)$ in~\eqref{eq-intro-PI-gen}.
Section~\ref{sect-nondoubl}, contains compactness results with
nondoubling measures.

In Section~\ref{sect-derive-PI} we construct the
local $(q,p)$-Poincar\'e inequalities~\eqref{eq-intro-PI-gen} for locally doubling measures,
starting from the weaker \p-Poincar\'e inequalities~\eqref{eq-def-abstr-PI}.
Theorem~\ref{thm-intro-PI} and its general version,
Theorem~\ref{thm-two-weight-PI-eps}, are proved there.
The self-improving argument is based on maximal functions as in 
Haj\l asz--Koskela~\cite[Theorem~5.3]{HaKobook}
and Heinonen--Koskela~\cite[Lemma~5.15]{HeKo98}, together with
Maz\cprime ya's truncation technique from \cite{Maz}. 

The Poincar\'e inequalities derived in Section~\ref{sect-derive-PI}
are further used in Section~\ref{sect-loc-doubl-meas} to prove 
the general compactness Theorem~\ref{thm-comp-for-q'<q-gen} and Corollary~\ref{cor-comp-dim}, 
assuming the weaker inequalities
\eqref{eq-def-abstr-PI}, together with more precise information about the
measures $\mu$ and~$\nu$. 
The spaces $P^{\al,p}_{\tau,E}$ considered in these statements, 
with Poincar\'e inequalities
required only for balls centred in $E$,  
could be of independent interest.
The spaces $M^{\al,p}$ and $P^{\al,p}_\tau$ 
are included as special cases.
Section~\ref{sect-Newt} deals with Newtonian spaces $\Np$ and the
sufficiency part of Theorem~\ref{thm-Mazya-cond-intro-new} is proved
there, while various necessary conditions for compact/bounded embeddings are obtained in 
Section~\ref{sect-optimality}.

Section~\ref{sect-examples} is devoted to concrete examples and to
embeddings on uniform domains with distance weights.
Example~\ref{ex-d-Hausdorff} deals with lower-dimensional measures and
Proposition~\ref{prop-bdry} is proved in this section.
Examples~\ref{ex-trace-emb} and \ref{ex-weighted-von-Koch} 
recover trace embeddings for Lipschitz domains in $\R^n$ and 
for the von Koch snowflake domain.

\begin{ack}
 J.~B. was supported by
the Swedish Research Council grants 621-2011-3139,
621-2014-3974 and 2018-04106. 
The work of A.~K. was supported by the National Science Center (Poland), grants
N201 397837 (years 2009-2012) and 2014/14/M/ST1/00600.
We are grateful to the anonymous referee for detailed comments that have 
substantially improved the paper and elliminated some early mistakes.
\end{ack}

\section{Compactness  and boundedness of embeddings} 
\label{sect-comp}

In this section we prove a general result about compact 
embeddings from certain function
spaces into $L^q$ spaces, possibly with respect to a different measure.
Unless said otherwise, we assume the following setting throughout the paper.

\begin{enumerate}
\renewcommand{\theenumi}{\textup{\bf (\Alph{enumi})}}%
\renewcommand{\labelenumi}{\theenumi}
\setcounter{enumi}{0}
  \item
\emph{General assumptions}: 
\end{enumerate}
\begin{enumerate}
\item[$\bullet$] $1\le p,q<\infty$,
\item[$\bullet$] $X$ is a   metric space equipped with a metric~$d$ and positive complete
Borel measures~$\mu$ and $\nu$, called the \emph{domain measure} and the
\emph{target measure}, 
\item[$\bullet$] $E\subset X$ is $\nu$-measurable and $\nu(E)>0$.
\end{enumerate}
Note that $X$ need not be complete.
For example, $X$ can be an open set in $\R^n$, as in 
Examples~\ref{ex-slit-disc}, \ref{ex-Rn-Rellich-Kondr} 
and~\ref{ex-bddness-cusp}.

We wish to cover the following possible situations: 
\begin{enumerate} 
\item[$\bullet$] $E$ is a domain (i.e.\ an open connected set)
in $X$ and $\nu$ is a restriction to $E$
of some  Borel measure defined on $X$ 
(such as $\mu$), 
\item[$\bullet$] $E$ is the boundary of a domain in $X$ and $\nu$ is
a lower dimensional Hausdorff measure defined on such a boundary.
\end{enumerate} 
If $\nu$ is a positive complete Borel measure defined only
on $E$, then we extend it to $X$ by setting $\nu(A):=\nu(A\cap E)$, 
whenever $A\cap E$ is $\nu$-measurable;
we will not distinguish between $\nu$ and its extension. 
We use the following notation: For a ball 
\[
B=B(x,r):=\{y\in X:d(x,y)<r\}, 
\]
let $\la B:=B(x,\la r)$.
In metric spaces, it can happen that balls with different
centres and/or radii denote the same set.
We  therefore adopt the convention that a ball is open and comes with a predetermined
centre and radius.
For $A\subset X$ with $0<\mu(A)<\infty$, let 
\begin{equation}    \label{eq-average}
u_{A,\mu}:=\vint_A u \, d\mu := \frac{1}{\mu(A)}\int_A u \,\dmu,
\end{equation}
and similarly for any other measure. 
Lebesgue spaces are denoted by $L^p(X,\mu)$,
where $\mu$ is often omitted from the notation if it is meant to be the Lebesgue
measure on a Euclidean space. 
The same simplification will be used for Sobolev type spaces. 
As in \cite{Haj-ContMat}, by $u\in L^p\loc(X,\mu)$ we mean that 
$u\in L^p(B,\mu)$ for all balls $B$ in $X$.

The letter $C$ denotes various positive
constants whose exact values depend only on unimportant
parameters and may vary even within the same line.

\subsection{General compactness and boundedness results}

Let ${\cal X}$ be a normed space and $\Y \subset {\cal X}$.
 Recall that a mapping from $\Y$  
into a Banach  space ${\cal Z}$ is  \emph{compact} if 
the image of every bounded sequence from 
$\Y$ has a convergent subsequence with
respect to the norm on ${\cal Z}$.
Our primary interest is in situations when $\Y$ is a subset of some
space ${\cal X}$ of functions defined on a metric space $X$.
For example, ${\cal X}$ can be  
a Sobolev space, which has a natural norm associated with it.
Since $\Y$ need not be a linear space, the restriction of such a norm need
not be a norm on $\Y$, only a metric.
We will, however, often use the term norm for it as well, 
in order to distinguish 
it from the metric $d$ defined on the underlying metric space $X$, 
which carries the functions under consideration.

One reason why we formulate our result in terms of the subset $\Y$,
rather than the normed space ${\cal X}$ itself, is that in applications,
Assumptions~\ref{ass-bdd} and \ref{ass-comp} below might only be satisfied 
for a certain class of functions, coming with suitable a priori estimates. 
Such a class need not be a linear space.
We use the notation  $\Y \embed {\cal Z}$ for the identity mapping, whenever
$\Y\subset{\cal Z}$, but also when suitable restrictions or
extensions of functions from $\Y$ belong to ${\cal Z}$, as in the trace
results in Proposition~\ref{prop-bdry}.

Given the exponents $1\le p,q<\infty$ and a set $\Y$ 
of $\nu$-measurable
functions defined on the metric space $X$, we consider the
following assumptions, where
\ref{ass-comp} stands  for compactness and \ref{ass-bdd} for boundedness:

\medskip

\begin{enumerate}
\renewcommand{\theenumi}{\textup{\bf (\Alph{enumi})}}%
\renewcommand{\labelenumi}{\theenumi}
\setcounter{enumi}{2}
  \item
\label{ass-comp} 
\emph{Existence of a countable sequence of finite covering families 
with a controlled overlap
and a subordinate Poincar\'e type inequality valid for all functions in~$\Y$}:
\end{enumerate}
\begin{itemize}
\item
For every $m\in\N$, there is a \emph{finite covering family}
$\{E_i,E'_i\}_{i=1}^{K_m}$, such that
\begin{enumerate}
\item[(i)] $E_i=E_i(m)\subset X$ are $\nu$-measurable, 
$\nu\bigl(E\setm\bigcup_{i=1}^{K_m}E_i\bigr)=0$ and 
\begin{equation}  \label{eq-nu(E-i)>0}
0<\nu(E_i)<\infty, \quad \text{for all }i=1,2,\ldots,K_m,
\end{equation}
\item[(ii)] $E'_i=E'_i(m)\subset X$ are $\mu$-measurable.  
\end{enumerate}
\item
The following  \emph{Poincar\'e type inequality} 
holds for each fixed $m$
and the corresponding sets $E_i$ and $E'_i$, $i=1,\ldots,K_m$:
There exist mappings $a_{E_i}:\Y\to\R$ and $G_m:\Y\to L^p(X,\mu)$ such that
$G_m(u)\ge 0$ $\mu$-a.e.\  and
\begin{equation}          \label{eq-def-loc-PI}
   \biggl(\int_{E_i} |u-a_{E_i}(u)|^q \,d \nu \biggr)^{1/q}
        \le C_m  \biggl( \int_{E'_i} G_m(u)^p \,\dmu \biggr)^{1/p},
\end{equation} 
whenever $u\in \Y$. 
The constant $C_m$ can depend on $m$, $p$, $q$ and on the family
$\{E_i,E'_i\}_{i=1}^{K_m}$, but not on $u$ and $i$.
\item
For such a covering family $\{E_i,E'_i\}_{i=1}^{K_m}$  we define the overlap
\begin{equation}  \label{eq-def-N}
N_m := \esssup_{x\in X} \sum_{i=1}^{K_m}  \chi_{E'_i}(x),
\end{equation}
where $\chi_A$ is the characteristic function of a set $A$
and the $\esssup$ is taken with respect to $\mu$.
\end{itemize}

The requirement that $\nu(E_i)>0$ in \eqref{eq-nu(E-i)>0} can clearly 
be fulfilled by discarding some of the $E_i$'s.
The sets $E_i$ and $E_i'$, as well as the mappings 
$u\mapsto a_{E_i}(u)$ and $u\mapsto G_m(u)$, may depend on $m$.
Also the overlap $N_m$ is allowed to depend on $m$, 
as long as it is compensated by the Poincar\'e constant $C_m$ 
in~\eqref{eq-def-loc-PI}, see Theorem~\ref{thm-precompact}.

\begin{remark} [\emph{Typical situations}]  
Concrete examples are when $E=X$  and $\nu=\mu$, 
or when $X=\clOm$ and $E=\bdry\Om$, where $\Om$
is a bounded open set,
equipped  with suitable measures. 
With $\Y$ equal to an appropriate Sobolev or fractional space,
our results then imply sharp 
compact Sobolev and trace type embeddings for such spaces.

Note that $E_i$ need not be a subset of $E_i'$.
For example, in trace theorems it may be convenient to have $E_i\subset\bdy\Om$
and $E_i'\subset\Om$ for some open set $\Om$ and $X=\clOm$.

Typical examples of $G_m(u)$ are $|\grad u|$ or its metric space
analogue $g_u$, but other choices are possible. 
Neither linearity nor locality is required for $G_m(u)$.
The possible dependence of $G_m(u)$ on $m$ allows
gradients adapted to various scales, such as the ones
introduced for fractional Haj\l asz, Besov and Triebel--Lizorkin spaces on metric spaces
 in Koskela--Yang--Zhou~\cite{KoYaZh11}.
We omit such generalizations here.
\end{remark}

\begin{example}[\emph{Choice of covering sets}]    \label{ex-slit-disc}
The simplest choice of a covering family is to let $E_i$ and $E'_i$ 
be balls with fixed radius $r_m$, such
  that $r_m\to0$ as $m\to\infty$.
The Poincar\'e type inequality~\eqref{eq-def-loc-PI} 
(as well as~\eqref{eq-def-loc-PI-bdd} below)
then often follows from the structure of the
set $\Y$ or from the usual \p-Poincar\'e inequality on $X$.  
However, this choice would exclude e.g.\ the following natural
situation: 

Let $X=E$ be the slit disc (i.e.\  
a disc with a radius removed) in the plane.
Covering $E$ by balls defined in $X$ by the Euclidean metric 
inevitably leads to disconnected balls divided by the slit.
Such balls do not support any Poincar\'e inequality for functions in
$W^{1,p}(X)$ with $G_m(u)= |\grad u|$,
while their connected components can be considered as two sets $E_i$
in our general assumptions.

For other types of gradients or other classes $\Y$ of 
functions it might still be possible that some kind of Poincar\'e
inequality holds on such disconnected balls.
Alternatively, connected balls with respect to the inner metric in $X$ can be used.
\end{example}

The number of covering families in \ref{ass-comp} is countable but 
each family is finite.
In general, the cardinality $K_m$ will tend to $\infty$, as $m\to\infty$. 
The following assumptions for boundedness require only one
covering family, which can be infinite.

\bigskip

\begin{enumerate}
\renewcommand{\theenumi}{\textup{\bf (\Alph{enumi})}}%
\renewcommand{\labelenumi}{\theenumi}
\setcounter{enumi}{1}
  \item
\label{ass-bdd} 
\emph{Existence of at most countable  covering family with a controlled overlap
and a subordinate Poincar\'e type inequality valid for all functions in} $\Y$:
\end{enumerate}
\begin{itemize}
\item
There exists a \emph{covering family}
$\{E_i,E'_i\}_{i\in\II}$ with $\II\subset\N$, such that
\begin{enumerate}
\item[(i)] $E_i\subset X$ are $\nu$-measurable, 
$\nu\bigl(E\setm\bigcup_{i\in\II}E_i\bigr)=0$,
\[
\nu(E_i)<\infty \quad \text{for every } i\in \II, \quad \text{and} 
\quad \inf_{i\in\II} \nu (E_i\cap E)>0,
\]
\item[(ii)] $E'_i\subset X$ are $\mu$-measurable. 
\end{enumerate}
\item
The following \emph{Poincar\'e type inequality} holds for all
$E_i$ and $E'_i$, $i\in\II$:
There exist mappings $a_{E_i}:\Y\to\R$ and $G:\Y\to L^p(X,\mu)$ such that
$G(u)\ge 0$ $\mu$-a.e.\  and
\begin{equation}         \label{eq-def-loc-PI-bdd}
   \biggl(\int_{E_i} |u-{a_{E_i}(u)} 
|^q \,d \nu \biggr)^{1/q}
        \le C_E \biggl( \int_{E'_i} G(u)^p \,\dmu \biggr)^{1/p},
\end{equation}
whenever $u\in \Y$. 
The constant $C_E$ can depend on $p$, $q$ and on the family
$\{E_i,E'_i\}_{i\in\II}$, but not on $u$ and $i$.
\item
The sets $E_i$ and $E'_i$, ${i\in\II}$, have a bounded overlap 
\begin{equation*}  
N := \esssup_{x\in X} \sum_{i\in\II} \chi_{E_i}(x) + \esssup_{x\in X} \sum_{i\in\II} \chi_{E'_i}(x) <\infty,
\end{equation*}
where the essential suprema are taken with respect to 
$\nu$ and $\mu$, respectively.
\end{itemize}

Our first result reads as follows. 
We postpone the proof to Section~\ref{sect-pf-comp}.

\begin{thm}[General sequential compactness]  \label{thm-precompact}  
Let $E$ and $\Y$ be such that
Assumptions~\ref{ass-comp} are satisfied  
with $C_m$, $N_m$, $K_m$, $G_m(\cdot)$ and $a_{E_i}(\cdot)$.
Let $\{u_n\}_{n=1}^\infty$ be a sequence in $\Y$, such that 
the sequence $\{a_{E_i}(u_n)\}_{n=1}^\infty$ is bounded
for every fixed $m$ and $i=1,\dots, K_m$.
Assume that one of the following conditions holds\/{\rm:}
\begin{enumerate}
\renewcommand{\theenumi}{\textup{(\roman{enumi})}}%
\renewcommand{\labelenumi}{\theenumi}
\item   \label{it-p-le-q} 
$1\le p\le q <\infty$  and 
\begin{equation}    \label{eq-CMN-to-0}
\lim_{m\to\infty} C_m N_m^{1/p} \sup_{n} \|G_m(u_n)\|_{L^p(X,\mu)} = 0,
\end{equation}  
\item    \label{it-p-ge-q}
$1\le q<p <\infty$ and   
\begin{equation}    \label{eq-CMN-to-00}
\lim_{m\to\infty} C_m N_m ^{1/p} K_m^{1/q-1/p} \sup_{n} \|G_m(u_n)\|_{L^p(X,\mu)} =0.
\end{equation}
\end{enumerate}
Then $\{u_n\}_{n=1}^\infty$ has a subsequence converging in $L^q(E,\nu)$.
\end{thm}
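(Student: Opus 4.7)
The overall strategy is the classical two-step scheme for precompactness: first extract, by a diagonal argument, a subsequence along which all the functionals $a_{E_i}(u_n)$ converge; then show that the extracted subsequence is Cauchy in $L^q(E,\nu)$, by comparing $u_n$ to piecewise-constant approximations on each covering scale $m$ and letting $m\to\infty$. Completeness of $L^q(E,\nu)$ then yields the conclusion.

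For the diagonal step, fix an enumeration of the countable family of pairs $(m,i)$ with $m\in\N$ and $1\le i\le K_m$. For each such pair, the sequence $\{a_{E_i}(u_n)\}_n$ is bounded in $\R$ by hypothesis, so by a standard diagonal extraction I may pass to a subsequence, again denoted $\{u_n\}$, such that $a_{E_i}(u_n)$ converges as $n\to\infty$ for every pair $(m,i)$. To show that this subsequence is Cauchy in $L^q(E,\nu)$, fix $\eps>0$ and estimate, for any $n,k$ and any $m$,
\begin{equation*}
\|u_n-u_k\|_{L^q(E,\nu)}^q
\le \sum_{i=1}^{K_m}\|u_n-u_k\|_{L^q(E_i,\nu)}^q,
\end{equation*}
using that $\nu(E\setm\bigcup_iE_i)=0$. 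On each $E_i$ I apply the triangle inequality with the constants $a_i^n:=a_{E_i}(u_n)$:
\begin{equation*}
\|u_n-u_k\|_{L^q(E_i,\nu)}
\le \|u_n-a_i^n\|_{L^q(E_i,\nu)}+\|u_k-a_i^k\|_{L^q(E_i,\nu)}+|a_i^n-a_i^k|\nu(E_i)^{1/q},
\end{equation*}
and bound the first two terms by the Poincar\'e inequality~\eqref{eq-def-loc-PI} applied to $u_n$ and $u_k$ respectively.

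The delicate point, and the main obstacle, is to aggregate these local estimates into a global bound whose $m$-dependence is exactly $C_mN_m^{1/p}$ in case~\ref{it-p-le-q} and $C_mN_m^{1/p}K_m^{1/q-1/p}$ in case~\ref{it-p-ge-q}. For $p\le q$, I use the embedding $\ell^p\hookrightarrow\ell^q$ for the sequence of numbers $\|G_m(u_n)\|_{L^p(E'_i,\mu)}$, followed by the overlap bound
\begin{equation*}
\sum_{i=1}^{K_m}\int_{E'_i}G_m(u_n)^p\,d\mu\le N_m\|G_m(u_n)\|_{L^p(X,\mu)}^p,
\end{equation*}
which together with \eqref{eq-def-loc-PI} yields a contribution at most $2C_mN_m^{1/p}\sup_n\|G_m(u_n)\|_{L^p(X,\mu)}$ from the Poincar\'e terms; for $q<p$, I insert H\"older's inequality on the finite sum over $i=1,\ldots,K_m$, which introduces the extra factor $K_m^{1/q-1/p}$ and gives the corresponding bound. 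By~\eqref{eq-CMN-to-0} or \eqref{eq-CMN-to-00}, I can then choose $m$ so large that the total Poincar\'e contribution is at most $\eps/2$, uniformly in $n,k$. With this $m$ fixed, the remaining sum $\sum_{i=1}^{K_m}|a_i^n-a_i^k|^q\nu(E_i)$ is a finite sum of sequences tending to $0$, since $\nu(E_i)<\infty$ by~\eqref{eq-nu(E-i)>0} and each $\{a_i^n\}_n$ is Cauchy by the diagonal construction; hence it is smaller than $(\eps/2)^q$ for all sufficiently large $n,k$. Combining these estimates shows that $\{u_n\}$ is Cauchy in $L^q(E,\nu)$, completing the proof.
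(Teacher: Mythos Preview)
Your proof is correct and follows essentially the same approach as the paper's: split $\|u_n-u_k\|_{L^q(E,\nu)}^q$ over the covering sets $E_i$, apply the triangle inequality and the Poincar\'e bound \eqref{eq-def-loc-PI} on each piece, aggregate using $\ell^p\hookrightarrow\ell^q$ (case~\ref{it-p-le-q}) or H\"older (case~\ref{it-p-ge-q}) together with the overlap bound, and handle the constants $a_{E_i}(u_n)$ by a diagonal extraction and Bolzano--Weierstrass. The only organizational difference is that you perform the diagonal extraction over all pairs $(m,i)$ at the outset and then verify the Cauchy property, whereas the paper interleaves the two by choosing, for each $\eps_j=2^{-j}$, a scale $m_j$ and extracting a further subsequence along which the finitely many $a_{E_i}$ converge; both variants are standard and equivalent.
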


\begin{example}[\emph{Rellich--Kondrachov theorem}]  
\label{ex-Rn-Rellich-Kondr}
If $X=\R^n$ with the Euclidean metric 
and $\mu=\nu=dx$ is the Lebesgue measure, then 
the Sobolev--Poincar\'e inequality
\begin{equation}    \label{eq-(q,p)-PI}
\biggl( \vint_B |u-u_{B,dx}|^q\,dx \biggr)^{1/q}
\le C r \biggl( \vint_B |\grad u|^p\,dx \biggr)^{1/p}
\end{equation}
holds for all $q\le p^*:=np/(n-p)$, every ball $B=B(x_0,r)$
and every $u\in W^{1,p}(\R^n)$, where $p<n$.
This implies that the Poincar\'e type
inequality~\eqref{eq-def-loc-PI} is satisfied with 
$E_i=E'_i=B(x_{i,m},r_m)$ for some choice of $x_{i,m}\in E\subset\R^n$ and with 
\[
C_m = C r_m^{1+n/q-n/p} \to 0 \quad \text{as }r_m\to0 
\text{ when } q<p^*.
\]
Clearly, $E$ can be covered by such balls with a bounded overlap
only depending on~$n$.
For each fixed $E_i$ and for $a_{E_i}(u)=u_{E_i,dx}$ we have
the uniform bound
\begin{equation*}   
|a_{E_i}(u)|\le \mu({E_i})^{-1/p} \|u\|_{L^p(\Rn)}.
\end{equation*} 
Theorem~\ref{thm-precompact} thus implies the classical
Rellich--Kondrachov result about compactness of
the embedding $W^{1,p}(\R^n)\embed L^q(E)$ for
every bounded measurable set $E$ when $q<p^*$.
For bounded domains $\Om\subset\R^n$, compactness of the embedding 
\[
W^{1,p}(\Om) \longemb L^q(\Om)
\] 
can be obtained in a similar way if the intersections $\Om\cap B(x,r_m)$,
$x\in\Om$, support~\eqref{eq-def-loc-PI}.
This is possible e.g.\ for uniform domains, and thus for bounded Lipschitz
domains, see Remark~\ref{rem-when-mu|Om}, 
Aikawa~\cite[p.~120]{Aik}
and Maz$'$ya~\cite[Sections~1.1.8--1.1.11]{MazSobBook}.
\end{example}

\begin{remark}[\emph{Discussion of sharpness}]   \label{rem-Haj-Liu-opt}
For certain normed spaces,
compactness of an embedding into $L^q(X,\mu)$ is equivalent to
the boundedness of an embedding into a ``better'' Orlicz space 
$L^\Phi(X,\mu)$, provided that $\mu(X)<\infty$, see Haj\l asz--Liu~\cite{hajzu}.
On the other hand, in Remark~\ref{rem-limiting-case} and
Example~\ref{ex-optimal-emb} we present
a compact embedding with a target space that is optimal among $L^q$
spaces.

Theorem~\ref{thm-Mazya-cond-intro-new} shows that a sufficient
condition on measures, deduced from Theorem~\ref{thm-precompact},
is also essentially necessary for compactness. 
A measure which exactly satisfies this condition for the optimal
exponent is presented in Example~\ref{ex-optimal-emb}. 

In Theorem~\ref{thm-precompact} we prove compactness in $L^q$ for the same exponent $q$ as
in the left-hand side of~\eqref{eq-def-loc-PI}, provided that  $C_m$
and $N_m$ are  well-controlled as $m\to\infty$.
This is in contrast to Chua--Rodney--Wheeden~\cite{ChuaRodWhe},  which uses uniformly
bounded overlap (as in Euclidean spaces)
and obtains compactness in $L^{q'}$ only for $q'<q$, assuming initial boundedness in $L^q$.
For $q\ge p$ we need stronger Poincar\'e inequalities than~\cite{ChuaRodWhe},
 but we can also
reach the limiting exponent $q$ if $C_m$ is good enough.
This is exhibited e.g.\ in Theorem~\ref{thm-w-al-v-be}, Example~\ref{ex-optimal-emb}
and Proposition~\ref{prop-q=p-M-al}.
\end{remark}

Our next result deals with the  boundedness  of embeddings.
Note that finiteness is not required for $\mu$ and $\nu$, not even
locally.
For instance, in Example~\ref{ex-bddness-cusp}, finiteness of the measure fails
for balls containing the origin.

\begin{thm}[General boundedness]  \label{thm-bounded}  
Let $E$ and $\Y$ be  such that
the Assumptions~\ref{ass-bdd} are satisfied.  
Assume that one of the following conditions holds\/{\rm:}
\begin{enumerate}
\renewcommand{\theenumi}{\textup{(\roman{enumi})}}%  
\renewcommand{\labelenumi}{\theenumi}
\item 
The index set $\II$ is finite, $1\le q<\infty$ and $t\ge1$, 
\item 
$\II$ is countably infinite, $1\le p\le q<\infty$ and $1\le t<q$.
\end{enumerate}
Then there is $C>0$ such that for all $u\in \Y\cap L^t(E,\nu)$,
\[
\| u\|_{L^q(E,\nu)} \le C \bigl( \| G(u)\|_{L^p(X,\mu)} 
    +\| u\|_{L^t(E,\nu)} \bigr).
\]
\end{thm}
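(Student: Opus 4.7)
The plan is to localize $\|u\|_{L^q(E,\nu)}$ to the covering pieces $E_i\cap E$, apply the local Poincar\'e inequality \eqref{eq-def-loc-PI-bdd} there, and recover an upper bound on the implicit constants $a_{E_i}(u)$ using $u\in L^t(E,\nu)$ together with the uniform positivity $\inf_i \nu(E_i\cap E)>0$.

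Since $\chi_E\le\sum_i\chi_{E_i}$ $\nu$-a.e., I would begin with
\[
\|u\|_{L^q(E,\nu)}^q\le\sum_{i\in\II}\|u\|_{L^q(E_i\cap E,\nu)}^q,
\]
and for each $i$ combine the triangle inequality with \eqref{eq-def-loc-PI-bdd} to obtain
\[
\|u\|_{L^q(E_i\cap E,\nu)}\le C_E\|G(u)\|_{L^p(E'_i,\mu)}+|a_{E_i}(u)|\,\nu(E_i\cap E)^{1/q}.
\]

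To estimate $|a_{E_i}(u)|$, observe that since it is a constant,
\[
|a_{E_i}(u)|\,\nu(E_i\cap E)^{1/t}\le\|u\|_{L^t(E_i\cap E,\nu)}+\|u-a_{E_i}(u)\|_{L^t(E_i\cap E,\nu)}.
\]
In case (ii), where $t<q$, H\"older on $E_i\cap E$ followed by \eqref{eq-def-loc-PI-bdd} gives $\|u-a_{E_i}(u)\|_{L^t(E_i\cap E,\nu)}\le\nu(E_i\cap E)^{1/t-1/q}C_E\|G(u)\|_{L^p(E'_i,\mu)}$, and hence
\[
|a_{E_i}(u)|\,\nu(E_i\cap E)^{1/q}\le\nu(E_i\cap E)^{1/q-1/t}\|u\|_{L^t(E_i\cap E,\nu)}+C_E\|G(u)\|_{L^p(E'_i,\mu)}.
\]
Since $1/q-1/t<0$, the hypothesis $\inf_i\nu(E_i\cap E)>0$ bounds the weight $\nu(E_i\cap E)^{1/q-1/t}$ uniformly in $i$. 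Raising to the $q$-th power and summing, I would then apply the elementary inequality $\sum a_i^\alpha\le(\sum a_i)^\alpha$ valid for $\alpha\ge 1$ with $\alpha=q/p$ (using $p\le q$) and $\alpha=q/t$, together with the bounded overlap $N$, to obtain
\[
\sum_i\|G(u)\|_{L^p(E'_i,\mu)}^q\le N^{q/p}\|G(u)\|_{L^p(X,\mu)}^q \quad\text{and}\quad \sum_i\|u\|_{L^t(E_i\cap E,\nu)}^q\le N^{q/t}\|u\|_{L^t(E,\nu)}^q,
\]
which together yield the conclusion in case (ii).

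Case (i) is simpler because $\II$ is finite: the same chain of inequalities works when $t<q$ without any uniform bound on $\nu(E_i)$, and when $t\ge q$ one can bypass the Poincar\'e estimate altogether, since $\nu(E)\le\sum_i\nu(E_i)<\infty$ and H\"older then embeds $L^t(E,\nu)$ into $L^q(E,\nu)$ directly. The main subtlety throughout is that $a_{E_i}(u)$ is only defined implicitly via \eqref{eq-def-loc-PI-bdd}; its magnitude has to be inferred both from that inequality and from the $L^t$-control of $u$ on the set $E_i\cap E$ of strictly positive $\nu$-measure, which is precisely why the lower bound $\inf_i\nu(E_i\cap E)>0$ enters as a hypothesis.
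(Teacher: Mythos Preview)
Your argument is correct and follows essentially the same route as the paper: localize to the covering pieces, split $u$ into $(u-a_{E_i}(u))+a_{E_i}(u)$, control the first piece by the Poincar\'e inequality~\eqref{eq-def-loc-PI-bdd}, control the constant $a_{E_i}(u)$ via the $L^t$-norm of $u$ and the lower bound on $\nu(E_i\cap E)$, then sum using bounded overlap and the elementary inequality $\sum a_i^\alpha\le(\sum a_i)^\alpha$ for $\alpha\ge1$.

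The only noteworthy difference is in how the constant $a_{E_i}(u)$ is handled. The paper first invokes Remark~\ref{rem-choices} to replace the abstract $a_{E_i}(u)$ by the integral average $u_{E_i,\nu}$ (at the cost of doubling $C_E$), after which $|u_{E_i,\nu}|\le(\vint_{E_i}|u|^t\,d\nu)^{1/t}$ is immediate from H\"older. You instead keep the abstract $a_{E_i}(u)$ and bound it by the triangle inequality in $L^t(E_i\cap E,\nu)$ together with a second application of~\eqref{eq-def-loc-PI-bdd}. Both devices accomplish the same thing; yours is slightly more self-contained (no appeal to Remark~\ref{rem-choices}), while the paper's is marginally shorter. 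One small imprecision: in case~(i) with $p>q$ the elementary inequality fails, but since $\II$ is finite the summation step is trivial anyway, so this does not affect correctness.
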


Also the proof of Theorem~\ref{thm-bounded} is postponed to 
Section~\ref{sect-pf-comp}.

\begin{example}[\emph{Bounded embeddings on bad domains}]             
\label{ex-bddness-cusp}
Consider the cusp 
\[
\Om:=\{x=(x',x_n)\in\R^n: |x'|<x_n^\ga <1 \}, \quad \ga>1,
\]
and let $X=E=\Om$ with the Euclidean metric.
For each $k=1,2,\ldots$\,, let $j_k$ be the smallest integer such that
$j_k\ge 2^{k(\ga-1)}$ and cover $\Om$ up to a set of zero measure 
by countably many ``chunks''
\[
\Omkj :=\{ x\in \Om: 2^{-k}+ (j-1)2^{-k\ga} <x_n< 2^{-k}+j2^{-k\ga}\}, 
\quad j=1,2,\ldots,j_k,
\]
of length $2^{-k\ga}$, $k=1,2,\ldots$\,.
Each $\Omkj$ is biLipschitz equivalent to the unit cylinder 
\[
T_k(\Omkj) = B(0,1) \times (j-1,j) \subset \R^n, \quad j=1,2,\ldots,j_k,
\]
by means of the mapping 
$T_k:(x',x_n)\mapsto \bigl( x'/x_n^{\ga} ,2^{k\ga}(x_n - 2^{-k})\bigr)$.
The triangle inequality implies that
\[
|T_k(x)-T_k(y)| \le \frac{1}{x_n^{\ga}} \biggl( |x'-y'| 
   + \frac{|y'|}{y_n^{\ga}} |x_n^{\ga}-y_n^{\ga}| \biggr) + 2^{k\ga}|x_n-y_n| 
\le C 2^{k\ga} |x-y|.
\]
Similar estimates hold also for $T_k^{-1}$ and we conclude that 
$|T_k(x)-T_k(y)|$ is comparable to $2^{k\ga}|x-y|$.
Consider the measures 
\[
d\mu (x) =x_n^{\alpha}\, dx \quad \text{and} \quad
d\nu = x_n^{\beta}\, dx,
\]
which on each $\Omkj$ are comparable to $2^{-k\al}\,dx$ and
$2^{-k\beta}\,dx$, respectively.
Starting from the classical Sobolev--Poincar\'e inequality on the unit 
cylinder $T_k(\Omkj)$, 
we arrive after a suitable change of variables at
the Sobolev--Poincar\'e type inequality
\begin{equation*}
\biggl( \int_{\Omkj} |u-a_{k,j}(u)|^q \,d\nu \biggr)^{1/q}
\le C 2^{-k\theta} \biggl( \int_{\Omkj} |\grad u|^p \,d\mu \biggr)^{1/p},
\end{equation*}
where $C$ does not depend on $k$ and $j$,
\begin{equation}    \label{eq-def-akj}
a_{k,j}(u)=\vint_{T_k(\Omkj)}u\circ T_k^{-1} (y)\,dy,
\end{equation}
\[
\theta = \ga +\frac{\beta+n\ga}{q} - \frac{\alpha +n\ga}{p}, 
\quad q\le p^*=\frac{np}{n-p}, \quad p<n.
\]
For simplicity,  choose $\al=\beta =-n\ga$ which
gives $\theta \ge 0$ and $1/C \le \nu (\Omkj) \le C$ for all $k,j$.
Theorem~\ref{thm-bounded}, with $E_i$ and $E'_i$ replaced by $\Omkj$,
now guarantees boundedness of the embedding  
\begin{equation}  \label{eq-emb-on-cusp}
W^{1,p}(\Omega,\mu) \longemb  L^q(\Omega,\nu), 
\quad p\le q\le p^*.
\end{equation}
Since $\nu(\Om)=\infty$, we cannot conclude embedding for $q<p$.
In fact, for 
\[
 q< \frac{p(\ga-1)}{p+\ga-1},
\] 
the function $u(x)=x_n^{(\ga-1)/q}$
shows that the embedding $W^{1,p}(\Om,\mu) \embed  L^q(\Om,\nu)$
fails. 
As $\ga\to\infty$, this includes all $q<p$ and hence the above range $q\ge p$
 is at least asymptotically sharp.
Since $\mu$ and $\nu$ are comparable to a multiple of the Lebesgue measure on 
each $\Omkj$, the upper end point $p^*$ in \eqref{eq-emb-on-cusp} is sharp
in the same way as for unweighted Sobolev spaces.

We leave it to the interested reader to make the necessary 
modifications for other weights and domains.
A similar argument with $\ga=1$ can be applied to the
punctured ball $B(0,1)\setm\{0\}\subset\R^n$, equipped with the 
measure $d\mu(x)=|x|^{-n}\,dx$, 
where the role of the sets $E_i$
is played by Whitney cubes near the origin.
Whitney type decompositions of other domains
can also be considered.
\end{example}

\subsection{Discussion of Assumptions~\ref{ass-bdd}
and~\ref{ass-comp}}
\label{sect-discuss}

Our aim is to obtain compactness 
and boundedness under least possible
assumptions on the covering family and the involved measures.
Several remarks are therefore in order to clarify these assumptions.

\begin{remark}[\emph{Choice of  $a_{E_i}(u)$}]       \label{rem-choices}
A standard choice in~\eqref{eq-def-loc-PI} 
and \eqref{eq-def-loc-PI-bdd} is $a_{E_i}(u)= u_{E_i,\nu}$ 
but other integral averages, medians, traces or even nonlinear
functionals can also be used, see e.g.\ \eqref{eq-def-akj} in
Example~\ref{ex-bddness-cusp} and the proof of
Theorem~\ref{thm-w-al-v-be}.

An application of the triangle inequality shows
that if \eqref{eq-def-loc-PI} and
  \eqref{eq-def-loc-PI-bdd} hold with some $a_{E_i}(u)$ then they also
hold with $a_{E_i}(u)=u_{E_i,\nu}$, whenever defined,
at the cost of enlarging the constant on the right-hand side.
\end{remark}

\begin{remark}[\emph{Covering by balls}] \label{rem-Hausdorff-max-princ}
Since $E\subset\bigcup_{x\in E}B(x,r)$, the Hausdorff maximality principle
(see e.g.\ Ziemer~\cite[p.\ 7]{Ziem}) 
provides us for every $r>0$ with a maximal
pairwise disjoint family of balls $\tfrac12 B_\al$ 
of radius $\tfrac12r$ and centres in $E$.
The maximality of the family implies that the balls $B_\al$ cover $E$.
In general, this construction
does not guarantee any bounds on the overlap of the balls $B_\al$,
as shown by the following example.
\end{remark}

\begin{example}[\emph{Unbounded overlap}]   
Let $X=\bigcup_{n=1}^\infty X_n$,  where $X_n=[0,2^{-n}]^n\subset\R^n$,
$n=1,2,\ldots$\,, and the origins in all $X_n$ are identified as one point $0$
in $X$.
The metric on $X_n$ is defined as the $n$-dimensional Euclidean metric $d_n$, 
while for $x\in X_n$ and $y\in X_m$ with $n\ne m$ we let $d(x,y)=d_n(x,0)+d_m(y,0)$.
Note that $X$ is compact.
We shall now see how $E:=X$ can be covered by open balls with radius 
$r=2^{-m}$, which will give us the covering 
family $\{ E_i,E_i^{'}\}_{i=1}^{K_m}$. 

The ball $B(0,r)$ contains
all $X_n$ with $2^{-n}\sqrt{n}\le r$, while the
remaining $X_n$ have to be covered by additional balls of radius $r$.
For such $n$ and sufficiently large $m$, the required overlap of Euclidean balls is determined by
the Lebesgue covering dimension of $\R^n$, 
and equals $n+1$, see Munkres~\cite[p.~305]{Munkres}.
Since the minimal overlap $N_m$ on $X$ majorizes the minimal overlap for all
such $X_n$ with $2^{-n}\sqrt{n}> r$, we conclude that $N_m\to\infty$ as $m\to\infty$.
\end{example}

We shall now see that a doubling property guarantees a bounded overlap.

\begin{deff}[\emph{Doubling spaces and measures}]
\label{def-doubling}
A metric space $X$ is 
\emph{doubling}\/ if every ball of radius $r$ 
can be covered by at most
$M$ balls of radius $r/2$, where $M$ is independent of $r$.
A measure $\mu$ is \emph{doubling} if there exists a constant
$C_\mu \geq 1$ such that for all balls $B$ in~$X$,
\begin{equation*} 
0<\mu(2B) \le C_\mu \mu(B)<\infty.
\end{equation*} 
\end{deff}

Every space carrying a doubling measure is doubling.
The following lemma, which may be of independent interest, 
implies that if $X$ 
or $\mu$ is doubling and the covering family is obtained from balls 
with the same radius then such a family can always be chosen with 
a bounded overlap independent of the radius.

The doubling property also implies that such a family is finite
whenever it is confined to a bounded set.
In particular, every bounded set is totally bounded.
This need not be true if the doubling property only holds   
for balls up to a certain radius. 
Thus, total boundedness of $E$ will be explicitly assumed
in those cases.

\begin{lem}[Doubling and controlled overlap] 
\label{lem-Hausdorff-max-princ}
Assume that $\la\ge\tfrac12$ and that for a function 
$M_\la:(0,\infty)\to(0,\infty)$, 
one of the following conditions holds for all $r>0$\/{\rm:}
\begin{enumerate}
\item  \label{it-M(r)-X}
Every ball of radius $\la r$  can be covered by  
$M_\la(r)$ balls of radius $\tfrac12r$.
\item  \label{it-M(r)-mu}
For all balls $B$ of radius $\tfrac12r$,
\begin{equation*}  
0<\mu((4\la+1)B) \le M_\la(r) \mu(B) <\infty.
\end{equation*} 
\end{enumerate}
\smallskip
Let $\{B_\al=B(x_\al,r)\}_{\al\in A}$
be a collection of balls  such that the balls $\{\tfrac12 B_\al\}_{\al\in A}$
are pairwise disjoint.
Then the balls $\{\la B_\al\}_{\al\in A}$  have overlap at most $M_\la(r)$.

In particular, for doubling spaces and measures, the overlap 
is independent of~$r$.
\end{lem}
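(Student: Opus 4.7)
The plan is to fix an arbitrary point $x \in X$, set $A_x := \{\alpha \in A : x \in \la B_\al\}$, and show $|A_x| \le M_\la(r)$ in each of the two cases separately. The common starting observation is that $\al \in A_x$ implies $x_\al \in B(x,\la r)$, since a ball is symmetric in the roles of its centre and any interior point. Thus all centres of balls contributing to the overlap at $x$ sit inside the single ball $B(x,\la r)$, and the two cases differ only in how this ``confinement'' is exploited.

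Under assumption \ref{it-M(r)-X} I would simply cover $B(x,\la r)$ by $M_\la(r)$ balls of radius $r/2$ and argue that each covering ball $B(y,r/2)$ contains at most one of the centres $\{x_\al\}_{\al\in A_x}$. Indeed, if two distinct $x_\al,x_\be$ both lay in the same $B(y,r/2)$, then $y \in B(x_\al,r/2)\cap B(x_\be,r/2) = \tfrac12 B_\al \cap \tfrac12 B_\be$, contradicting the pairwise disjointness of the half-balls. Distributing the $|A_x|$ centres among the $M_\la(r)$ covering balls then yields the desired bound.

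Under assumption \ref{it-M(r)-mu} I would instead run a volume comparison. The triangle inequality gives, for every $\al \in A_x$, the chain of inclusions
\[
\tfrac12 B_\al \subset B\bigl(x,\tfrac12(2\la+1)r\bigr) \subset B\bigl(x_\al,\tfrac12(4\la+1)r\bigr) = (4\la+1)\tfrac12 B_\al,
\]
so the measure doubling assumption applied to $\tfrac12 B_\al$ yields $\mu(\tfrac12 B_\al) \ge \mu(B(x,\tfrac12(2\la+1)r))/M_\la(r)$, a quantity that is strictly positive and finite by the hypotheses. Summing over $\al \in A_x$ and using the pairwise disjointness of the half-balls, together with their common containment in $B(x,\tfrac12(2\la+1)r)$, gives
\[
|A_x|\,\frac{\mu(B(x,\tfrac12(2\la+1)r))}{M_\la(r)} \le \sum_{\al\in A_x}\mu(\tfrac12 B_\al) \le \mu\bigl(B(x,\tfrac12(2\la+1)r)\bigr),
\]
from which $|A_x| \le M_\la(r)$ follows upon division. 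The main technical point to get right is the bookkeeping of the dilation factors, so that the enlargement $4\la+1$ on the measure side matches precisely the worst-case triangle-inequality estimate; no deeper obstacle is expected. The ``in particular'' clause is then immediate, since for a doubling space or doubling measure one may take $M_\la$ independent of $r$, depending only on the doubling constant and on $\la$.
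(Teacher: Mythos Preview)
Your proposal is correct and follows essentially the same approach as the paper: fixing $x$, defining $A_x$, and in case~\ref{it-M(r)-X} using a pigeonhole argument on the covering balls, while in case~\ref{it-M(r)-mu} running the volume comparison via the inclusions $\tfrac12 B_\al \subset B(x,(\la+\tfrac12)r) \subset (4\la+1)\tfrac12 B_\al$. The bookkeeping of dilation factors matches the paper's exactly (your $\tfrac12(2\la+1)$ and $\tfrac12(4\la+1)$ are the paper's $\la+\tfrac12$ and $2\la+\tfrac12$), and no step is missing.
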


\begin{proof}
Let $x\in X$ be arbitrary and let $A_x=\{\al\in A:x\in \la B_\al\}$.
Then
\begin{equation}   \label{eq-Bal-subset-la+1/2}
x_\al\in B(x,\la r) \quad \text{and} \quad 
\tfrac12 B_\al\subset B(x,(\la+\tfrac12)r) \quad \text{for all } \al\in A_x.
\end{equation}
Suppose that \ref{it-M(r)-X} holds. 
Then the ball $B(x,\la r)$ can be covered by  
$M_\la(r)$  balls $B'_i$ of radius $\tfrac12r$.
Hence, every $x_\al$ with $\al\in A_x$ belongs to some $B'_i$,
whose centre in turn belongs to $\tfrac12 B_\al$.
Since the balls $\{\tfrac12 B_\al\}_{\al\in A}$ are pairwise disjoint, 
it follows that $x_\al$ and $x_{\al'}$ cannot belong to the same $B'_i$ 
and consequently, there are at most $M_\la(r)$ indices
$\al$ such that $x\in \la B_\al$.
Similarly, if \ref{it-M(r)-mu} holds then for every $\alpha \in A_x$, 
\[
\mu(B(x,(\la+\tfrac12)r)) \le \mu(B(x_\al,(2\la+\tfrac12)r))
\le M_\la(r) \mu(\tfrac12 B_\al).
\]
Hence, by \eqref{eq-Bal-subset-la+1/2} and
the pairwise disjointness of the balls $\tfrac12 B_\al$, $\al\in A$, 
\[
\mu(B(x,(\la+\tfrac12)r)) \ge \sum_{\al\in A_x} \mu(\tfrac12 B_\al) \ge
\frac{\mu(B(x,(\la+\tfrac12)r))}{M_\la(r)} \sum_{\al\in A_x} 1,
\]
from which it follows that the index set $A_x$ can have at most $M_\la(r)$ elements.
\end{proof}

\subsection{Proofs of Theorems~\ref{thm-precompact} 
and~\ref{thm-bounded} }
\label{sect-pf-comp}

\begin{proof}[Proof of Theorem~\ref{thm-precompact}] 
We start by considering the case~\ref{it-p-le-q}.
For every fixed $m=1,2,\ldots$,
consider the covering family $\{E_i,E'_i\}_{i=1}^{K_m}$.
Let $u\in \Y$ be arbitrary, with $G_m(u)$ as in~\eqref{eq-def-loc-PI}.  
Then,  by the local Poincar\'e type inequality~\eqref{eq-def-loc-PI}, 
\begin{equation}  \label{eq-after-loc-PI}
\sum_{i=1}^{K_m} \int_{E_i} |u-a_{E_i}(u)|^q \,d\nu
\le C_m^q \sum_{i=1}^{K_m} \biggl( \int_{E'_i} G_m(u)^p \,d\mu \biggr)^{q/p}.
\end{equation}
Since $q\ge p$, the elementary inequality
$\sum_i x_i^{q/p} \le \bigl(\sum_i x_i\bigr)^{q/p}$ yields
\begin{equation*}  
\sum_{i=1}^{K_m} \biggl( \int_{E'_i} G_m(u)^p \,d\mu \biggr)^{q/p}
\le 
\biggl( \displaystyle \sum_{i=1}^{K_m} \int_{E'_i} G_m(u)^p \,d\mu \biggr)^{q/p}.
\end{equation*}
Inserting this into \eqref{eq-after-loc-PI}, together with~\eqref{eq-def-N},
implies
\begin{equation}  \label{eq-est-with-C-K-N}
\sum_{i=1}^{K_m} \int_{E_i} |u-a_{E_i}(u)|^q \,d\nu
\le C_m^q \biggl( N_m \int_{X} G_m(u)^p \,d\mu \biggr)^{q/p}.
\end{equation}
Now, consider a sequence $\{u_n\}_{n=1}^\infty\subset\Y$,
such that $\{G_m(u_n)\}_{n=1}^\infty$ satisfies~\eqref{eq-CMN-to-0} and
$\{a_{E_i}(u_n)\}_{n=1}^\infty$ is bounded for every fixed $E_i$
in the covering family.
We shall now show that $\{u_n\}_{n=1}^\infty$ has a Cauchy subsequence in
$L^q(E,\nu)$.
We have for all $n,k\ge1$,
\begin{align}   \label{eq-split-vn-vm}
&\int_E |u_n-u_k|^q\,d\nu
\le 3^{q-1} \sum_{i=1}^{K_m} \biggl( \int_{E_i} |u_n-a_{E_i}(u_n)|^q \,d\nu
\\
&\quad \quad \quad \quad \quad \quad \quad \quad \quad
+ \int_{E_i} |u_k-a_{E_i}(u_k)|^q \,d\nu
+ \int_{E_i} {|a_{E_i}(u_n)-a_{E_i}(u_k)|^q} \,d\nu \biggr). \nonumber
\end{align}
The sums of the first two integrals on the right-hand side can be estimated
using \eqref{eq-est-with-C-K-N} with $u$ 
replaced by $u_n$ and $u_k$, respectively.
Hence, because of~\eqref{eq-CMN-to-0}, 
we can for every $\eps>0$
choose a sufficiently large $m$ in \eqref{eq-split-vn-vm}
such that for all $n,k\ge1$,
\begin{equation}   \label{eq-fix-r}
\int_E |u_n-u_k|^q\,d\nu \le \eps
+ 3^{q-1} \sum_{i=1}^{K_m} |a_{E_i}(u_n)-a_{E_i}(u_k)|^q \nu(E_i),
\end{equation}
where $\{E_i\}_{i=1}^{K_m}$ is the covering family corresponding to $m$.
With this family fixed,
the sequence $\{a_{E_i}(u_n)\}_{n=1}^\infty$ is bounded in $\R$
for every $i=1,\ldots,K_m$.
Hence, applying the Bolzano--Weierstrass theorem, 
we can for $\eps_1=\tfrac12$ and a suitable family $E_i=E_i(m_1)$,
corresponding to $m_1$,
find a subsequence $\{u^{(1)}_n\}_{n=1}^\infty$ of $\{u_n\}_{n=1}^\infty$
such that the sequence $\{a_{E_i}(u^{(1)}_n)\}_{n=1}^\infty$
is convergent for every $i=1,\ldots,K_{m_1}$ and~\eqref{eq-fix-r} becomes
\[
\int_E |u^{(1)}_n-u^{(1)}_k|^q\,d\nu \le \frac12
+ 3^{q-1} \sum_{i=1}^{K_{m_1}} |a_{E_i}(u^{(1)}_n)-a_{E_i}(u^{(1)}_k)|^q \nu(E_i)
< 1     
\]
for all $n,k\ge1$.
Similarly, we can find another family 
corresponding to $\eps_2=\tfrac14$ and $m_2$, and a subsequence 
$\{u^{(2)}_n\}_{n=1}^\infty$ of $\{u^{(1)}_n\}_{n=1}^\infty$ such that
\[
\int_E |u^{(2)}_n-u^{(2)}_k|^q\,d\nu < \frac12 \quad \text{when } n,k\ge1.
\]
Continuing in this way and choosing the diagonal sequence
$\{u^{(n)}_n\}_{n=1}^\infty$ we construct
a Cauchy sequence in $L^q(E,\nu)$.
Since $L^q(E,\nu)$ is complete,  we are done with the case~\ref{it-p-le-q}.
In the case~\ref{it-p-ge-q}, the only change needed in the above proof 
is to use H\"older's inequality instead of 
the elementary inequality just after~\eqref{eq-after-loc-PI}.
\end{proof}

\begin{proof}[Proof of Theorem~\ref{thm-bounded}] 
By integrating over smaller sets in the left-hand side 
of~\eqref{eq-def-loc-PI-bdd}, if needed,
we can assume that $E=\bigcup_{i\in\II} E_i$. 
According to Remark~\ref{rem-choices} we can assume that 
$a_{E_i}(u)= u_{E_i,\nu}$ in~\eqref{eq-def-loc-PI-bdd}.
Then
\begin{align}   \label{eq-u-a}
\int_E |u|^q \,d\nu &\le 2^{q-1} \sum_{i\in\II} 
       \biggl( \int_{E_i} |u-a_{E_i,\nu}(u)|^q \,d\nu  
            + \int_{E_i} |u_{E_i,\nu}|^q \,d\nu \biggr) \nonumber\\
&\le 2^{q-1} \sum_{i\in\II} 
      \biggl( C_E^q \biggl( \int_{E'_i} G(u)^p \,d\mu \biggr)^{q/p}
            + \nu(E_i) \biggl( \vint_{E_i} |u|^t \,d\nu \biggr)^{q/t} \biggr),
\end{align}
where in the last term we used H\"older's  inequality.
When $\II$ is finite this immediately yields that
\[
\int_E |u|^q \,d\nu 
    \le C \bigl( \| G(u)\|^q_{L^p(X,\mu)} + \|u\|^q_{L^t(E,\nu)} \bigr).
\]
For infinite $\II$ and $q\ge p$, the elementary inequality
$\sum_i x_i^{q/p} \le \bigl(\sum_i x_i\bigr)^{q/p}$ yields
\[
\sum_{i\in\II} \biggl( \int_{E'_i} G(u)^p \,d\mu \biggr)^{q/p}
  \le \biggl( \sum_{i\in\II} \int_{E'_i} G(u)^p \,d\mu \biggr)^{q/p}
  \le N^{q/p} \| G(u)\|^q_{L^p(X,\mu)},
\]
because of the bounded overlap of the sets $E'_i$.
Since $q/t>1$, the lower bound on $\nu(E_i)$, the above 
elementary inequality
and the bounded overlap of $E_i$ imply that
\begin{equation*}
\sum_{i\in\II} \nu(E_i) \biggl( \vint_{E_i} |u|^t \,d\nu \biggr)^{q/t} 
   \le \Bigl( \inf_{i\in\II} \nu(E_i) \Bigr)^{1-q/t} 
         N^{q/t} \|u\|^q_{L^t(E,\nu)}.
\end{equation*}
Inserting the last two estimates into~\eqref{eq-u-a} concludes the proof
also for infinite~$\II$.
\end{proof}

\section{Embeddings with nondoubling measures}
\label{sect-nondoubl}

We now provide some concrete results about compact 
embeddings based on conditions \eqref{eq-CMN-to-0} and \eqref{eq-CMN-to-00}.
Our first result deals with the weighted spaces
\begin{align*}
\Wp(\Om,\mu) &= \{u\in L^p(\Om,\mu): |\grad u|\in L^p(\Om,\mu)\}, \\
\Dp(\Om,\mu) &= \{u\in L^1\loc(\Om,\mu): |\grad u|\in L^p(\Om,\mu)\},
\end{align*}
where $\grad u$ is the distributional gradient  of $u$ and
$d\mu=w\,dx$ with a $B_p$ weight $w$, i.e.\ $w^{1/(1-p)}\in L^1\loc(\R^n)$.
Such weights were introduced in Kufner--Opic~\cite{kuf-opic} and
are suitable for Sobolev spaces based on the distributional gradient $\grad u$.
See Ambrosio--Pinamonti--Speight~\cite{AmPiSp} and
Zhikov~\cite{Zhikov} for a discussion of various types of weighted Sobolev
spaces in $\R^n$ and on metric spaces.

\begin{thm}[Precise embeddings based on integrability conditions]   
\label{thm-w-al-v-be}
Let $w,v$ be weights such that $w, w^{-\al},v^\be \in L^1\loc(\R^n)$
with some  $\al>0$ and $\be>1$.
Assume that 
\[
\max \biggl\{ n\Bigl(\frac{1}{\al} + \frac{1}{\be}\Bigr),
1+\frac{1}{\al} \biggr\} \le p < n \Bigl(1+ \frac{1}{\al} \Bigr),
\]
and let $d\mu=w\,dx$ and $d\nu=v\,dx$.
Then the embeddings    
\[
\Wp(\R^n,\mu)\longemb L^q(E,\nu)
\quad \text{and} \quad
\Dp(\R^n,\mu)\cap L^1(E,\nu)\longemb L^q(E,\nu)
\]
are compact for every bounded 
Lebesgue measurable set $E\subset\R^n$ and all exponents
\begin{equation} \label{eq-range-for-q}
1\le q   \le  \frac{np (1-1/\be)}{(n-p)+n/\al}.
\end{equation}
\end{thm}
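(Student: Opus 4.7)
The plan is to apply Theorem~\ref{thm-precompact}\,\ref{it-p-le-q} using cubes of vanishing side as the covering family, after reducing to the critical exponent $q_0 := np(1-1/\beta)/[(n-p)+n/\alpha]$. Since $\beta>1$, Jensen's inequality gives $v\in L^1\loc$, hence $\nu(E)<\infty$ for bounded $E$, and H\"older's inequality yields the bounded embedding $L^{q_0}(E,\nu)\embed L^q(E,\nu)$ for every $1\le q\le q_0$; since compactness composed with boundedness remains compact, it suffices to establish the compact embedding at $q=q_0$. A direct algebraic manipulation shows that the hypothesis $p\ge n(1/\alpha+1/\beta)$ is equivalent to $q_0\ge p$, placing us in part~\ref{it-p-le-q}.

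For each $m\ge1$, I would take $E_i=E_i'=Q_{i,m}$ as finitely many cubes of side $r_m:=1/m$, with centres $x_{i,m}\in E$ and essentially disjoint interiors, covering $E$; after discarding the ones with $\nu(Q_{i,m}\cap E)=0$, Assumptions~\ref{ass-comp} hold with a dimensional overlap $N_m\le C(n)$. Setting $p_0=\alpha p/(1+\alpha)$ (so $p_0<n$ by the hypothesis $p<n(1+1/\alpha)$) and $q'=q\beta/(\beta-1)$ (so that $q'\le p_0^*=np_0/(n-p_0)$ precisely when $q\le q_0$), the classical Sobolev--Poincar\'e inequality on $Q_{i,m}$ reads
\[
\biggl(\int_{Q_{i,m}}|u-u_{Q_{i,m},dx}|^{q'}\,dx\biggr)^{1/q'}
\le C r_m^{1+n/q'-n/p_0}\biggl(\int_{Q_{i,m}}|\grad u|^{p_0}\,dx\biggr)^{1/p_0},
\]
the $r_m$-exponent vanishing exactly at $q=q_0$. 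Applying H\"older with exponents $(\beta,\beta')$ on the left to insert $v=(v^\beta)^{1/\beta}$, and with $(p/p_0,p/(p-p_0))$ on the right (using $p_0/(p-p_0)=\alpha$ so that the weight factor becomes $w^{-\alpha}$), I obtain the Poincar\'e type inequality~\eqref{eq-def-loc-PI} with $G_m(u)=|\grad u|$ and
\[
C_m^{(i)}= Cr_m^{1+n/q'-n/p_0}\biggl(\int_{Q_{i,m}}v^\beta\,dx\biggr)^{1/(q\beta)}\biggl(\int_{Q_{i,m}}w^{-\alpha}\,dx\biggr)^{1/(\alpha p)}.
\]

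To verify $C_m:=\sup_iC_m^{(i)}\to0$, I would use that $v^\beta,w^{-\alpha}\in L^1\loc$ are integrable over any fixed bounded neighbourhood of $E$, so by absolute continuity of the Lebesgue integral, $\sup_i\int_{Q_{i,m}}v^\beta\,dx$ and $\sup_i\int_{Q_{i,m}}w^{-\alpha}\,dx$ tend to $0$ uniformly as $r_m\to0$, while the $r_m$-factor contributes extra decay for subcritical $q$ and is harmlessly equal to $1$ at $q=q_0$. For the boundedness of $\{a_{Q_{i,m}}(u_n)\}_n$ required in Theorem~\ref{thm-precompact}, when $\Y=W^{1,p}(\R^n,\mu)$ I would take $a_{Q_{i,m}}(u)=u_{Q_{i,m},dx}$ and apply H\"older together with the lower bound $p\ge 1+1/\alpha$ (which forces $w^{-1/(p-1)}\in L^1\loc$) to bound $\int_{Q_{i,m}}|u|\,dx$ by the $W^{1,p}(\R^n,\mu)$-norm; when $\Y=D^p(\R^n,\mu)\cap L^1(E,\nu)$, by Remark~\ref{rem-choices} I would replace the average by $u_{Q_{i,m}\cap E,\nu}$, which is directly controlled by $\nu(Q_{i,m}\cap E)^{-1}\|u\|_{L^1(E,\nu)}$. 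The hardest part is the uniform decay of $C_m$ at the endpoint $q=q_0$, where the $r_m$-factor provides no help and the argument hinges entirely on uniform absolute continuity of $\int_{Q_{i,m}}v^\beta\,dx$ and $\int_{Q_{i,m}}w^{-\alpha}\,dx$ over cubes confined to a fixed bounded neighbourhood of~$E$.
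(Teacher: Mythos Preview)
Your proposal is correct and follows essentially the same route as the paper: reduce to the critical exponent, set $p_0=t=\alpha p/(1+\alpha)$, combine the unweighted $(t^*,t)$-Sobolev--Poincar\'e inequality with two H\"older factors $(\int w^{-\alpha})^{1/\alpha p}$ and $(\int v^\beta)^{1/\beta q}$, and then feed the resulting local Poincar\'e inequality into Theorem~\ref{thm-precompact}\,\ref{it-p-le-q} with a covering by small balls/cubes of bounded overlap. The only cosmetic differences are that the paper uses balls and establishes $C_m\to0$ via a compactness-of-$\overline{E}$ argument (finding for each $x$ a ball with $\widetilde{C}(2B_x)<1/m$ and extracting a finite subcover) rather than by directly invoking uniform absolute continuity of $\int v^\beta$ and $\int w^{-\alpha}$ over sets of small measure, and bounds the Lebesgue averages $u_{B,dx}$ using the same exponent $t$ instead of your $p'$; both variants work equally well.
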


A similar result can be proved for weighted Sobolev spaces in metric
spaces with a well-behaved underlying measure.
We shall not dwell upon such generalizations.
The assumptions on $\al$ and $p$ imply that $w$ is a $B_p$ weight.

Example~\ref{ex-optimal-emb} below demonstrates the  sharpness of 
the limiting exponent in~\eqref{eq-range-for-q}.
Note that the exponent $q$   
has the correct asymptotics  $p^*= np/(n-p)$, as $\al,\be\to\infty$.
Moreover, the proof shows that if $w\ge C>0$,  then the statement
of Theorem~\ref{thm-w-al-v-be} holds for $1\le q \le p^*(1-1/\be)$
and similarly, it holds for $1\le q\le np/((n-p)+n/\al)$ when $v\le C'$.
This means that in~\eqref{eq-range-for-q},
we may also allow the limiting cases $\al=\infty$ or $\be=\infty$, 
but not both at the same time.
This case is excluded because then 
both integrals in~\eqref{eq-C-tilde} would be replaced by $L^\infty$-norms of
$1/w$ and $v$, and thus, $\Ct(B)$ would not be small for small balls $B$.

\begin{proof}
The result will be deduced from Theorem~\ref{thm-precompact} with $X=\R^n$.
We start by a verification of Assumptions~\ref{ass-comp}.
It clearly suffices to consider   
\begin{equation*}  
q = \frac{np (1-1/\be)}{(n-p)+n/\al}.
\end{equation*}
The covering family will consist of suitably chosen balls $E_i=E_i'=B$,
whose precise construction we postpone until the end of the proof.
To obtain the Poincar\'e type inequality~\eqref{eq-def-loc-PI} for
such balls,
let $u\in\Dp(\R^n,\mu)$ be arbitrary. 
The H\"older inequality implies that for $t=\al p/(\al+1)$ and every
ball $B\subset\R^n$,
\begin{equation}   \label{eq-est-w-al}
\biggl( \int_B |\grad u|^t\,dx \biggr)^{1/t}
\le \biggl( \int_B |\grad u|^p w\,dx \biggr)^{1/p}
           \biggl( \int_B w^{-\al}\,dx \biggr)^{1/\al p}
\end{equation}
and hence $u\in D^t(B,dx)$. 
Note that $1\le t<n$ and $q=t^*(1-1/\be)<t^*$, where 
\[
t^*=\frac{nt}{n-t} = \frac{np }{(n-p)+n/\al}    
\] 
is the Sobolev exponent associated with $t$.
Another use of the H\"older inequality, together with the usual
$(t^*,t)$-Sobolev--Poincar\'e inequality for the Lebesgue measure
in $\R^n$ and~\eqref{eq-est-w-al}, then yields 
for all balls $B\subset\R^n$ that
\begin{align}   \label{eq-VL-nu-HL-mu}
&\biggl( \int_B |u-u_{B,dx}|^q \,d\nu \biggr)^{1/q} 
\le \biggl( \int_B |u-u_{B,dx}|^{t^*} \,dx \biggr)^{1/t^*}
          \biggl( \int_B v^{\be}\,dx \biggr)^{1/\be q}  \\ 
& \quad \quad \quad
\le C(n,t) |B|^{1/t^*} \diam(B)\biggl( \frac{1}{|B|}\int_B |\grad u|^{t}\,dx 
          \biggr)^{1/t} \biggl( \int_B v^{\be}\,dx \biggr)^{1/\be q} \nonumber \\
&\quad \quad \quad
 \le C'(n,t) \biggl( \int_B |\grad u|^p w\,dx \biggr)^{1/p}
     \biggl( \int_B w^{-\al}\,dx \biggr)^{1/\al p}
     \biggl( \int_B v^{\be}\,dx \biggr)^{1/\be q}, \nonumber  
\end{align}
where $C(n,t)$ comes from the $(t^*,t)$-Sobolev--Poincar\'e inequality in $\R^n$.
The triangle inequality
allows us to replace $u_{B,dx}$  in~\eqref{eq-VL-nu-HL-mu}
by $u_{B,\nu}$ at the cost of an additional factor 2 on the right-hand side.
We have thus shown that for every ball $B\subset\R^n$,
\begin{equation}   \label{eq-PI-with-aB}
\biggl( \int_B |u-a_B(u)|^q \,d\nu \biggr)^{1/q} 
  \le \Ct(B) \biggl( \int_B |\grad u|^p w\,dx \biggr)^{1/p},
\end{equation}
where both possibilities $a_B(u)= u_{B,dx}$ and $a_B(u)= u_{B,\nu}$ are allowed and
\begin{equation}  \label{eq-C-tilde}
 \Ct(B) := 2C'(n,t)
\biggl( \int_B w^{-\al}\,dx \biggr)^{1/\al p}
     \biggl( \int_B v^{\be}\,dx \biggr)^{1/q-1/t^*}.
\end{equation}
Let $E$ be as in the statement of the theorem.
For each $m=1,2,\ldots$, we find a suitable covering family satisfying~\eqref{eq-CMN-to-0}. 
Since $w^{-\al},v^\be \in L^1\loc(\R^n)$,
we can for every $x\in \R^n$ find
a ball $B_x\ni x$ such that $\Ct(2B_x)<1/m$.
By compactness, the closure $\itoverline{E}$ can be covered by finitely many 
such balls $B_{x_j}$ with radii~$\rho_j$.
Choose $r_m\le \min\{1/m,\min_j\rho_j\}$.
Use the Hausdorff maximality principle as in
Remark~\ref{rem-Hausdorff-max-princ} and
Lemma~\ref{lem-Hausdorff-max-princ} to
cover $E$ by balls $B_i$ with radius $r_m$, centres in $E$
and a bounded overlap independent of $m$.
Since $r_m\le\rho_j$, we see that each $B_{i}$ is contained in some $2B_{x_j}$
and hence $\Ct(B_{i})\le \Ct(2B_{x_j})<1/m$.
These balls form a covering family for $E$ and~\eqref{eq-PI-with-aB}
shows that the Assumptions~\ref{ass-comp} are satisfied with 
$E_i=B_{i}\cap E$, $E_i'=B_i$,
\[
C_m = \max_{i} \Ct(B_{i})< 1/m, \quad
a_{E_i}(u)= u_{B_i,dx} \quad \text{and}  \quad a_{E_i}(u)= u_{B_i,\nu}.
\]
The condition $q\ge p$ is guaranteed by $p\ge n(1/\al+1/\be)$.
Finally, as in \eqref{eq-est-w-al},
\[
|u_{B_i,dx}| \le \biggl( \vint_{B_i} |u|^t \,dx \biggr)^{1/t}
   \le \biggl( \vint_{B_i} |u|^p w\,dx \biggr)^{1/p}
                    \biggl( \vint_{B_i} w^{-\al}\,dx \biggr)^{1/\al p},
\]
and thus the sequence of integral averages
$a_{E_i}(u_n):=(u_n)_{B_i,dx}$ is bounded whenever
$\{u_n\}_{n=1}^\infty$ is bounded in $\Wp (B,\mu)$.
If instead $\{u_n\}_{n=1}^\infty$ is bounded in $\Dp(B,\mu)\cap L^1(B,\nu)$, 
we use $a_{E_i}(u)= u_{B_i,\nu}$.
Since the overlap is independent of $m$, 
we see that~\eqref{eq-CMN-to-0} holds and
Theorem~\ref{thm-precompact} concludes the proof.
\end{proof}

\begin{remark}  \label{rem-limiting-case}
Let the notation be as in the proof of Theorem~\ref{thm-w-al-v-be}. 
The H\"older inequality shows, as in \eqref{eq-est-w-al}, 
that the following embeddings are bounded:
\[
\Wp (\R^n,\mu) \longemb  W^{1,t}(B,dx)
\quad \text{and} \quad
L^\tau(E)  \longemb L^{\tau(1-1/\be)}(E,\nu)
\]
for every ball $B\subset\Rn$ and every $\tau\ge1$.
Hence, by directly using the classical 
compact embedding $W^{1,t}(B,dx)\embed L^{\tau}(B)$,
we obtain compactness of the embedding
\[
\Wp (\R^n,\mu)\longemb L^{\tau(1-1/\be)}(E,d\nu)
\quad \text{whenever }\tau
< t^* {= \frac{n\al p}{(n-p)\al+n}},
\]
i.e.\ for $q<t^*(1-1/\be)$.
On the other hand, Theorem~\ref{thm-w-al-v-be}
makes it possible to reach also the limiting exponent $q=t^*(1-1/\be)$.
The following example shows that it
is optimal among all $q$'s for which one has a compact embedding into
$L^q(E,\nu)$.

Other results concerning compact embeddings for limiting exponents were 
under certain assumptions recently obtained 
in Gaczkowski--G\'orka--Pons~\cite{GaGoPons}. 
\end{remark}

\begin{example}   
[Optimal compactness]
\label{ex-optimal-emb}
Let $n=p=q=\al=\be=2$ in Theorem~\ref{thm-w-al-v-be} and set 
for $x\in\R^2$,
\begin{equation*} 
w(x) = \begin{cases} |x|\log(1/|x|), & \text{if }0<|x|<\tfrac12, \\
         \tfrac12 \log2, & \text{otherwise,}   \end{cases}
\qquad \text{and} \qquad v(x) = \frac{1}{w(x)}.
\end{equation*}
Elementary calculations show that the assumptions in 
Theorem~\ref{thm-w-al-v-be} are satisfied and that
for sufficiently small $r>0$,
\[
\mu(B(0,r)) \simeq r^3\log\frac1r 
\quad \text{and} \quad
\nu(B(0,r)) \simeq \frac{r}{\log(1/r)}.
\]
From Theorem~\ref{thm-w-al-v-be}  we deduce that
the embedding $W^{1,2}(\R^n,\mu)\embed L^2(B(0,1),\nu)$ is compact.
At the same time, for $q>2$ and sufficiently small $r>0$,
\[
\frac{r\nu(B(0,r))^{1/q}}{\mu(B(0,r))^{1/2}} 
\simeq \frac{r^{1/q-1/2}}{(\log(1/r))^{1/q+1/2}} \to \infty, \quad \text{as } r\to0.
\]
Proposition~\ref{prop-comp-imp-M-to0} below then 
shows that there is no compact embedding
$W^{1,2}(\R^n,\mu)\embed L^q(B(0,1),\nu)$ for any $q>2$.
In fact, $L^2(B(0,1),\nu)$ is optimal among $L^q$ spaces 
both for bounded and for compact embeddings.
Note that the exponent 2 in the conditions 
$w^{-2}\in L^1\loc(\R^2)$ and $v^2\in L^1\loc(\R^2)$ 
cannot be replaced by any larger exponent.

Obvious modifications can be done for other exponents and dimensions.
It is also possible to construct weights that are singular,  in a similar way 
at a dense set. 
\end{example}

\begin{remark}   \label{rem-nondoubl-meas}
Examples of nondoubling measures in $\R$ and $\R^2$,
with singular parts and  good enough $C_m$ for Theorem~\ref{thm-precompact}
can be found in
Bj\"orn--Bj\"orn~\cite[pp.~206--207 and Proposition~10.6]{BBnonopen}.
See also Alvarado--Haj\l asz~\cite[Example~4]{AlvHaj} for a nondoubling weighted measure on
$[0,\infty)$ that supports a $(q,p)$-Poincar\'e inequality 
(as in~\eqref{eq-(q,p)-PI}) 
precisely when $q\le p$.
At the same time, \cite[Theorem~1]{AlvHaj} shows in the
generality of metric spaces that measures supporting 
$(q,p)$-Poincar\'e inequalities with $q>p$ must be doubling.
\end{remark}

\subsection{Fractional Sobolev spaces with nonlocal gradients}
\label{sect-frac-Sob-nonlocal}

Let $\al>0$, $r_0>0$ and $\tau\ge1$ be fixed in this section.

\begin{deff}[\emph{Spaces based on Poincar\'e inequalities}]
\label{deff-al-poinc-spa}
The \emph{$E$-restricted Poincar\'e Sobolev space} $P^{\al,p}_{\tau,E}(X,\mu)$
consists  of all $u\in L^1\loc(X,\mu)$ which 
satisfy the \p-Poincar\'e inequality
\begin{equation} \label{def-PI-ineq-spc}
   \vint_{B} |u-u_{B,\mu}| \,\dmu
        \le r^\al \biggl( \vint_{\tau B} g^{p} \,\dmu \biggr)^{1/p}
\end{equation}
for some $g\in L^p(X,\mu)$
and all  balls $B=B(x,r)\subset X$ centred in $E$ and of radius
  at most $r_0$, where we implicitly assume that
$0<\mu(B)<\infty$.
The space $P^{\al,p}_{\tau,E}(X,\mu)$ is equipped with the seminorm
$\inf_g \|g\|_{L^p(X,\mu)}$, where the infimum is taken over all $g$ 
satisfying~\eqref{def-PI-ineq-spc}.
When $E=X$, we omit the subscript $E$ and write
  $P^{\al,p}_\tau$.
\end{deff}

The space $P^{\al,p}_\tau(X,\mu)$ was  for $\al=1$ (with $E=X$
  and all $0<r<\infty$ in~\eqref{def-PI-ineq-spc})
introduced in Koskela--MacManus~\cite{KoMc},
while the general case $\al>0$ was studied 
in Heikkinen--Koskela--Tuominen~\cite[Section~3]{HeiKosTuo}.
Note that 
\begin{equation}  \label{eq-P-subset-P,E}
P^{\al,p}_\tau(X,\mu)\subset P^{\al,p}_{\tau,E}(X,\mu) \quad \text{for
  all $E\subset X$}
\end{equation}
and that $P^{\al,p}_{\tau,E}(X,\mu)$ is in general \emph{not} the same as $P^{\al,p}_\tau(E,\mu)$.
If $\mu$ satisfies the doubling
  condition~\eqref{eq-def-doubl-on-E} in Assumptions~\ref{ass-doubl}
  for balls centred in $E$ and of radius at most $r_0$, 
then also (upon replacing $r_0$ in $P^{\al,p}_{\tau_2,E}$ with $r_0/\tau_2$)
\begin{equation}  \label{eq-P-tau-subset-tau}
P^{\al,p}_{\tau_1,E}(X,\mu)\subset P^{\al,p}_{\tau_2,E}(X,\mu)
\quad \text{whenever $\tau_1\le\tau_2$.}
\end{equation}

Next, we recall the definition of Haj\l asz spaces,
which were for $\al=1$ introduced by Haj\l asz~\cite{Haj96}.
Similar spaces with $\al\ne1$ were considered e.g.\ 
in~\cite{HeiKosTuo}, Koskela--Yang--Zhou~\cite{KoYaZh10}
and implicitly already in Haj\l asz--Martio~\cite{HajMar}.

\begin{deff} 
A nonnegative Borel function $g$ is a 
\emph{Haj\l asz $\al$-gradient}  of a function $u:X\to\R$ if for 
$\mu$-a.e.\ $x,y\in X$,
\begin{equation}   \label{eq-def-Haj-gr}
|u(x)-u(y)| \le d(x,y)^\al (g(x)+g(y)).
\end{equation}
The \emph{Haj\l asz space} $M^{\al,p}(X,\mu)$
consists of all $u\in L^p(X,\mu)$ for which there exists
$g\in L^p(X,\mu)$ satisfying~\eqref{eq-def-Haj-gr}.
It is equipped with the norm
\[
\| u\|_{M^{\al,p}(X,\mu)} := \| u\|_{L^p(X,\mu)} + 
   \inf \{ \| g\|_{L^p(X,\mu)} : g \text{  is as in~\eqref{eq-def-Haj-gr}} \}. 
\]
\end{deff}

By Haj\l asz~\cite[Theorems~2.1 and~2.2]{Haj-ContMat},
the spaces $P^{1,p}_\tau(\R^n,dx)$ and $M^{1,p}(\R^n,dx)$
coincide with the usual Sobolev space $W^{1,p}(\R^n)$, $p>1$.
In this case, $g$ in \eqref{eq-def-Haj-gr} can be the Hardy--Littlewood
maximal function of $|\grad u|$,
while $|\grad u|$ itself will do as $g$ in \eqref{def-PI-ineq-spc}.
The equality between the three types of spaces
is true also for sufficiently smooth bounded Euclidean domains,
cf.\ \cite[p.~196]{Haj-ContMat},
but in general, the Haj\l asz space $M^{1,p}(\Om,dx)$ can be substantially
smaller than $W^{1,p}(\Om)$, e.g.\ for the slit disc in the plane.
In fact, by Proposition~1 in Haj\l asz--Koskela--Tuominen~\cite{HajKoTuo},
for any Euclidean domain $\Om$
satisfying a measure density condition, 
the equality $M^{1,p}(\Om,dx)=W^{1,p}(\Om)$ is equivalent to $\Om$ being a
$W^{1,p}$-extension domain.

A repeated integration of \eqref{eq-def-Haj-gr} over a ball $B$ 
shows that for all $u\in M^{\al,p}(X,\mu)$,
\begin{equation}  \label{eq-p,p-PI-M}
\int_B |u(x)-u_{B,\mu}|^p\,d\mu(x) 
\le \int_B \vint_B |u(x)-u(y)|^p\,d\mu(y)\,d\mu(x)
\le Cr^{\al p} \int_B g^p\,d\mu,
\end{equation}
where $r$ is the radius of $B$. 
In particular,
\begin{equation}  \label{eq-M-subset-P}
M^{\al,p}(X,\mu) \subset P^{\al,p}_1(X,\mu) \cap L^p(X,\mu).
\end{equation}
A detailed analysis of the spaces $M^{1,p}$ and $P^{1,p}_\tau$,
as well as comparisons with other types of Sobolev spaces 
can be found in~\cite{Haj-ContMat}.

\begin{remark}[\emph{Nontriviality of $M^{\al,p}$ for $\al>1$}]   
\label{rem-walking-dim}
If $\Om\subset\Rn$ is a domain and $\al>1$, then 
Proposition~2 in Br\'ezis~\cite{Brezis}  
implies that every $u\in M^{\al,p}(\Om,dx)$  is constant. 
On the other hand, for more general sets with few rectifiable curves, $M^{\al,p}(X,\mu)$
can be nontrivial even for $\al>1$, see 
\cite[Example~6.3]{HeiKosTuo} and Hu~\cite{Hu}.
In fact,  it is easily verified  that  if 
$\hat{I}$ is the unit interval $(0,1)$ equipped with the snowflaked metric
  $\hat{d}(x,y):=|x-y|^{1/2}$, then
  $M^{2,p}(\hat{I},dx)=M^{1,p}((0,1),dx)
=W^{1,p}((0,1))$ is the usual Sobolev space when $p>1$.

Similarly,  the fractional spaces $M^{\al,p}(X,\mu)$ and $P^{\al,p}_\tau(X,\mu)$
for $0<\al<1$
coincide with the spaces $M^{1,p}(X_\al,\mu)$ and $P^{1,p}_{\tau^\al}(X_\al,\mu)$, 
respectively, where $X_\al$ denotes the space $X$ 
equipped with the snowflaked metric $d_\al(x,y):=d(x,y)^\al$.
\end{remark}

Theorem~\ref{thm-precompact} 
implies the following simple compactness results for  $P^{\al,p}_{\tau,E}$ 
and $M^{\al,p}(X,\mu)$ with very  general measures. 
Note that  $\mu$ need not be doubling.

\begin{prop}
[Embeddings for $P^{\al,p}_{\tau,E}$ with the same measure]
\label{prop-P-al-p-arb-mu}
Let $E\subset X$ be bounded Lebesgue measurable subsets of $\R^n$, 
equipped with the measure
$d\mu(x)=w(x)\,dx$, where $0<w\in L^t(X)$ for some $t\ge1$.
Then the embedding
\begin{equation*}   
P^{\al,p}_{\tau,E}(X,\mu)\cap L^1(X,\mu) \longemb L^1(E,\mu)
\end{equation*}
is compact for all $\al \ge n(1-1/p)/t$.
In particular, for the classical exponents
$p=2$ and $\al=1$, this holds for 
any weight $0<w\in L^1\loc(\R^2)$.
\end{prop}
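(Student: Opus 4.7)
The plan is to invoke Theorem~\ref{thm-precompact} applied to the class
$\Y=P^{\al,p}_{\tau,E}(X,\mu)\cap L^1(X,\mu)$, with $\nu=\mu$ and the target exponent $q=1$. Since $q=1\le p$, I will use case~\ref{it-p-ge-q} when $p>1$ and case~\ref{it-p-le-q} (trivially) when $p=1$. The heart of the argument is to construct, for each $m$, a covering family whose constants behave well enough in the small--scale asymptotics governed by the integrability exponent $t$.

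First I build the covering. Choose $r_m\downarrow 0$ with $r_m\le r_0$, and use the Hausdorff maximality principle (as in Remark~\ref{rem-Hausdorff-max-princ}) to pick a maximal pairwise disjoint family of half-balls $\tfrac12 B(x_i,r_m)$ with centres $x_i\in E$. The balls $B_i:=B(x_i,r_m)$ then cover $E$, and by Lemma~\ref{lem-Hausdorff-max-princ} applied to Lebesgue measure on $\R^n$ (which is doubling), the $\tau$-dilates $\tau B_i$ have overlap $N_m\le N$ independent of $m$. Since $E$ is bounded, a standard volume count gives $K_m\le C r_m^{-n}$ with $C$ depending on $\diam(E)$ and $n$. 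I set $E_i=B_i$, $E_i'=\tau B_i$, $a_{E_i}(u)=u_{E_i,\mu}$, and let $G_m(u)=g$ be any $L^p$ function realizing the defining Poincar\'e inequality~\eqref{def-PI-ineq-spc} for $u\in\Y$; then $\sup_n\|G_m(u_n)\|_{L^p(X,\mu)}$ is controlled by the $P^{\al,p}_{\tau,E}$-seminorm of the sequence $\{u_n\}$.

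Multiplying \eqref{def-PI-ineq-spc} through by $\mu(E_i)$ and using $\mu(E_i)\le\mu(E_i')$ yields the local Poincar\'e type inequality~\eqref{eq-def-loc-PI} with
\[
C_m\le r_m^{\al}\sup_i \mu(E_i)^{1-1/p}.
\]
Since $w\in L^t(X)$ with $t\ge 1$, H\"older's inequality gives
\[
\mu(E_i)=\int_{E_i} w\,dx\le \|w\|_{L^t(E_i)}\,|E_i|^{1-1/t}\le C r_m^{n(1-1/t)}\|w\|_{L^t(E_i)}.
\]
Combining this with the bounds on $K_m$ and $N_m$, the key quantity in \eqref{eq-CMN-to-00} satisfies
\[
C_m K_m^{1-1/p} N_m^{1/p}\le C r_m^{\al-n(1-1/p)/t}\Bigl(\sup_i\|w\|_{L^t(E_i)}\Bigr)^{1-1/p}.
\]
When $\al>n(1-1/p)/t$ the explicit power of $r_m$ drives this to zero. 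The main obstacle, and the only subtle point, is the limiting case $\al=n(1-1/p)/t$: here the $r_m$ factor vanishes, so I instead use the absolute continuity of the integral $\int w^t\,dx$. Because $\bigcup_i E_i'$ lies in a fixed bounded Euclidean region, $\sup_i\|w\|_{L^t(E_i)}\to 0$ as $r_m\to 0$, which again forces the product to zero. The case $p=1$ is degenerate: condition \eqref{eq-CMN-to-0} reduces to $C_mN\le C r_m^{\al}\to 0$, which holds for any $\al>0$, consistent with $n(1-1/p)/t=0$.

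Finally, for any sequence $\{u_n\}$ bounded in $\Y$ (hence in $L^1(X,\mu)$), the averages satisfy $|a_{E_i}(u_n)|\le \mu(E_i)^{-1}\|u_n\|_{L^1(X,\mu)}$, which is bounded for each fixed pair $(m,i)$ since $w>0$ makes $\mu(E_i)>0$. All hypotheses of Theorem~\ref{thm-precompact} are therefore met, yielding a subsequence convergent in $L^1(E,\mu)$ and proving the stated compactness. The ``In particular'' assertion follows by specializing to $n=p=2$, $\al=1$, $t=1$, which is precisely the critical case handled by the absolute continuity argument.
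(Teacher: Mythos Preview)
Your proof is correct and follows essentially the same route as the paper: cover $E$ by balls of radius $r_m$ with centres in $E$ and uniformly bounded overlap, read off \eqref{eq-def-loc-PI} with $q=1$ from the defining inequality~\eqref{def-PI-ineq-spc} to get $C_m\le r_m^{\al}\sup_i\mu(E_i)^{1-1/p}$, estimate $\mu(E_i)$ via H\"older using $w\in L^t$, and invoke Theorem~\ref{thm-precompact}\,\ref{it-p-ge-q}. The paper combines your strict and borderline cases into one, noting simply that $\al-n(1-1/p)/t\ge0$ while absolute continuity of $\int w^t\,dx$ forces $\sup_{z\in E}\bigl(\int_{B(z,r_m)\cap X}w^t\,dx\bigr)^{(1-1/p)/t}\to0$; your separate treatment of $\al=n(1-1/p)/t$ and of $p=1$ is equally valid and perhaps a bit clearer.
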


A similar statement, with $n$ replaced by $d$, 
holds if $X$ is (a measurable subset of) a bounded 
self-similar Cantor set in $\R^n$ of Hausdorff dimension $0<d<n$, 
equipped with $d\mu=w\,d\La_d$, where $\La_d$ is the $d$-dimensional
Hausdorff measure.

\begin{proof}
For any $r_m>0$, $m=1,2,\ldots$, a simple geometrical argument shows that 
$X$ can be covered by $K_m\le C r_m^{-n}$ many
balls $B(x_i,r_m)$ with $x_i\in E$
and bounded overlap $N_m=N$ depending only on the dimension $n$.
Definition~\ref{deff-al-poinc-spa} implies that functions in 
$P^{\al,p}_{\tau,E}(X,\mu)$ satisfy the Poincar\'e type inequality
\eqref{eq-def-loc-PI} with $q=1$, $\nu=\mu$, $E_i=B(x_i,r_m)\cap X$,
$E'_i=B(x_i,\tau r_m)\cap X$, integral averages $a_{E_i}(u)=u_{E_i,\mu}$ and
\[
C_m= r_m^\al \sup_{z\in E} \mu(B(z,r_m)\cap X)^{1-1/p}.
\]
H\"older's inequality implies that
\begin{equation*}  
\mu(B(z,r_m)\cap X) \le C r_m^{n(1-1/t)} \biggl( \int_{B(z,r_m)\cap X} w(x)^t\,dx \biggr)^{1/t}
\end{equation*}
and hence
\[
C_m N_m^{1/p} K_m^{1-1/p} \le C r_m^{\al - n(1-1/p)/t}
   \sup_{z\in E} \biggl( \int_{B(z,r_m)\cap X} w(x)^t\,dx \biggr)^{(1-1/p)/t}.
\]
Absolute continuity of the integral shows that the 
last supremum tends  to zero as $r_m\to0$.
Since $\al - n(1-1/p)/t\ge0$ and the integral averages $u_{E_i,\mu}$
are bounded for each fixed $E_i$, Theorem~\ref{thm-precompact}, applied to
$E=X$, concludes the proof.
\end{proof}

Because of the inclusions~\eqref{eq-P-subset-P,E} and~\eqref{eq-M-subset-P}, 
Proposition~\ref{prop-P-al-p-arb-mu}
applies also to the Haj\l asz space $M^{\al,p}(X,\mu)$.
The next result gives a compact embedding also into $L^p$.
It partially generalizes (to $\al\ne1$) Theorem~2 
in Ka\l amajska~\cite{agnieszka}.

\begin{prop}  
[Embeddings into $L^p$ with the same  measure] 
\label{prop-q=p-M-al}
Assume that $E$ is totally bounded and that one of 
the conditions \ref{it-M(r)-X} and \ref{it-M(r)-mu}
in Lemma~\ref{lem-Hausdorff-max-princ} holds with $\la=1$,
$M_1(r) \le Cr^{-\theta}$ and $\theta\ge0$, for all balls centred in 
$E$ and of radius at most $r_0>0$. 
Assume also that $0<\mu(B)<\infty$ for all such balls.
Then the embedding 
\[
M^{\al,p}(X,\mu)\longemb L^p(E,\mu)
\]
is compact when $\alpha >\theta/p$.

Moreover, if there exists a totally bounded set $\emptyset\ne E_0\subset X$ such that
$\mu|_{E_0}$ is not a finite sum of atoms,
then $M^{\al,p}(X,\mu)\ne L^p(X,\mu)$ in this case. 
\end{prop}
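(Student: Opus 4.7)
I will apply Theorem~\ref{thm-precompact}\ref{it-p-le-q} with $q=p$ and $\nu=\mu|_E$, and construct the covering family from balls of radius $r_m\to0$. Since $E$ is totally bounded, for each $m$ with $r_m\le r_0$ we can use Remark~\ref{rem-Hausdorff-max-princ} to obtain a \emph{finite} collection of balls $B_i=B(x_{i,m},r_m)$, $x_{i,m}\in E$, such that $\{\tfrac12 B_i\}$ are pairwise disjoint and $\{B_i\}$ covers $E$; finiteness comes from the fact that totally many disjoint balls of radius $r_m/2$ with centres in a totally bounded set must be finite. I set $E_i=E'_i=B_i$ (discarding those $i$ for which $\mu(B_i\cap E)=0$, which is harmless), $a_{E_i}(u)=u_{B_i,\mu}$, and $G_m(u)=g$, where $g\in L^p(X,\mu)$ is a Haj\l asz $\al$-gradient of $u$. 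By Lemma~\ref{lem-Hausdorff-max-princ} with $\la=1$, the overlap of the family $\{B_i\}$ satisfies $N_m\le M_1(r_m)\le C r_m^{-\theta}$.

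The local Poincar\'e type inequality~\eqref{eq-def-loc-PI} with exponent $q=p$ is immediate from~\eqref{eq-p,p-PI-M}: for each $B_i$,
\[
\biggl(\int_{B_i\cap E}|u-u_{B_i,\mu}|^p\,d\mu\biggr)^{1/p}\le
\biggl(\int_{B_i}|u-u_{B_i,\mu}|^p\,d\mu\biggr)^{1/p}\le Cr_m^\al\biggl(\int_{B_i}g^p\,d\mu\biggr)^{1/p},
\]
so we may take $C_m=Cr_m^\al$. Combining this with the overlap bound gives
\[
C_mN_m^{1/p}\le C r_m^{\al-\theta/p}\longrightarrow0\quad\text{as }m\to\infty,
\]
since $\al>\theta/p$. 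If $\{u_n\}$ is bounded in $M^{\al,p}(X,\mu)$, then both $\|u_n\|_{L^p(X,\mu)}$ and the norms of the associated Haj\l asz gradients $g_n$ are uniformly bounded, so $\sup_n\|G_m(u_n)\|_{L^p(X,\mu)}<\infty$ and~\eqref{eq-CMN-to-0} is satisfied. Moreover, H\"older's inequality together with $0<\mu(B_i)<\infty$ yields $|a_{E_i}(u_n)|\le\mu(B_i)^{-1/p}\|u_n\|_{L^p(X,\mu)}$, bounded for each fixed $i$. Theorem~\ref{thm-precompact}\ref{it-p-le-q} then extracts a subsequence convergent in $L^p(E,\mu)$, proving the compactness claim.

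For the final assertion, suppose for contradiction that $M^{\al,p}(X,\mu)=L^p(X,\mu)$; by the closed graph theorem these spaces then coincide with equivalent norms. Applying the compactness result just proved with $E:=E_0$, the embedding $L^p(X,\mu)\embed L^p(E_0,\mu)$ is compact. Precomposing with the isometric extension-by-zero $L^p(E_0,\mu)\embed L^p(X,\mu)$ produces a compact identity operator on $L^p(E_0,\mu)$, forcing the latter space to be finite-dimensional. This contradicts the assumption that $\mu|_{E_0}$ is not a finite sum of atoms. The only substantive step is the overlap estimate from Lemma~\ref{lem-Hausdorff-max-princ}, which is what converts the failure of doubling (encoded in $\theta>0$) into the loss $r_m^{-\theta}$ that must be absorbed by the smoothness gain $r_m^\al$; everything else is bookkeeping.
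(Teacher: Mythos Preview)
Your proof is correct and follows essentially the same route as the paper's: cover $E$ by finitely many balls of radius $r_m$ via Remark~\ref{rem-Hausdorff-max-princ}, bound the overlap by $N_m\le Cr_m^{-\theta}$ via Lemma~\ref{lem-Hausdorff-max-princ}, use the $(p,p)$-Poincar\'e inequality~\eqref{eq-p,p-PI-M} to get $C_m=Cr_m^\al$, and apply Theorem~\ref{thm-precompact} with $q=p$; for the last part you invoke the closed graph theorem explicitly where the paper is slightly terser, but both arguments conclude that $L^p(E_0,\mu)$ would be compactly embedded in itself and hence finite-dimensional by Riesz's lemma.
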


\begin{proof}
Remark~\ref{rem-Hausdorff-max-princ} 
and Lemma~\ref{lem-Hausdorff-max-princ} with $\la=1$,
provide us for any $r_m>0$, $m=1,2,\ldots$,
with a covering family $\{E_i\}_{i=1}^{K_m}$, 
consisting of finitely many balls of radius $r_m\le r_0$ and with overlap
$N_m\le Cr_m^{-\theta}$. 
The statement about compactness now follows 
from~\eqref{eq-p,p-PI-M} and Theorem~\ref{thm-precompact} 
with $q=p$, $C_m=Cr_m^\al$, $G_r(u)=g$ and the integral averages 
$a_{E_i}(u)= u_{E_i,\mu}$, whose boundedness is easily justified. 

Finally, if $M^{\al,p}(X,\mu)= L^p(X,\mu)$ and $E_0\subset X$ is as in
the statement of the proposition, 
then $L^p(E_0,\mu)$ is continuously embedded in $L^p(X,\mu)$,
by means of the zero extension, and hence compactly embedded in
itself, because of the first part of the proposition.
This implies that  $L^p(E_0,\mu)$  must be
finite-dimensional, by the Riesz lemma 
\cite[Theorem~2.5-5]{kreysz}, i.e.\ $\mu|_{E_0}$ is a finite sum of atoms.
\end{proof}

\section{Derivation of Poincar\'e type inequalities}

\label{sect-derive-PI}

Next, we prove the Poincar\'e
type inequalities~\eqref{eq-def-loc-PI} and~\eqref{eq-def-loc-PI-bdd}  
for balls
in spaces with a good domain measure~$\mu$.
This generalizes Theorem~7 in Bj\"orn~\cite{FennAnn} to measures restricted 
to certain (possibly lower-dimensional) subsets.
The proof has been 
inspired by Haj\l asz--Koskela~\cite[Theorem~5.3]{HaKobook}
and Heinonen--Koskela~\cite[Lemma~5.15]{HeKo98}.

\begin{thm}[Self-improvement of Poincar\'e inequalities]    
\label{thm-two-weight-PI-eps}
Let $1\le p<q<\infty$, $r_0>0$,  $\al>0$ and $\la\ge1$.
Assume that the measures $\mu$ and $\nu$ satisfy the doubling conditions~\eqref{eq-def-doubl-on-E} and
  \eqref{eq-dim-est-sig-E} in
Assumptions~\ref{ass-doubl} for all balls centred in $E$ and with radius
at most~$r_0$.
Let $u\in L^1\loc(X,\mu)$ be such that  
\begin{equation}   \label{eq-def-Leb-pt}
u(x)=\lim_{r\to0} \vint_{B(x,r)} u\,d\mu
\quad \text{for $\nu$-a.e.\  $x\in E$}
\end{equation}
and, for some function $g\in L^p\loc(X,\mu)$,
the \p-Poincar\'e  inequality~\eqref{eq-def-abstr-PI}
holds with dilation $\la\ge1$ on all balls $B$ centred in $E$ and with radius at most $r_0$.

If the local Poincar\'e constant on $E$,
\begin{equation}  \label{eq-def-sup-Theta}
\Theta_{q,\la}(r):=\sup_{0<\rho\le r}
\sup_{x\in E} \frac{\rho^\al \nu(B(x,\rho))^{1/q}}{\mu(B(x,\la \rho))^{1/p}}, 
\quad \text{where }0<r\le r_0,
\end{equation}
satisfies $\Theta_{q,\la}(r_0)<\infty$,
then the following are true with $C_{q,\la}>0$ 
depending only on $p$, $q$, $\mu$, $\nu$, $\al$, $\la$ and $\de$, but not on 
$x$, $r$, $u$ and~$g$\/{\rm:}
\begin{enumerate}
\renewcommand{\theenumi}{\textup{(\roman{enumi})}}%
\renewcommand{\labelenumi}{\theenumi}
\item For all $1\le q'<q$, the following two-weighted 
Poincar\'e type inequality  
\begin{equation}
\biggl( \int_{B\cap E} |u-u_{B,\mu}|^{q'} \,d\nu \biggr)^{1/q'}
   \le C_{q,\la} \Theta_{q,\la}(r) \frac{\nu(B\cap E)^{1/q'-1/q}}{q-q'}
             \biggl( \int_{2\la B} g^p \,d\mu \biggr)^{1/p}
\label{eq-two-weight-PI}
\end{equation}
holds for all balls $B=B(x,r)$ with $x\in E$ and $10\la r\le r_0$.
In particular, $u\in L\loc^{q'}(E,\nu)$.
\item \label{it-trunc-prop}
If the pair $(u,g)$  also satisfies the truncation property,
i.e.\ for all $l<k$, the inequality~\eqref{eq-def-abstr-PI} 
holds with $u$ and $g$ replaced by
\begin{equation}  \label{eq-def-trunc}
u_{l,k}:= \max\{l,\min \{u,k\}\} \quad \text{and} \quad
g_{l,k}:= g\chi_{\{l<u<k\}},
\end{equation}
respectively, then \eqref{eq-two-weight-PI} holds also for $q'=q$.
\end{enumerate}
\end{thm}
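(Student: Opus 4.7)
The plan is to establish the two-weighted Poincar\'e inequality via four steps: (1)~a telescoping/chaining argument using the Lebesgue-point assumption~\eqref{eq-def-Leb-pt} and the \p-Poincar\'e inequality~\eqref{eq-def-abstr-PI} to produce a pointwise estimate for $|u(x)-u_{B,\mu}|$ as a sum of local averages of $g$ across dyadic scales; (2)~a weak-type $(p,q)$ estimate that converts this pointwise sum into a level-set estimate with respect to $\nu$, using the definition of $\Theta_{q,\la}$; (3)~a Marcinkiewicz-type argument on each ball $B$ to pass from the weak-type to the strong-type $L^{q'}$ bound for $q'<q$, which will naturally produce the factor $1/(q-q')$; and (4)~Maz\cprime ya's truncation technique to handle part~\ref{it-trunc-prop} and reach the endpoint $q'=q$ without this loss.

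For Step~1, fix $B=B(x_0,r)$ with $x_0\in E$ and $10\la r\le r_0$. For each $x\in B\cap E$ which is a $\mu$-Lebesgue point of $u$ (valid $\nu$-a.e.\ by~\eqref{eq-def-Leb-pt}), set $B_j(x):=B(x,2^{-j+1}r)$ so that $u_{B_j,\mu}\to u(x)$. Telescoping and applying the doubling of $\mu$ together with~\eqref{eq-def-abstr-PI} to each consecutive pair yields
\[
|u(x)-u_{B,\mu}| \le C\sum_{j=0}^\infty \rho_j^\alpha \biggl(\vint_{\la B_j(x)} g^p\,d\mu\biggr)^{1/p}, \quad \rho_j=2^{-j+1}r.
\]
For Step~2, I derive the weak-type bound $\nu(\{x\in B\cap E:|u(x)-u_{B,\mu}|>t\})\le C(\Theta_{q,\la}(r)/t)^q\|g\|_{L^p(2\la B,\mu)}^q$. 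The key mechanism is that the series in Step~1 has $\sum_j\rho_j^\alpha\simle r^\alpha$, so a pigeonhole/Hedberg-type argument extracts for each $x$ in the level set a single scale $\rho(x)\in(0,r]$ at which the fractional average $\rho(x)^\alpha(\vint_{B(x,\la\rho(x))}g^p\,d\mu)^{1/p}$ is suitably large relative to $t$. Rearranging and invoking the definition of $\Theta_{q,\la}(r)$ makes the $\rho(x)^\alpha$ cancel cleanly, giving $\nu(B(x,\rho(x)))^{1/q}\le (C\Theta_{q,\la}(r)/t)(\int_{B(x,\la\rho(x))}g^p\,d\mu)^{1/p}$. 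A $5B$-covering then produces a disjoint subfamily $\{B(x_i,\rho_i)\}$ whose $5$-dilates cover the level set; the doubling of $\nu$ handles the dilation factor, while the bounded overlap of $\{\la B(x_i,\rho_i)\}$ (coming from disjointness of the cores and doubling of $\mu$, in the spirit of Lemma~\ref{lem-Hausdorff-max-princ}) together with the elementary inequality $\sum x_i^{q/p}\le(\sum x_i)^{q/p}$ (valid since $q\ge p$) collapses the sum to $\|g\|_{L^p(2\la B,\mu)}^q$.

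For Step~3, I combine the weak-type bound with the layer-cake formula and split the $t$-integral at the threshold $t_0:=\Theta_{q,\la}(r)\nu(B\cap E)^{-1/q}\|g\|_{L^p(2\la B,\mu)}$. The low-$t$ piece is controlled by the trivial bound $\nu(\{\cdot>t\})\le\nu(B\cap E)$ and contributes $\nu(B\cap E)\,t_0^{q'}$, whereas the high-$t$ piece integrates to $C(\Theta\|g\|_{L^p})^q t_0^{q'-q}/(q-q')$; with the chosen $t_0$ both are comparable and match the right-hand side of~\eqref{eq-two-weight-PI}, with the $1/(q-q')$ blow-up as $q'\uparrow q$. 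For Step~4 (part~\ref{it-trunc-prop}), write $u=u^+-u^-$ and dyadically decompose $u^+$ into truncations $u_{2^k,2^{k+1}}$ for $k\in\Z$. The truncation property lets us apply Step~2 to each pair $(u_{2^k,2^{k+1}},g_{2^k,2^{k+1}})$, and the key observation is that the sets $\{2^k<u<2^{k+1}\}$ are pairwise disjoint in $k$, so $\sum_k\|g_{2^k,2^{k+1}}\|_{L^p}^p=\|g\|_{L^p}^p$. Summing the weak-type bounds dyadically on the corresponding level sets of $u-u_{B,\mu}$, as in Maz\cprime ya's classical argument~\cite{Maz}, reconstructs $\|u-u_{B,\mu}\|_{L^q(B\cap E,\nu)}$ without the $1/(q-q')$ loss.

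The main obstacle is Step~2: the chaining produces a sum over all dyadic scales, and squeezing it into a single-scale estimate compatible with the supremum defining $\Theta_{q,\la}(r)$ demands the right pigeonhole so that the $\rho^\alpha$ factor on the $\mu$ and $\nu$ sides cancels cleanly and yields a clean $t^{-q}$ weak-type bound. A secondary technical issue is the bounded overlap of the dilated balls $\la B(x_i,\rho_i)$ arising from the $5B$-covering when the radii $\rho_i$ vary widely; this is handled via disjointness of the cores and doubling of $\mu$, via a Besicovitch-type counting argument.
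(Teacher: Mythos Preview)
Your Steps~1, 3 and~4 are broadly correct and match the paper's argument. The real problem is Step~2.

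The single-scale extraction you describe does not work. From the chaining bound
$|u(x)-u_{B,\mu}|\le C\sum_{j\ge0}\rho_j^\al\bigl(\vint_{\la B_j}g^p\,d\mu\bigr)^{1/p}$,
the fact that $\sum_j\rho_j^\al\lesssim r^\al$ only lets you pigeonhole with \emph{weights} $w_j=\rho_j^\al/Cr^\al$: there exists $j$ with
$\rho_j^\al(\vint_{\la B_j}g^p)^{1/p}>w_j t$, i.e.\ $(\vint_{\la B_j}g^p)^{1/p}>t/(Cr^\al)$.
This gives no control on $\rho_j^\al$ itself, and when you push this through the definition of $\Theta_{q,\la}$ you end up with an uncontrolled factor $(r/\rho_j)^\al$ that blows up at small scales. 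The claimed clean cancellation of $\rho(x)^\al$ simply does not happen. (Concretely, the terms in the chain can be like $t/j^2$: each is tiny, the sum exceeds $t$, and no single term is $\gtrsim t$.)

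The paper's Step~2 is genuinely different and uses an ingredient you never invoke: the dimension condition~\eqref{eq-dim-est-sig-E} on $\nu$. First the chain is rewritten using $\Theta_{q,\la}$ as
\[
|u(x)-u_{B,\mu}|\le C\Theta_{q,\la}(r)\sum_{j\ge0}\nu(B_j)^{-1/q}\Bigl(\int_{\la B_j}g^p\,d\mu\Bigr)^{1/p},
\]
and then the sum is split at an optimal index $j_0=j_0(x)$. For $j<j_0$ the lower bound $\nu(B_j)\gtrsim 2^{(j_0-j)\de}\nu(B_{j_0})$ from~\eqref{eq-dim-est-sig-E} makes the partial sum geometric; for $j\ge j_0$ one is led to the generalized maximal function
$M(x)=\sup_{B'\ni x}\nu(B')^{-1}\int_{\la B'}g^p\,d\mu$ (note the $\nu$ in the denominator). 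Optimizing $j_0$ yields the pointwise bound
$|u(x)-u_{B,\mu}|\le C\Theta_{q,\la}(r)\bigl(\int_{2\la B}g^p\bigr)^{1/p-1/q}M(x)^{1/q}$,
and the weak-$(1,1)$ estimate for $M$ (standard Vitali $5B$-covering applied to the family $\{\la B_x\}$, so the selected $\la B_i$'s are \emph{disjoint}) gives the weak-$(p,q)$ bound you want. This is a Hedberg-type \emph{splitting}, not a pigeonhole to one scale.

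Two smaller points. Your bounded-overlap claim for $\{\la B(x_i,\rho_i)\}$ via Lemma~\ref{lem-Hausdorff-max-princ} is wrong as stated: that lemma treats balls of the \emph{same} radius, and dilates of disjoint balls with varying radii can have unbounded overlap. The fix is to run the $5B$-covering on the dilated family, as above. In Step~4, you cannot truncate at all dyadic levels $k\in\Z$: to use the weak-type bound for $u_{l,l+2^k}$ you must know that its $\mu$-mean on $B$ sits below the midlevel, which forces $k$ above a threshold $k_0$ determined by $\vint_B(u-u_{B,\mu})_+\,d\mu$; the low levels are then handled separately by the original Poincar\'e inequality, exactly as in the paper.
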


A straightforward application of the triangle inequality shows that 
the integral average
$u_{B,\mu}$ in the left-hand side of \eqref{eq-two-weight-PI} can always
be replaced by $u_{B\cap E,\nu}$ at the cost of an additional factor $2$
on the right-hand side.

The truncation property in 
Theorem~\ref{thm-two-weight-PI-eps}\,\ref{it-trunc-prop} 
is satisfied if $g=|\grad u|$
in $\R^n$ or if $g=g_u$ is the minimal \p-weak upper gradient of $u$,
but not for the nonlocal Haj\l asz gradients,
both in $\R^n$ and in metric spaces. 
Before proving Theorem~\ref{thm-two-weight-PI-eps}, 
we formulate some remarks about its assumptions.

\begin{remark}
[\emph{Choice of $\al$ and $q$}]
In \eqref{eq-def-abstr-PI} and \eqref{eq-def-sup-Theta} 
we allow also  $\al>1$.
In the classical \p-Poincar\'e inequality in Euclidean
spaces, such a choice would force $u$ to be constant,
by Heikkinen--Koskela--Tuominen~\cite[Corollary~1.2]{HeiKosTuo}.
On the other hand, for 
functions defined on fractal sets, 
even $\al>1$ can give nontrivial results,
cf.\  Remark~\ref{rem-walking-dim}.

The proof of Theorem~\ref{thm-two-weight-PI-eps} requires $q>p$, 
but once \eqref{eq-two-weight-PI} has been
proved for some $q>p$, a similar inequality holds also for smaller exponents,
by H\"older's inequality. 
However, even in that case, \eqref{eq-def-sup-Theta} 
has to be assumed with some initial $q>p$ for the proof to apply. 
It would be interesting to see which Poincar\'e inequalities can be 
obtained from \eqref{eq-def-abstr-PI} when $\Theta_{q,\la}(r_0)<\infty$ 
only for some $q\le p$.
\end{remark}

\begin{remark}[\emph{Role of $\de$ and uniform perfectness}] 
\label{rem-doubl-imp-dim}
The size of $\de$ in~\eqref{eq-dim-est-sig-E} of Assumptions~\ref{ass-doubl}   
is unimportant and only has effect 
on the constant $C_{q,\la}$ in the Poincar\'e inequality~\eqref{eq-two-weight-PI}.
It can be proved as in \cite[Corollary~3.8]{BBbook}
that \eqref{eq-dim-est-sig-E}
follows for some $\de>0$ 
from the local doubling condition~\eqref{eq-def-doubl-on-E} 
if $E$ is connected or, more generally, locally uniformly perfect, 
i.e.\ there is $0<a<1$ such that the sets
$(E\cap B(x,r))\setm B(x,ar)$ are nonempty whenever 
$x\in E$ and $0<r\le r_0$.
Note that many fractal sets, 
which are natural candidates for $E$,
are uniformly perfect but not connected.
If $\nu$ is supported on $E$ and
satisfies \eqref{eq-def-doubl-on-E}, 
then \eqref{eq-dim-est-sig-E} with some $\de >0$ is equivalent
to the local uniform perfectness of~$E$, 
see Mart\'\i n--Ortiz~\cite[Lemma~7]{MarOrt}.
\end{remark}

\begin{remark}[\emph{$\mu$-Lebesgue points}]   
\label{rem-Leb-pts}
Assumption \eqref{eq-def-Leb-pt} in
Theorem~\ref{thm-two-weight-PI-eps} 
is not too restrictive for our applications.
Since $u\in L^1\loc(X,\mu)$ and $\mu$
satisfies the local doubling condition~\eqref{eq-def-doubl-on-E} on $E$,
it can be shown as in
Heinonen~\cite[Theorems~1.6 and 1.8]{Heinonen}, using only balls centred in $E$,
that $\mu$-a.e.\ $x\in E$ satisfies the equality in~\eqref{eq-def-Leb-pt}.

Lemma~\ref{lem-est-Leb-pts} below then shows that 
an $L^1\loc(X,\mu)$-\emph{re\-pre\-sent\-ative} of $u$ in
Theorem~\ref{thm-two-weight-PI-eps}
always has $\mu$-Lebesgue points $\nu$-a.e.\ in $E$.
\end{remark}

\begin{lem}[Non-Lebesgue points]   \label{lem-est-Leb-pts}
Assume that the measures $\mu$ and $\nu$ satisfy
\eqref{eq-def-doubl-on-E} and \eqref{eq-dim-est-sig-E} in Assumptions~\ref{ass-doubl}.
Let $u\in L^1\loc(X,\mu)$ be such that 
the Poincar\'e type inequality~\eqref{eq-def-abstr-PI} 
with parameters $\al>0$ and $\la\ge1$ holds 
for some function $g\in L^p\loc(X,\mu)$
and all balls centred in $E$ and of radius at most $r_0>0$.
Assume that 
\begin{equation*}   
\sup_{0<r\le r_0} \frac{r^{\al p} \nu(B(x,r))^{1/q}}{\mu(B(x,\la r))^{1/p}} < \infty
\quad \text{for every $x\in E$,}
\end{equation*}
where $q>p$.  Then $\nu(E_0)=0$, where
\[
E_0:=\{x\in E: \text{the limit in \eqref{eq-def-Leb-pt} does not exist  
or is not finite}\}.
\]
In particular, a representative $\ub$ of $u$ in  $L^1\loc(X,\mu)$
has $\mu$-Lebesgue points $\nu$-a.e.\ in $E$ 
in the sense of~\eqref{eq-def-Leb-pt}.
\end{lem}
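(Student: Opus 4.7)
The plan is to combine a dyadic telescoping argument, based on the $p$-Poincar\'e inequality \eqref{eq-def-abstr-PI}, with a weak-type maximal function estimate that transfers $\mu$-a.e.\ information to $\nu$-a.e.\ information on $E$ via the pointwise hypothesis.

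The local doubling of $\mu$ on $E$ makes the averages $\phi_r(x):=\vint_{B(x,r)}u\,d\mu$ well-defined for $x\in E$ and $0<r\le r_0$. By decomposing $E=\bigcup_m E^{(m)}$, where the supremum in the hypothesis is bounded by $m$ on $E^{(m)}$, and covering each $E^{(m)}$ by countably many balls, it suffices to prove $\nu(E_0\cap B_0\cap E^{(m)})=0$ for fixed $m$ and a fixed ball $B_0$. Setting $r_j:=r_0/2^j$, the Poincar\'e inequality on $B(x,r_j)$ combined with the doubling of $\mu$ at $x\in E$ (to pass between averages over $B(x,r_{j+1})$ and $B(x,r_j)$) gives
\begin{equation*}
|\phi_{r_j}(x)-\phi_{r_{j+1}}(x)| \le C r_j^{\al} \biggl(\vint_{B(x,\la r_j)} g^p\,d\mu\biggr)^{1/p}.
\end{equation*}
Introducing $\widetilde Mg(x):=\sup_{j\ge 0}\bigl(\vint_{B(x,\la r_j)}g^p\,d\mu\bigr)^{1/p}$, whenever $\widetilde Mg(x)<\infty$ the telescoping series is dominated by $C\widetilde Mg(x)\sum_j r_j^{\al}$, which converges since $\al>0$; a sandwich step between consecutive dyadic scales then upgrades convergence of $(\phi_{r_j}(x))_j$ to that of $\phi_r(x)$ as $r\to 0$. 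Hence $E_0\cap B_0\cap E^{(m)}\subset\{\widetilde Mg=\infty\}\cap E\cap B_0\cap E^{(m)}$.

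The main step --- and the main obstacle --- is showing this latter set is $\nu$-null: the standard weak-type bound for $(\vint_{B(x,\la r_j)}g^p\,d\mu)^{1/p}$ holds only with respect to $\mu$, and transferring it to $\nu$ on $E$ requires the pointwise hypothesis. For each $t>0$ set $F_t:=\{\widetilde Mg>t\}\cap E\cap B_0\cap E^{(m)}$; by Vitali's covering lemma, available through Lemma~\ref{lem-Hausdorff-max-princ} via the doubling of $\mu$, one extracts from $\{B(x,\la r_{j(x)})\}_{x\in F_t}$ a pairwise disjoint subfamily $\{B(x_i,\la r_{j_i})\}$ whose fivefold dilations still cover $F_t$. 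On each such ball $\int g^p\,d\mu>t^p\mu(B(x_i,\la r_{j_i}))$, while the uniform form of the hypothesis on $E^{(m)}$ together with the doubling of $\nu$ bounds $\nu(B(x_i,\la r_{j_i}))$ by a power of $\mu(B(x_i,\la r_{j_i}))$; summing, using the pairwise $\mu$-disjointness and the $L^p(B_1,\mu)$-integrability of $g$ over a bounded enlargement $B_1$ of $B_0$, yields $\nu(F_t)\to 0$ as $t\to\infty$. The precise exponents in the hypothesis are arranged so that these estimates balance across all dyadic scales simultaneously; reconciling this balance is the technical heart of the argument. Finally, setting $\ub(x):=\lim_{r\to 0}\phi_r(x)$ where the limit exists (hence $\nu$-a.e.\ on $E$) and $\ub:=u$ elsewhere produces a representative in $L^1\loc(X,\mu)$, which agrees with $u$ $\mu$-a.e.\ by the classical Lebesgue differentiation theorem (Remark~\ref{rem-Leb-pts}) and satisfies \eqref{eq-def-Leb-pt} $\nu$-a.e.\ in~$E$.
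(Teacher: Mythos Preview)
Your telescoping reduction to a maximal-function level set is on the right track, but the maximal function you choose creates a genuine gap. With the unweighted
\[
\widetilde Mg(x)=\sup_{j\ge0}\Bigl(\vint_{B(x,\la r_j)}g^p\,d\mu\Bigr)^{1/p},
\]
the covering step yields, for each selected ball,
$\mu(B(x_i,\la r_{j_i}))<t^{-p}\int_{B(x_i,\la r_{j_i})}g^p\,d\mu$,
while the pointwise hypothesis on $E^{(m)}$ gives only
$\nu(B(x_i,r_{j_i}))\le C\,r_{j_i}^{-\al pq}\,\mu(B(x_i,\la r_{j_i}))^{q/p}$
(up to the precise exponent conventions). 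The resulting bound for $\nu(F_t)$ carries an uncontrolled factor $r_{j_i}^{-\al pq}$, and since the radii $r_{j_i}$ can be arbitrarily small, the sum does not go to zero. Your closing remark that ``the precise exponents \ldots\ are arranged so that these estimates balance'' is exactly where the argument fails: with an unweighted maximal function they do \emph{not} balance.

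The paper's proof repairs this by two linked choices. First, it replaces $\widetilde Mg$ by the \emph{fractional} maximal function
\[
x\longmapsto\sup_{0<r\le r_0/5\la} r^{\al'}\Bigl(\vint_{B(x,r)}g^p\,d\mu\Bigr)^{1/p}
\quad\text{with }0<\al'<\al,
\]
so that finiteness of this quantity still forces convergence of the telescoping series (since one only needs some positive power of $r$ in front). Second, and crucially, it invokes the \emph{dimension} condition \eqref{eq-dim-est-sig-E} for $\nu$ --- not merely doubling --- to bound $\nu(B(x,r))^{1-p/q}\le C(r/r_0)^{\de(1-p/q)}$. Choosing $\al'=\al-\de(1/p-1/q)$ makes the negative power of $r$ coming from the hypothesis cancel exactly against this positive power, after which the Vitali argument goes through and $\nu(E_t)\le Ct^{-1}\int g^p\,d\mu\to0$. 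Your sketch never uses \eqref{eq-dim-est-sig-E}, and without it (or an equivalent lower $\nu$-dimension bound) the transfer from $\mu$ to $\nu$ cannot be made scale-uniform.
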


\begin{proof}
By splitting $E$ into subsets and using the countable subadditivity of
$\nu$, we can assume that  
$E$ is contained in a ball $B_0$ of radius $r_0$ and that for some $M$,
\begin{equation}   \label{eq-Theta-with-M}
\sup_{0<r\le r_0} \frac{r^{\al p} \nu(B(x,r))^{1/q}}{\mu(B(x,\la r))^{1/p}} \le M
\quad \text{for all $x\in E$}.
\end{equation}
The Poincar\'e type inequality~\eqref{eq-def-abstr-PI} 
and the local doubling property~\eqref{eq-def-doubl-on-E} of $\mu$
imply as in \cite[(5.1) and the proof of Theorem~5.1]{BBbook} that 
\begin{equation}    \label{eq-ub(x)-ex}
\lim_{r\to0} \vint_{B(x,r)}u\,d\mu \quad \text{exists 
   and is finite,}
\end{equation}
whenever $x\in E$ is such that the fractional maximal function
\[
\sup_{0<r\le r_0/5\la} r^{\al'} \biggl( \vint_{B(x,r)} g^p\,d\mu \biggr)^{1/p} 
\]
is finite for some $0<\al'<\al$.
It therefore suffices to estimate $\nu(E_t)$ for $t>0$, where
\[
E_t = \biggl\{x\in E: \sup_{0<r\le r_0/5\la} r^{\al' p} 
    \vint_{B(x,r)} g^p\,d\mu > t\biggr\}.
\]
Let $\al'=\al-\de(1/p-1/q)$, where $\de$ is as in \eqref{eq-dim-est-sig-E}.
For each  $x\in E_t$, find $0<r_x\le r_0/5\la$ such that 
\[
\mu(B(x,r_x)) < \frac{r_x^{\al' p}}{t} \int_{B(x,r_x)} g^p\,d\mu.
\]
Together with the local doubling property~\eqref{eq-def-doubl-on-E} of $\mu$  
and \eqref{eq-Theta-with-M}, this shows that
\[
\nu(B(x,r_x))^{p/q} 
  \le \frac{M\mu(B(x,\la r_x))}{r_x^{\al p}} 
  \le \frac{C M r_x^{(\al'-\al)p}}{t} \int_{B(x,r_x)} g^p\,d\mu.
\]
Moreover, \eqref{eq-dim-est-sig-E} yields
\[
\nu(B(x,r_x))^{1-p/q} \le C \nu(B(x,r_0))^{1-p/q}  \Bigl( \frac{r_x}{r_0} \Bigr)^{\de(1-p/q)}.
\]
Combining the last two estimates with the local doubling
property~\eqref{eq-def-doubl-on-E} for $\nu$ and the fact that
$(\al-\al')p = \de(1-p/q)$, we get
\[
\nu(B(x,5r_x)) \le   \frac{C}{t} \int_{B(x,r_x)} g^p\,d\mu,
\]
where $C$ depends on $M$, $r_0$ and $B_0$ but not on $x$ and $r_x$.
The balls $B(x,r_x)$, with $x\in E_t$, cover $E_t$ and the 
Vitali type 5-covering lemma \cite[Lemma~5.5]{HeKo98}
implies that there are at most countably many pairwise disjoint balls 
$B_j=B(x_j,r_{x_j})$ with $x_j\in E_t$ such that $E_t\subset \bigcup_j 5B_j$.
We thus get that
\[
\nu(E_t) \le \sum_j \nu(5B_j) \le \frac{C}{t}  
         \sum_j \int_{B_j} g^p\,d\mu
\le \frac{C}{t}  \int_{\bigcup_j B_j} g^p\,d\mu.
\]
Since $g\in L^p\loc(X,\mu)$ and $E_0\subset E_t$ for all $t>0$,
letting $t\to\infty$ shows that $\nu(E_0)=0$.
Redefining $u$ on $E\setm E_0$ by \eqref{eq-ub(x)-ex} then provides the desired
$L^1\loc(X,\mu)$-representative, see Remark~\ref{rem-Leb-pts}.
\end{proof}

\subsection{Proof of Theorem~\ref{thm-two-weight-PI-eps}}

\begin{proof} 
Let $x_0\in E$ and $B=B(x_0,r)$ with $10\la r\le r_0$ be fixed. 
Let $u$ and $g$ be as in the statement of the theorem.
Assume that $x\in B\cap E$ is a Lebesgue point of $u$ with respect to $\mu$
and set
\begin{equation*} 
B_0=2B \quad \text{and} \quad B_j=B(x,2^{1-j} r),
\quad j=1,2,\ldots.
\end{equation*}
The Poincar\'e inequality~\eqref{eq-def-abstr-PI}, a telescoping argument
and \eqref{eq-def-doubl-on-E} for $\mu$ imply that
\begin{align*}
|u(x)-u_{B_0,\mu}| &= \lim_{j\to\infty} |u_{B_j,\mu} - u_{B_0,\mu}|
  \le \sum_{j=0}^{\infty} \vint_{B_{j+1}} |u-u_{B_j,\mu}|\,d\mu \nonumber\\
   &\le C \sum_{j=0}^{\infty} \vint_{B_{j}} |u-u_{B_j,\mu}|\,d\mu
  \le C \sum_{j=0}^{\infty} (2^{-j} r)^\al \biggl( \vint_{\la B_j} g^p \,\dmu
                        \biggr)^{1/p}
\end{align*}
and
\[
|u_{B,\mu}-u_{B_0,\mu}|
        \le C r^\al \biggl( \vint_{\la B_0} g^p \,\dmu \biggr)^{1/p}.
\]
Applying~\eqref{eq-def-sup-Theta} to the balls $B_j$
and $B_0$ yields
\begin{equation}   \label{eq-Sig'+Sig''}
|u(x)-u_{B,\mu}| \le C \Theta_{q,\la}(r) 
      \sum_{j=0}^{\infty} \frac{1}{\nu(B_j)^{1/q}}
       \biggl( \int_{\la B_j} g^p \,\dmu \biggr)^{1/p}.
\end{equation}
Write the above sum as $\Sigma' + \Sigma''$, where the summation
in $\Sigma'$ and  $\Sigma''$ is over $j<j_0$ and $j\ge j_0$, respectively,
and $j_0$ will be chosen later.
By the local dimension condition~\eqref{eq-dim-est-sig-E} 
for $\nu$ we have
\begin{align}
\nu(B_j) &\ge C 2^{(j_0-j)\de} \nu(B_{j_0}) \quad \text{for }j<j_0,
\nonumber \\
\nu(B_j) &\le C 2^{(j_0-j)\de} \nu(B_{j_0}) \quad \text{for }j\ge j_0,
\label{eq-est-nu-Bj0}
\end{align}
and hence,
\begin{align*}
\Sigma' &= \sum_{j=0}^{j_0-1} \frac{1}{\nu(B_j)^{1/q}}
                        \biggl( \int_{\la B_j} g^p \,\dmu \biggr)^{1/p} \\
& \le C \sum_{j=0}^{j_0-1} \frac{2^{(j-j_0){\de}/q}}{\nu(B_{j_0})^{1/q}}
                        \biggl( \int_{\la B_j} g^p \,\dmu \biggr)^{1/p} 
              \le  \frac{C}{\nu(B_{ j_0})^{1/q}}
                        \biggl( \int_{\la B_0} g^p \,\dmu \biggr)^{1/p}.
\end{align*}
At the same time, because of~\eqref{eq-est-nu-Bj0} 
and since $1/p-1/q>0$, we have
\[
 \Sigma'' = \sum_{j=j_0}^{\infty} \nu(B_j)^{1/p-1/q}
  \biggl( \frac{1}{\nu(B_j)} \int_{\la B_j} g^p \,\dmu \biggr)^{1/p}
                \le C \nu(B_{j_0})^{1/p-1/q} M(x)^{1/p},
\]
where
\begin{equation}   \label{eq-def-M(x)}
        M(x) := \sup_{B'} \frac{1}{\nu(B')} \int_{\la B'} g^p \,\dmu
\end{equation}
is a generalized maximal function,
with the supremum taken over all balls $B'$,
containing $x$ and of radius $r'\le2r$.
Inserting this into~\eqref{eq-Sig'+Sig''} yields
\begin{align}   
|u(x)-u_{B,\mu}| &\le C \Theta_{q,\la}(r) (\Sigma' + \Sigma'')
\label{eq-Sigma'+Sigma''} \\
&\le C \frac{\Theta(r)}{\nu(B_{j_0})^{1/q}}
     \biggl( \biggl(\int_{\la B_0} g^p \,\dmu \biggr)^{1/p}
      + \bigl( \nu(B_{j_0})  M(x) \bigr)^{1/p} \biggr). \nonumber
\end{align}
To minimize the right-hand side of~\eqref{eq-Sigma'+Sigma''}, 
we will choose $j_0$ so that 
$\Sigma^{'}$ and $\Sigma^{''}$ are comparable.
We can assume that $M(x)>0$, as otherwise \eqref{eq-def-M(x)},
\eqref{eq-def-abstr-PI} and \eqref{eq-def-Leb-pt}
imply that $g\equiv0$ $\mu$-a.e.\ in $\la B$ and 
$u\equiv u_{B,\mu}$ both $\mu$-a.e.\ and $\nu$-a.e.\ in $B$, 
so that the statement of the theorem holds trivially. 

A standard argument using the Vitali type 5-covering lemma \cite[Lemma~5.5]{HeKo98},
together with the doubling property \eqref{eq-def-doubl-on-E} of
$\nu$ on $E$ and the fact that
$10\la r\le r_0$, shows 
for all $\tau>0$ the weak type inequality
\begin{equation}   \label{eq-Vitali-M(x)}
\nu( \{x\in B\cap E: M(x)\ge \tau \}) 
\le \frac{C}{\tau} \int_{2\la B} g^p \,\dmu,
\end{equation}
cf.\ Chapter~1 in Stein~\cite{Stein}.
Hence $M(x)<\infty$ for $\nu$-a.e.\ $x\in B\cap E$.
We consider only such $x$ in the rest of the proof.
Since $B'=B_0$ is allowed in~\eqref{eq-def-M(x)} and
$\nu(B_j)\to0$, as $j\to\infty$, 
we can use the doubling property of $\nu$ to
find $j_0=j_0(x,r)\ge 0$ such that
\[
\frac{1}{\nu(B_{j_0})} \int_{\la B_0} g^p \,\dmu \le
M(x) \le \frac{1}{\nu(B_{j_0+1})} \int_{\la B_0} g^p \,\dmu
\le \frac{C}{\nu(B_{j_0})} \int_{\la B_0} g^p \,\dmu.
\]
Then $\nu(B_{j_0})$ is comparable to 
$M(x)^{-1} \int_{\la B_0} g^p \,\dmu$ and 
inserting this into \eqref{eq-Sigma'+Sigma''} yields
\begin{equation*}   
|u(x)-u_{B,\mu}| \le C \Theta_{q,\la}(r)
         \biggl( \int_{2\la B} g^p \,\dmu \biggr)^{1/p-1/q} M(x)^{1/q},
\end{equation*}
since $\la B_0=2\la B$.
Using~\eqref{eq-Vitali-M(x)} we therefore conclude that
\begin{equation}   \label{eq-est-nu-level-g-u-mu}
t^q \nu(\{ x\in B\cap E: |u(x)-u_{B,\mu}| \ge t \})
         \le C [\Theta_{q,\la}(r)]^q
                        \biggl( \int_{2\la B} g^p \,\dmu \biggr)^{q/p}.
\end{equation}
Now, for $1\le q'<q$, we have by the Cavalieri principle that
\[
\int_{B\cap E} |u-u_{B,\mu}|^{q'}\,d\nu 
= q' \int_0^\infty t^{q'-1} \nu(\{ x\in B\cap E: |u(x)-u_{B,\mu}| \ge t\}) \,dt.
\]
Splitting the integral as $\int_0^\infty = \int_0^{t_0} + \int_{t_0}^\infty$
and estimating the integrands by $\nu(B\cap E)$ and
\eqref{eq-est-nu-level-g-u-mu}, respectively, yields
\[
\int_{B\cap E} |u-u_{B,\mu}|^{q'}\,d\nu 
\le t_0^{q'} \nu(B\cap E) 
    + \frac{Cq' t_0^{q'-q} [\Theta_{q,\la}(r)]^q}{q-q'} \biggl( \int_{2\la B} g^p \,\dmu \biggr)^{q/p}.
\]
Choosing
\[
t_0 = \frac{\Theta_{q,\la}(r)}{\nu(B\cap E)^{1/q}} 
              \biggl( \int_{2\la B} g^p \,\dmu \biggr)^{1/p}
\]
then gives~\eqref{eq-two-weight-PI} for $q'<q$.

To obtain~\eqref{eq-two-weight-PI}
also for $q'=q$, we proceed by Maz\cprime ya's truncation method \cite{Maz}.
We shall estimate the integral
\[
\vint_{B\cap E} v^{q}\,d\nu, \quad \text{where } v:=\max\{u-u_{B,\mu},0\}.
\]
We can clearly assume that $v\not\equiv 0$.
Let $k_0$ be the largest integer such that
\begin{equation}
2^{k_0-1} \le \vint_{B} v\,d\mu,
               \label{choose-k0}
\end{equation}
and for $k> k_0$ let $u_k:= u_{l,l+2^k}$ be the truncations of $u$
at levels $l:=u_{B,\mu}+2^k$ and $l+2^k=u_{B,\mu}+2^{k+1}$,
defined as in \eqref{eq-def-trunc}.
Note that $u_k =(u-u_{B,\mu})_{2^k,2^{k+1}} + u_{B,\mu}\le v+l$.
Since \eqref{choose-k0} fails for $k>k_0$, we have for such $k$,
\[
(u_k)_{B,\mu} \le l + \vint_{B} v \,d\mu < l + 2^{k-1}.
\]
Moreover, $v(x)\ge 2^{k+1}$ if and only if $u_k(x)\ge l+2^k$.
Hence the
estimate~\eqref{eq-est-nu-level-g-u-mu}, applied with $t=2^{k-1}$ to the
truncations $u_k$, yields that for $k>k_0$,
\begin{align*}
&2^{(k-1)q} \nu(\{ x\in B\cap E: v(x)\ge 2^{k+1} \}) \\
 & \quad \quad \quad \quad
\le 2^{(k-1)q} \nu(\{ x\in B\cap E: |u_k(x)-(u_k)_{{B,\mu}}| \ge 2^{k-1} \}) \\
 & \quad \quad \quad \quad
\le C [\Theta_{q,\la}(r)]^q \biggl( \int_{2\la B} g_{k}^p \,\dmu \biggr)^{q/p},
\end{align*}
where $g_k:= g\chi_{\{2^k<v<2^{k+1}\}}$.
Summing over $k> k_0$ we get that
\begin{align}
\int_{\{x\in B\cap E: v(x)>2^{k_0+2}\}} v^{q} \,d\nu
      &\le \sum_{k=k_0+1}^{\infty} 2^{(k+2)q}
            \nu(\{ x\in B\cap E: 2^{k+2} \ge v(x)\ge 2^{k+1} \}) \nonumber \\
      &\le C [\Theta_{q,\la}(r)]^q \sum_{k=k_0+1}^{\infty}
             \biggl( \int_{2\la B} g_{k}^p \,\dmu \biggr)^{q/p} \nonumber\\
      &\le C [\Theta_{q,\la}(r)]^q \biggl( \int_{2\la B} g^p \,\dmu \biggr)^{q/p},
\label{est-int-A-large-k}
\end{align}
where in the last step we used the elementary inequality
\(
\sum_{j=1}^\infty a_j^{q/p} 
\le  \Bigl(\sum_{j=1}^\infty a_j\Bigr)^{q/p},
\)
valid since $q>p$.
At the same time, \eqref{choose-k0} together with the fact that
$0\le v \le |u-u_{B,\mu}|$ and the assumed Poincar\'e type 
inequality~\eqref{eq-def-abstr-PI} implies that
\begin{align*}
\int_{\{x\in B\cap E:v(x)\le2^{k_0+2}\}} v^{q} \,d\nu
      &\le 2^{(k_0+2)q} \nu(B\cap E)
   \le C \nu(B\cap E) \biggl( \vint_B v\,d\nu \biggr)^{q}  \nonumber   \\
&\le C\frac{r^{\al q} \nu(B\cap E)}{\mu(\la B)^{q/p}}
          \biggl( \int_{\la B} g^p \,\dmu \biggr)^{q/p}.
\end{align*}
Adding this to \eqref{est-int-A-large-k}
we conclude, using the definition of $\Theta_{q,\la}(r)$, that
\begin{equation*}
\int_{B\cap E} \max\{u-u_{B,\mu},0\}^{q}\,d\nu
\le C[\Theta_{q,\la}(r)]^q \biggl( \int_{2\la B} g^p \,\dmu \biggr)^{q/p}.
\end{equation*}
The integral $\int_{B\cap E} \max\{u_{B,\mu}-u,0\}^{q}\,d\nu$ 
can be estimated similarly,    
since \eqref{eq-def-abstr-PI} and the truncation property remain invariant  
when replacing the pair $(u,g)$ by $(-u,g)$. 
Hence, we can conclude that \eqref{eq-two-weight-PI} holds
and $u\in L^q(B\cap E,\nu)$.
\end{proof}

\begin{proof}[Proof of Theorem~\ref{thm-intro-PI}]
Since $E$ is uniformly perfect, Remark~\ref{rem-doubl-imp-dim}
implies that the dimension condition~\eqref{eq-dim-est-sig-E} holds for $\nu$. 
For a fixed ball $B\subset X$ centred in $E$, the statement now follows by applying
Theorem~\ref{thm-two-weight-PI-eps} with $E$ replaced by $E\cap B$.
Note that the assumptions in Theorem~\ref{thm-two-weight-PI-eps} 
are satisfied for any $0<r_0<\infty$.
\end{proof}

\section{Embeddings with locally doubling measures}
\label{sect-loc-doubl-meas}

Theorems~\ref{thm-precompact} and~\ref{thm-two-weight-PI-eps}
imply the following practical conditions for compact and bounded embeddings. 
Recall from the introduction that an embedding into $L^q(E,\nu)$
means that $\ub\in L^q(E,\nu)$, where $\ub$ is given by~\eqref{eq-Leb-repr}.
Conditions~\eqref{eq-def-doubl-on-E} and
  \eqref{eq-dim-est-sig-E} in
Assumptions~\ref{ass-doubl} and $\Theta_{q,\la}(r_0)<\infty$ imply
that $\ub=u$ $\mu$-a.e.\ in $E$, see Remark~\ref{rem-Leb-pts}.
Recall Definition~\ref{deff-al-poinc-spa} of the $E$-restricted
Poincar\'e Sobolev space $P^{\al,p}_{\tau,E}(X,\mu)$.

\begin{thm}[Embeddings with locally doubling measures]   
\label{thm-comp-for-q'<q-gen}
Assume that $E$ is totally bounded and that the measures $\mu$ and $\nu$
satisfy the doubling conditions~\eqref{eq-def-doubl-on-E} and
\eqref{eq-dim-est-sig-E} 
in Assumptions~\ref{ass-doubl}
for all balls $B$ centred in $E$ and with radius at most~$r_0$.
Let $\Y\subset P^{\al,p}_{\la,E}(X,\mu)$, $\la\ge1$,
be equipped with one of the norms 
\begin{equation}   \label{eq-choose-norm-gen}
\|u\|_{L^1(E,\nu)} + \|G(u)\|_{L^p(X,\mu)} \quad \text{or} \quad
\|u\|_{L^1(X,\mu)} + \|G(u)\|_{L^p(X,\mu)},
\end{equation}
where $G(u)$ is such that~\eqref{def-PI-ineq-spc} in
Definition~\ref{deff-al-poinc-spa} holds with $g=G(u)$.

Then the following hold for $q>p$\/{\rm:}
\begin{enumerate}
\renewcommand{\theenumi}{\textup{(\roman{enumi})}}%
\renewcommand{\labelenumi}{\theenumi}
\item \label{it-Th<infty-comp}
If $\Theta_{q,\la}(r_0)<\infty$ then the embedding
$\Y \embed L^{q'}(E,\nu)$ is 
compact for all $q'<q$.
\item \label{it-Th-trunc}
If functions in $\Y$ satisfy the truncation property 
in Theorem~\ref{thm-two-weight-PI-eps}\,\ref{it-trunc-prop}, then the
embedding $\Y \embed L^{q}(E,\nu)$ 
is bounded when $\Theta_{q,\la}(r_0)<\infty$ and compact when $\Theta_{q,\la}(r)\to0$, as $r\to0$. 
\end{enumerate}
\end{thm}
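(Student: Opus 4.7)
The plan is to extract, for each $m$, a finite covering family of $E$ and apply Theorem~\ref{thm-two-weight-PI-eps} to obtain the local Poincar\'e inequalities~\eqref{eq-def-loc-PI} required by Assumptions~\ref{ass-comp}, and then invoke Theorem~\ref{thm-precompact}; the boundedness statement in~\ref{it-Th-trunc} is handled analogously via Theorem~\ref{thm-bounded}.

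First, I would choose $r_m\in(0,r_0/(10\la)]$ with $r_m\to0$. Total boundedness of $E$ together with the local doubling~\eqref{eq-def-doubl-on-E} of $\mu$ and Lemma~\ref{lem-Hausdorff-max-princ} yields a finite covering of $E$ by balls $B_{i,m}=B(x_{i,m},r_m)$, $i=1,\dots,K_m$, centred at points $x_{i,m}\in E$, such that the dilated balls $E'_i:=2\la B_{i,m}$ have overlap $N_m=N$ independent of $m$. Set $E_i:=B_{i,m}\cap E$ and $a_{E_i}(u):=u_{B_{i,m},\mu}$. For $u\in\Y\subset P^{\al,p}_{\la,E}(X,\mu)$ the Poincar\'e inequality~\eqref{eq-def-abstr-PI} holds with $g=G(u)$ on every such ball, and an $L^1\loc(X,\mu)$-representative of $u$ satisfies~\eqref{eq-def-Leb-pt} $\nu$-a.e.\ on $E$ by Remark~\ref{rem-Leb-pts} and Lemma~\ref{lem-est-Leb-pts}. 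Theorem~\ref{thm-two-weight-PI-eps} therefore provides~\eqref{eq-def-loc-PI} with
\[
  C_m = \frac{C_{q,\la}\,\Theta_{q,\la}(r_m)}{q-q'}\,\sup_{i}\nu(E_i)^{1/q'-1/q}
\]
in case~\ref{it-Th<infty-comp}, and with $C_m=C_{q,\la}\Theta_{q,\la}(r_m)$ in case~\ref{it-Th-trunc}.

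For~\ref{it-Th<infty-comp} it suffices to treat $p\le q'<q$; the remaining range $q'<p$ then follows by H\"older's inequality, since total boundedness of $E$ combined with~\eqref{eq-def-doubl-on-E} bounds $\nu(E)$. Since $\Theta_{q,\la}(r_m)\le\Theta_{q,\la}(r_0)<\infty$ and~\eqref{eq-dim-est-sig-E} together with local doubling forces $\sup_i\nu(E_i)\to0$ as $r_m\to0$, while $1/q'-1/q>0$, we obtain $C_m\to0$. As $\|G(u_n)\|_{L^p(X,\mu)}$ is uniformly bounded by the $\Y$-norm and $N_m=N$, condition~\eqref{eq-CMN-to-0} is satisfied. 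Under either norm in~\eqref{eq-choose-norm-gen} the sequence $\{(u_n)_{B_{i,m},\mu}\}_n$ is bounded for each fixed $m,i$: this is immediate from the $L^1(X,\mu)$-norm, while from the $L^1(E,\nu)$-norm it follows after bounding $|u_{B_{i,m},\mu}-u_{B_{i,m}\cap E,\nu}|$ via the local Poincar\'e inequality itself. Theorem~\ref{thm-precompact}\,\ref{it-p-le-q} then produces a subsequence converging in $L^{q'}(E,\nu)$.

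For~\ref{it-Th-trunc}, the truncation property permits $q'=q$ in Theorem~\ref{thm-two-weight-PI-eps}. When $\Theta_{q,\la}(r_0)<\infty$, a single covering family at a fixed scale verifies Assumptions~\ref{ass-bdd} with finite~$\II$, and Theorem~\ref{thm-bounded} yields the bounded embedding $\Y\embed L^q(E,\nu)$. If moreover $\Theta_{q,\la}(r)\to0$ as $r\to0$, then $C_m=C_{q,\la}\Theta_{q,\la}(r_m)\to0$ and the argument from~\ref{it-Th<infty-comp}, applied with $q'=q$, upgrades boundedness to compactness via Theorem~\ref{thm-precompact}\,\ref{it-p-le-q}. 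The main obstacle I anticipate is verifying the decay $\sup_i\nu(E_i)\to0$ uniformly in~$i$, which requires first producing a uniform upper bound for $\nu(B(x,r_0))$ over $x\in E$, obtained by covering $E$ by finitely many balls of radius $r_0/2$ centred in $E$ and iterating the local doubling constant, before applying~\eqref{eq-dim-est-sig-E}.
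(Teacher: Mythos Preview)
Your proposal is correct and follows essentially the same route as the paper: construct finite coverings of $E$ by balls of radius $r_m\to0$ with uniformly bounded overlap via Lemma~\ref{lem-Hausdorff-max-princ}, apply Theorem~\ref{thm-two-weight-PI-eps} to obtain the local Poincar\'e inequalities with the displayed $C_m$, and then invoke Theorems~\ref{thm-precompact} and~\ref{thm-bounded}. The paper handles the decay $\sup_i\nu(E_i)\to0$ exactly as you anticipate (see~\eqref{eq-B-r/r0}), and it treats the two norms in~\eqref{eq-choose-norm-gen} by allowing both choices $a_{E_i}(u)=u_{B_i,\mu}$ and $a_{E_i}(u)=u_{E_i,\nu}$ simultaneously (cf.\ the remark after Theorem~\ref{thm-two-weight-PI-eps}), which is a slightly cleaner alternative to your difference estimate but amounts to the same thing.
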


Since $E$ is totally bounded, we have $\nu(E)<\infty$, and so embeddings into
$L^q$ for $q\le p$ follow from those with $q>p$. 
Propositions~\ref{prop-comp-imp-M-to0} and~\ref{prop-Th<infty-necess} below show that 
statement~\ref{it-Th-trunc} admits a converse, at least for some choices of $\Y$.
See also Example~\ref{ex-optimal-emb} for a situation where 
\ref{it-Th<infty-comp} is not strong 
enough to include the optimal exponent $q=2$ for a compact embedding.
Total boundedness of $E$ is natural since e.g.\ the Sobolev space $W^{1,p}(\R^n)$ does not embed
compactly into any $L^q(\R^n)$.
By Proposition~\ref{prop-tot-bdd}, it is also necessary 
at least for some $\Y$ when $q=p$, $E=X$ and $\nu=\mu$ is doubling.

To estimate $\Theta_{q,\la}(r)$, we can use the dimension
conditions~\eqref{eq-dim-mu-only-s} and~\eqref{eq-dim-nu-only-sig}
from Assumptions~\ref{ass-doubl}.
Note that \eqref{eq-dim-nu-only-sig} with $\sig=\de$ follows from
\eqref{eq-dim-est-sig-E} 
if $E$ can be covered by finitely many balls $\{B^0_j\}_j$ 
of radius $r_0$ and $\nu(2B^0_j)<\infty$.
Indeed, for each $x\in E$ there is $B^0_j$ such that $x\in B^0_j$ and
hence for $r\le r_0$,
\begin{equation}  \label{eq-B-r/r0}
\nu(B(x,r)) \le C \nu(B(x,r_0)) \Bigl( \frac{r}{r_0} \Bigr)^{\de}
\le \frac{C \max_j\nu(2B^0_j)}{r_0^{\de}} r^\de.
\end{equation}
Similarly, an iteration of \eqref{eq-def-doubl-on-E} 
implies \eqref{eq-dim-mu-only-s} with some $s>0$, see~\cite[Lemma~3.3]{BBbook}.

\begin{proof}[Proof of Theorem~\ref{thm-comp-for-q'<q-gen}]
Choose a sequence $r_m\to0$ so that $0<r_m\le r_0/10\la$. 
As in Remark~\ref{rem-Hausdorff-max-princ}, and using the total
boundedness of $E$,
the set $E$ can be for each fixed $m=1,2,\ldots$, covered by 
finitely many balls $B_i$ with centres in $E$ and radius $r_m$ 
so that the balls $\tfrac12 B_i$ are pairwise disjoint.
This gives us a covering family with $E_i=B_i\cap E$ and  $E'_i=2\la B_i$.
Lemma~\ref{lem-Hausdorff-max-princ} with $X$ replaced by $E$,
together with the local doubling property on $E$, shows that the 
dilated balls $2\la B_i$ have a bounded overlap~$N$ independent of~$m$.
Note that $\nu(E_i)\le Cr_m^\de$ with $C$ independent of $m$ and $i$,
by~\eqref{eq-B-r/r0}.
By Remark~\ref{rem-Leb-pts}, 
functions in $\Y$ (modified by~\eqref{eq-Leb-repr}) satisfy~\eqref{eq-def-Leb-pt}.

To prove \ref{it-Th<infty-comp},
consider a sequence in $\Y$, which is bounded with respect to
one of the norms in \eqref{eq-choose-norm-gen}.
Theorem~\ref{thm-two-weight-PI-eps} implies that 
\eqref{eq-two-weight-PI}  holds with $G_m(u)=G(u)$ for all $1\le q'<q$. 
Thus, Assumptions~\ref{ass-comp}  are satisfied for all such $q'$ with
\begin{equation}   \label{eq-pick-C_m}
C_m:= C\Theta_{q,\la}(r_m) \max_i \nu(E_i)^{1/q'-1/q}
  \le C\Theta_{q,\la}(r_0) r_m^{(1/q'-1/q)\de}
\end{equation}
and with both integral
averages $a_{E_i}(u)=u_{B_i,\mu}$ and $a_{E_i}(u)=u_{E_i,\nu}$.
These averages are clearly bounded for each fixed $E_i$ and each of the
norms in~\eqref{eq-choose-norm-gen}. 
Since $\Theta_{q,\la}(r_0)<\infty$,  
we have $C_m\to0$ and an application of Theorem~\ref{thm-precompact}
proves~\ref{it-Th<infty-comp}.

If  the truncation property holds in~\ref{it-Th-trunc}, then \eqref{eq-two-weight-PI}
and \eqref{eq-pick-C_m} hold also for $q'=q$ with $C_m:= C\Theta_{q,\la}(r_m)$.
Boundedness then follows from Theorem~\ref{thm-bounded} since 
$\Theta_{q,\la}(r_0)<\infty$ implies that Assumptions~\ref{ass-bdd} 
are satisfied with the above integral averages $a_{E_i}(u)$.
Similarly, $\Theta_{q,\la}(r_m)\to0$ in~\ref{it-Th-trunc}
implies that Assumptions~\ref{ass-comp}  
hold with $C_m\to0$, which 
gives a compact embedding also into $L^{q}(E,\nu)$.
\end{proof}

\begin{cor}[Embeddings with dimension conditions]   
\label{cor-comp-dim}
Assume that $E$ is totally bounded and that $\mu$ and $\nu$
satisfy the doubling and dimension conditions~\eqref{eq-def-doubl-on-E}--\eqref{eq-dim-nu-only-sig} 
in Assumptions~\ref{ass-doubl} with exponents $s$ and $\sig>s-\al p$,
for all balls $B$ centred in $E$ and with radius at most~$r_0$.
Let $\Y\subset P^{\al,p}_{\la,E}(X,\mu)$, $\la\ge1$, 
be equipped with one of the norms~\eqref{eq-choose-norm-gen}.

Then the following hold for $q>p$\/{\rm:}
\begin{enumerate}
\renewcommand{\theenumi}{\textup{(\roman{enumi})}}%
\renewcommand{\labelenumi}{\theenumi}
\item \label{it-dim-comp}
The embedding 
$\Y \embed L^{q}(E,\nu)$ is compact whenever $q(s-\al p)<\sig p$.
When $\nu=\mu$, condition~\eqref{eq-dim-nu-only-sig} is not needed and
the embedding is  compact whenever $q(s-\al p)< sp$.
\item \label{it-dim-trunc}
If functions in $\Y$ satisfy the truncation property 
in Theorem~\ref{thm-two-weight-PI-eps}\,\ref{it-trunc-prop}, then the
embedding $\Y \embed L^{q}(E,\nu)$ is bounded whenever $q(s-\al p)\le\sig p$.
When $\nu=\mu$, condition~\eqref{eq-dim-nu-only-sig} is not needed and
the embedding is  bounded whenever $q(s-\al p)\le sp$.
\end{enumerate}
\end{cor}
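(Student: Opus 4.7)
The plan is to deduce the corollary directly from Theorem~\ref{thm-comp-for-q'<q-gen} by using the dimension conditions~\eqref{eq-dim-mu-only-s} and~\eqref{eq-dim-nu-only-sig} to control the local Poincar\'e constant $\Theta_{q,\la}(r)$ from \eqref{eq-def-sup-Theta}. Inserting these bounds into the defining supremum yields, for all $0<\rho\le r\le r_0$ and $x\in E$,
\[
\frac{\rho^\al \nu(B(x,\rho))^{1/q}}{\mu(B(x,\la\rho))^{1/p}}
\le C\,\rho^{\al + \sig/q - s/p},
\]
so that $\Theta_{q,\la}(r)\le C\,r^{\al+\sig/q-s/p}$ whenever the exponent $\al+\sig/q-s/p$ is nonnegative. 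An elementary rearrangement shows that this exponent is nonnegative precisely when $q(s-\al p)\le \sig p$ and strictly positive precisely when the inequality is strict.

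For the case $\nu=\mu$, the upper bound~\eqref{eq-dim-nu-only-sig} is avoided by using the local doubling property~\eqref{eq-def-doubl-on-E} of $\mu$ to replace $\mu(B(x,\la\rho))$ by $\mu(B(x,\rho))$ up to a multiplicative constant. The quotient then collapses to $\rho^\al \mu(B(x,\rho))^{1/q-1/p}$, and since $1/q-1/p<0$, the lower bound~\eqref{eq-dim-mu-only-s} gives $\Theta_{q,\la}(r)\le C\,r^{\al-s(1/p-1/q)}$, whose exponent is nonnegative (resp.\ positive) exactly when $q(s-\al p)\le sp$ (resp.\ strict).

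With these estimates in hand, part~\ref{it-dim-trunc} is immediate: the hypothesis $q(s-\al p)\le \sig p$ (or $\le sp$) gives $\Theta_{q,\la}(r_0)<\infty$, and Theorem~\ref{thm-comp-for-q'<q-gen}\,\ref{it-Th-trunc} supplies boundedness. For part~\ref{it-dim-comp}, the strict inequality $q(s-\al p)<\sig p$, together with the standing hypothesis $\sig>s-\al p$ (which is what guarantees that the admissible range for $q$ contains values above $p$), allows us to choose a slightly larger exponent $q^*>q$ still satisfying $q^*(s-\al p)\le \sig p$; the argument for $\nu=\mu$ is identical with $\sig$ replaced by $s$. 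Then $\Theta_{q^*,\la}(r_0)<\infty$, and applying Theorem~\ref{thm-comp-for-q'<q-gen}\,\ref{it-Th<infty-comp} with $q^*$ in place of $q$ gives a compact embedding into $L^{q'}(E,\nu)$ for every $q'<q^*$, in particular for $q'=q$.

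No serious obstacle arises: the argument is essentially a bookkeeping exercise with the dimension exponents, with all the analytic work already absorbed into the self-improvement statement of Theorem~\ref{thm-comp-for-q'<q-gen} (which in turn rests on Theorem~\ref{thm-two-weight-PI-eps}). The one point that requires care is the strictness of the inequality in~\ref{it-dim-comp}: it is precisely what creates the headroom needed to enlarge $q$ to $q^*$ and fall into the hypothesis of part~\ref{it-Th<infty-comp}, whereas in~\ref{it-dim-trunc} the truncation property allows us to reach the endpoint exponent directly.
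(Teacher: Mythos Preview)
Your proof is correct and follows essentially the same approach as the paper's: estimate $\Theta_{q,\la}(r)$ via the dimension bounds and invoke Theorem~\ref{thm-comp-for-q'<q-gen}, enlarging $q$ to some $q^*>q$ in part~\ref{it-dim-comp} so that Theorem~\ref{thm-comp-for-q'<q-gen}\,\ref{it-Th<infty-comp} applies with $q'=q$. One minor remark: your parenthetical about the role of the hypothesis $\sig>s-\al p$ is slightly off --- since the corollary already assumes $q>p$, the existence of $q^*>q>p$ with $q^*(s-\al p)\le\sig p$ follows directly from the strict inequality $q(s-\al p)<\sig p$, without separately invoking $\sig>s-\al p$.
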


\begin{proof}
For \ref{it-dim-comp}, choose $\qb>\max\{q,p\}$ such that $\qb(s-\al p)<\sig p$.
Consider the local Poincar\'e constant $\Theta_{\qb,\la}(r)$, defined 
as in~\eqref{eq-def-sup-Theta} with $q$ replaced by~$\qb$.
Using \eqref{eq-dim-mu-only-s} and \eqref{eq-dim-nu-only-sig}
we see that $\Theta_{\qb,\tau}(r_0)<\infty$.
Theorem~\ref{thm-comp-for-q'<q-gen}~\ref{it-Th<infty-comp}, with $q$ and $q'$ 
replaced by~$\qb$ and~$q$, 
then concludes the proof in the general case $\nu\ne\mu$.
The special case $\nu=\mu$ is similar and uses only \eqref{eq-dim-mu-only-s}.

Part~\ref{it-dim-trunc}
follows directly from Theorem~\ref{thm-comp-for-q'<q-gen}\,\ref{it-Th-trunc}
and the fact that the dimension conditions~\eqref{eq-dim-mu-only-s} 
and~\eqref{eq-dim-nu-only-sig} in Assumptions~\ref{ass-doubl} imply
\begin{equation*}   
\Theta_{q,\la}(r) \le C_0 \sup_{0<\rho\le r} \rho^{\al+\sig/q-s/p}.\qedhere
\end{equation*}
\end{proof}

Theorem~\ref{thm-comp-for-q'<q-gen} and
Corollary~\ref{cor-comp-dim} apply in particular to $\Y=M^{\al,p}(X,\mu)$
and extend earlier results from  
Haj\l asz~\cite[Theorem~8.7]{Haj-ContMat}, dealing with $\nu=\mu$,
 $E=X$ and $\al\le1$
(the case $\al<1$ follows from~\cite{Haj-ContMat} 
by using the snowflaked metric $d(x,y)^\al$).
Note that the dimension condition \eqref{eq-dim-mu-only-s} in
Assumptions~\ref{ass-doubl} is necessary for such Sobolev embeddings 
when $X$ is bounded and uniformly perfect, 
by Alvarado--G\'orka--Haj\l asz~\cite[Corollary~24]{AlGoHaj}.
In Proposition~\ref{prop-Th<infty-necess}, we extend their
  result to the two-weighted situation.

Since certain types of Besov spaces embed into $M^{\al,p}$, by e.g.\ 
Gogatishvili--Koskela--Shanmugalingam~\cite[Lemma~6.1]{GoKoSh} and
Mal\'y~\cite[Lemma~3.16]{MalyBesov}, similar embeddings for Besov spaces
also follow from Theorem~\ref{thm-comp-for-q'<q-gen} and
Corollary~\ref{cor-comp-dim}.

The following examples illustrate why we only impose the doubling and dimension
conditions in Assumptions~\ref{ass-doubl} on balls centred in $E$.
The local assumption about the \p-Poincar\'e inequalities~\eqref{eq-def-abstr-PI}
and~\eqref{PI-ineq}  can be justified similarly.

\begin{example}  \label{eq-why-s-on-E}
Consider $X=\R^n$ with the weight 
\[
w(x_1,\ldots,x_n)=|x_1|^{-\theta}, \quad x_1\ne0 \text{ and } 0<\theta<1.
\]
Since $w$ is easily verified to be an $A_1$ weight (see~\cite[p.10]{HeKiMa}),
the measure $d\mu=w\,dx$ is doubling and supports 
a 1-Poincar\'e inequality (i.e.\ \eqref{eq-def-abstr-PI} with $p=\al=\la=1$)
for all balls $B\subset\R^n$.
It is easily verified that for $x=(x_1,x_2,\ldots,x_n)$ and $r>0$, 
the measure $\mu(B(x,r))$ is comparable to $r^{n-\theta}$ if $r\ge\tfrac12|x_1|$
and to $|x_1|^{-\theta}r^n$ otherwise.

This implies that if $E$ is contained in the hyperplane $\{x\in\R^n: x_1=0\}$
then the dimension condition   
\eqref{eq-dim-mu-only-s} for $\mu$ and balls centred on $E$
holds with $s=n-\theta<n$, 
while for general balls centred in $\R^n$ we must have $s\ge n$. 
Since the critical exponent $q=\sig p/(s-\al p)$ in Corollary~\ref{cor-comp-dim}
increases as $s$ decreases, 
allowing for the smaller local ``dimension'' $s=n-\theta$ gives better
embedding results than $s=n$. 
A similar argument applies e.g.\ when considering trace embeddings
with respect to the weight $w(x)=\dist(x,\bdry\Om)^{-\alpha}$ for
sufficiently regular domains $\Om\subset\R^n$, 
see Theorem~\ref{thm-mu-al-new} and Example~\ref{ex-trace-emb} below.
\end{example}

\begin{example}  \label{ex-Whitney-modif}
Let $E\subset\R^n$ be compact and consider a weight $0<w\in L^1\loc(\R^n)$
satisfying $w\equiv1$ on $E$ and $\vint_Q w\,dx=1$ for every cube $Q$
in the Whitney decomposition of $\R^n\setm E$. 
The weighted measure $d\mu(x)=w(x)\,dx$ then satisfies, for 
all balls centred in $E$, the same doubling and dimension conditions
in Assumptions~\ref{ass-doubl} as the Lebesgue measure. 
It can therefore be used in Theorem~\ref{thm-comp-for-q'<q-gen}
  and Corollary~\ref{cor-comp-dim}.

However, $\mu$ can fail any doubling conditions for balls
outside $E$ and the restriction $\mu|_E$ need not be doubling either.
Thus, Assumptions~\ref{ass-doubl} are substantially weaker than
similar global conditions or assumptions for the restricted measure.
\end{example}

\section{Newtonian  spaces and 
Theorem~\ref{thm-Mazya-cond-intro-new}}
\label{sect-Newt}

Another generalization of Sobolev spaces to
metric spaces are Newtonian spaces $\Np$.
Their definition is based on the following notion
of upper gradients, introduced in Heinonen--Koskela~\cite{HeKo98}:
A Borel function $g:X\to[0,\infty]$ 
is an \emph{upper gradient} of $u:X\to[-\infty,\infty]$
if for all rectifiable curves  $\gamma: [0,l_{\gamma}] \to X$,
\begin{equation} \label{ug-cond}
        |u(\gamma(0)) - u(\gamma(l_{\gamma}))| \le \int_{\gamma} g\,ds,
\end{equation}
where $ds$ is the arc length parametrization of $\ga$
and the left-hand side is interpreted as $\infty$ if at least
one of the terms therein is infinite.

If $u$ has an upper gradient in $L\loc^p(X,\mu)$, then
it has a $\mu$-a.e.\ unique \emph{minimal {\rm(}\p-weak\/{\rm)} upper gradient} 
$g_u \in L\loc^p(X,\mu)$, see Shan\-mu\-ga\-lin\-gam~\cite[Corollary~3.7]{Sh-harm}.

\begin{deff}  
The \emph{Newtonian space} on $X$ is
\[
\Np (X,\mu) = \biggl\{u:  \|u\|_{\Np(X,\mu)} := \biggl( \int_X |u|^p \, \dmu
                + \inf_g\int_X g^p \, \dmu \biggr)^{1/p}  <\infty \biggr\},
\]
where the infimum is taken over all (\p-weak) upper gradients $g$ of $u$.
The \emph{Dirichlet space} $\Dp(X,\mu)$ consists of all $u\in L^1\loc(X,\mu)$
having an upper gradient in $L^p(X,\mu)$, and
is equipped with the seminorm $\|g_u\|_{L^p(X,\mu)}$.
\end{deff}

Newtonian spaces were defined by Shan\-mu\-ga\-lin\-gam~\cite{Sh-rev}
and are in general larger than both $\Mp$ and $P^{1,p}_\tau$, see
Haj\l asz~\cite[Corollary~10.5]{Haj-ContMat}.
For example, if $X$ has no rectifiable curves then 
$N^{1,p}(X,\mu)=L^p(X,\mu)$,
while $M^{1,p}(X,\mu)$ is in general different from $L^p(X,\mu)$,
see Proposition~\ref{prop-q=p-M-al}.
The truncation property assumed in Theorems~\ref{thm-two-weight-PI-eps}\,\ref{it-trunc-prop} and 
\ref{thm-comp-for-q'<q-gen}\,\ref{it-Th-trunc} and Corollary~\ref{cor-comp-dim}\,\ref{it-dim-trunc}
is valid for $\Np$ and the minimal \p-weak upper gradients and makes it possible
to treat also the limiting exponent $q'=q$ and to
obtain the optimal sufficient and necessary conditions 
in Theorem~\ref{thm-Mazya-cond-intro-new}.

For various properties of Newtonian functions we refer to
Shanmugalingam~\cite{Sh-rev}, \cite{Sh-harm}, Bj\"orn--Bj\"orn~\cite{BBbook}
and Heinonen--Koskela--Shanmugalingam--Tyson~\cite{HKST}.
For \eqref{ug-cond} to make sense,
functions in $\Np(X,\mu)$ and $\Dp(X,\mu)$ are pointwise
defined.
Their equivalence classes are up to sets of zero \p-capacity, where the
(\emph{Sobolev}\/) \p-\emph{capacity} of a set $A \subset X$ is
\begin{equation*}
   \Cp (A) =\inf    \|u\|_{\Np(X,\mu)}^p,
\end{equation*}
with the infimum taken over all $u\in \Np (X,\mu)$ such that
$u\ge 1$ on $A$.

\begin{remark}[\emph{$\Np$ for domains in $\R^n$}]  
By~\cite{Sh-rev}, the space $\Np(\Om,dx)$ coincides for $p>1$ and open $\Om\subset\R^n$
with the usual Sobolev space $W^{1,p}(\Om)$, and 
\begin{equation} \label{eq-g=grad}
g_u=|\grad u| \quad \text{a.e.},
\end{equation}
where $\grad u$ is the distributional gradient.
However, since the equivalence classes in $\Np(\Om,dx)$
are taken with respect to sets of \p-capacity zero, 
$\Np(\Om,dx)$ 
consists only of the quasicontinuous representatives from $W^{1,p}(\Om)$,
considered e.g.\ in Evans--Gariepy~\cite[Chapter~4]{EvGa92}.  

Similarly, \cite[Appendix~A.1]{BBbook} shows that 
if $0<w\in L^1\loc(\Om)$ is a \p-admissible weight in the sense of 
Heinonen--Kilpel\"ainen--Martio~\cite{HeKiMa}, then
$\Np(\Om,\mu)$ corresponds to the weighted
(refined) Sobolev space studied in~\cite[Chapter~4]{HeKiMa}.
In this case, \eqref{eq-g=grad}
holds with $\grad u$ defined through the completion of $C^\infty$ as in
\cite[Section~1.9]{HeKiMa} (and equal to the distributional gradient
when $w^{1/(1-p)}\in L^1\loc(\Om)$).

For general weights, 
the relationship between $\Np$ and other definitions of weighted Sobolev spaces
is rather subtle,  
see Ambrosio--Pinamonti--Speight~\cite{AmPiSp} and Zhikov~\cite{Zhikov}
for counterexamples and sufficient integrability conditions on $w$.
\end{remark}

When $\mu$ is doubling and supports the \p-Poincar\'e
inequality~\eqref{PI-ineq} with $g_u$ on $X$, we have
for $p>1$ and up to suitable equivalence classes and equivalent norms, 
\begin{equation*}   
N^{1,p}(X,\mu) = M^{1,p}(X,\mu) = P^{1,p}_\tau(X,\mu) \cap L^p(X,\mu),
\end{equation*}
see~\cite[Theorem~11.3]{Haj-ContMat}
(together with \cite[Theorem~5.1]{noncomp} when $X$ is not complete).

\begin{remark}[\emph{Embeddings and Lebesgue points for $\Np$}]   
The question of $\ub$ in \eqref{eq-Leb-repr} and $\mu$-Lebesgue points in
Lemma~\ref{lem-est-Leb-pts}
is more delicate for $\Np$ than for $P^{\al,p}_\tau$ and $M^{\al,p}$,
since the equivalence classes in $\Np$ are up to sets of zero \p-capacity.
More precisely, 
$\ub$ might differ from $u$ more than on a set of zero \p-capacity, 
in which case it is not a representative of $u$ in $\Np(X,\mu)$, only
a $\mu$-a.e.-representative.
It is this function $\ub$ that realizes the embeddings into
$L^q(E,\nu)$ in Theorem~\ref{thm-Mazya-cond-intro-new}.

A sufficient condition for $\ub\in\Np(X,\mu)$ is that 
the doubling property for $\mu$
and the Poincar\'e inequality~\eqref{PI-ineq} with $g_u$ 
hold for all balls 
contained in some open neighbourhood of $E$.
Indeed, Proposition~4.8 in~\cite{noncomp} (for $p>1$) and
Proposition~3.7 in~\cite{panu} (for $p=1$) 
guarantee that $\ub=u$ on $E$ outside a set of zero \p-capacity.
\end{remark}

\begin{proof}[Proof of Theorem~\ref{thm-Mazya-cond-intro-new}]
For the sufficiency part, note that the minimal \p-weak upper gradients $g_u$
satisfy the truncation property.
More precisely, for all $l<k$, the functions $g_u\chi_{\{l<u<k\}}$ are 
\p-weak upper gradients of the truncations $\max\{l,\min\{u,k\} \}$
and hence the \p-Poincar\'e inequality \eqref{PI-ineq} 
(and thus \eqref{eq-def-abstr-PI} with $\al=1$) holds for these functions as well. 
Theorem~\ref{thm-comp-for-q'<q-gen}\,\ref{it-Th-trunc} 
thus applies with $G(u)= g_u$.
Since $\al=1$, the conditions $\Theta_{q,\la}(r)<\infty$ and $\Theta_{q,\la}(r)\to0$
become \eqref{eq-bdd-cond-intro} and \eqref{eq-Mazya-cond-intro-new}.

The necessity part follows from Propositions~\ref{prop-comp-imp-M-to0}
and~\ref{prop-Th<infty-necess}, since their conclusions imply
\eqref{eq-bdd-cond-intro} and
\eqref{eq-Mazya-cond-intro-new}, by the measure density condition~\eqref{eq-meas-density}.
\end{proof}

\section{Necessary conditions for compact embeddings}
\label{sect-optimality} 

Condition~\eqref{eq-Mazya-cond-intro-new}
can be compared to Theorem~8.8.3 in Maz\cprime ya~\cite{Mazya},
where the embedding
\(
W^{1,p}(\R^n) \longemb L^q(\R^n,\nu),
\)
was shown to be compact if and only if
\begin{equation}   \label{eq-Mazya-comp-cond}
\lim_{r\to0} \sup_{x\in\R^n} \frac{r \nu(B(x,r))^{1/q}}{|B(x,r)|^{1/p}} = 0
\quad \text{and} \quad 
\lim_{|x|\to\infty} \sup_{0<r\le1}
            \frac{r \nu(B(x,r))^{1/q}}{|B(x,r)|^{1/p}} = 0,
\end{equation}
where $q>p$ and $|B(x,r)|$ is the Lebesgue measure of $B(x,r)$.
Of course, for embeddings into $L^q(E,\nu)$ with bounded $E$,
the latter condition in \eqref{eq-Mazya-comp-cond} is superfluous.

We shall now show that similar conditions are
necessary for compactness and boundedness
also in metric spaces, under very mild assumptions 
on the measures.

\begin{prop}[Necessity of Maz\cprime ya's condition]
\label{prop-comp-imp-M-to0}
Assume that $\nu$ is nonatomic on $E$, i.e.\ $\nu(\{x\})=0$ for all $x\in E$.
If the embedding 
$\Y \embed L^q(E,\nu)$ is compact, where $\Y$ is $\Np(X,\mu)$, $\Dp(X,\mu)$,
$M^{1,p}(X,\mu)$, $P^{1,p}_1(X,\mu)$ or $P^{1,p}_{1,E}(X,\mu)$, then  for all $\la>1$,   
\begin{equation}   \label{eq-max-attained}
\sup_{x\in X}
     \frac{r \nu(B(x,r)\cap E)^{1/q}}{\mu(B(x,\la r))^{1/p}}
\to 0, \quad \text{as } r\to0,
\end{equation}
where the supremum is taken only over 
balls such that $0<\mu(B(x,\la r))<\infty$.
\end{prop}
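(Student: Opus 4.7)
My plan is to argue by contradiction, constructing cut-off functions that are bounded in $\Y$ but cannot have a Cauchy subsequence in $L^q(E,\nu)$. Negating \eqref{eq-max-attained}, I would extract $\la>1$, $\eps>0$, and sequences $x_n\in X$ and $r_n\to 0$ with $0<\mu(B(x_n,\la r_n))<\infty$ such that, writing $B_n:=B(x_n,\la r_n)$,
\[
r_n\nu(B(x_n,r_n)\cap E)^{1/q}\ge \eps\,\mu(B_n)^{1/p}.
\]

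For the cut-offs I use
\[
u_n(y):=\max\Bigl\{0,\,1-\tfrac{\dist(y,B(x_n,r_n))}{(\la-1)r_n}\Bigr\},
\]
which equals $1$ on $B(x_n,r_n)$, vanishes outside~$B_n$, and has Lipschitz constant $1/((\la-1)r_n)$, and I set $v_n:=(\la-1)r_n\mu(B_n)^{-1/p}u_n$. A direct verification shows that $g_n:=\mu(B_n)^{-1/p}\chi_{B_n}$ is simultaneously an upper gradient of $v_n$, a Haj\l asz $1$-gradient of $v_n$ (via the standard fact that $L\chi_B$ realizes \eqref{eq-def-Haj-gr} for any $L$-Lipschitz function vanishing outside $B$), and, after absorbing a constant from \eqref{eq-p,p-PI-M}, a Poincar\'e pair placing $v_n\in P^{1,p}_1(X,\mu)\subset P^{1,p}_{1,E}(X,\mu)$ via the inclusion~\eqref{eq-M-subset-P}. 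Since $\|g_n\|_{L^p(X,\mu)}=1$ and $\|v_n\|_{L^p(X,\mu)}\le(\la-1)r_n\to 0$, the sequence $\{v_n\}$ is bounded in each of the five admissible spaces. On the other hand, $v_n\equiv(\la-1)r_n\mu(B_n)^{-1/p}$ on $B(x_n,r_n)$, so
\[
\|v_n\|_{L^q(E,\nu)}^q\ge((\la-1)r_n)^q\mu(B_n)^{-q/p}\nu(B(x_n,r_n)\cap E)\ge((\la-1)\eps)^q.
\]

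By the assumed compactness a subsequence $v_{n_k}$ would converge in $L^q(E,\nu)$ to some $v$ with $\|v\|_{L^q(E,\nu)}\ge(\la-1)\eps>0$, and after a further extraction I may assume $v_{n_k}\to v$ pointwise $\nu$-a.e.\ on~$E$. I would then split on whether $\{x_{n_k}\}$ admits a Cauchy sub-subsequence. If it does, such a sub-subsequence converges to some $x_\infty$ in the completion $\widehat X$; since $\diam B_{n_k}\to 0$, every $z\in E\setm\{x_\infty\}$ lies outside $B_{n_k}$ for all large $k$, whence $v_{n_k}(z)=0$ and $v(z)=0$ for $\nu$-a.e.\ such $z$, while the singleton $\{x_\infty\}\cap E$ is $\nu$-null by the nonatomicity hypothesis (or is empty when $x_\infty\notin E$). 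If no such Cauchy sub-subsequence exists, then no $z\in X$ can belong to $B_{n_k}$ for infinitely many $k$---otherwise the corresponding $x_{n_{k_j}}$ would converge to $z$ and give a Cauchy sub-subsequence---so $v_{n_k}\to 0$ pointwise on $X$ and hence $v\equiv 0$ $\nu$-a.e. Either conclusion contradicts $\|v\|_{L^q(E,\nu)}>0$.

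The main obstacle is the uniform construction of the test functions across all five admissible spaces, since each uses a different surrogate gradient; the least direct point is the Poincar\'e pair for $P^{1,p}_1$ and $P^{1,p}_{1,E}$, which I handle indirectly through \eqref{eq-p,p-PI-M} and \eqref{eq-M-subset-P} rather than by exhibiting a gradient satisfying the Poincar\'e inequality on all balls by hand. The dichotomy on $\{x_{n_k}\}$ and the appeal to nonatomicity are then essentially mechanical.
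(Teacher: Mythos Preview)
Your proof is correct and follows the same strategy as the paper's: the cut-off test functions $v_n$ are identical to the paper's $u_j$ (up to notation), and the verification that they are bounded in each admissible choice of $\Y$ proceeds the same way, via the Haj\l asz gradient and the inclusions~\eqref{eq-P-subset-P,E} and~\eqref{eq-M-subset-P}. The one point of divergence is how you conclude that the $L^q$-limit $v$ vanishes $\nu$-a.e.\ on~$E$. The paper isolates this step as a separate lemma (Lemma~\ref{lem-uj-to-0}), which requires separability of $E$ (obtained from the local finiteness of~$\mu$) and argues via a Lindel\"of covering of $E$ by balls on which a subsequence of the $u_j$ vanishes; you instead run a direct dichotomy on whether the sequence of centres $\{x_{n_k}\}$ admits a Cauchy sub-subsequence in the completion~$\widehat{X}$. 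Your route is slightly more elementary in that it avoids the separability step altogether, while the paper's lemma is formulated in reusable generality for arbitrary functions supported in shrinking balls.
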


\begin{proof} 
Let $\{r_j\}_{j=1}^\infty$ be an arbitrary sequence converging to zero
as $j\to\infty$.
For each $j=1,2,\ldots$\,, find a ball $B_j=B(x_j,r_j)$ such that
\[
\frac{r_j \nu(B_j\cap E)^{1/q}}{\mu(\lambda B_j)^{1/p}}
\ge \min \biggl\{1,\frac{{\cal S}(r_j)}{2}\biggr\},
\]
where ${\cal S}(r)$ denotes the supremum in \eqref{eq-max-attained}.
Consider the functions
\begin{equation}   \label{eq-def-uj}
u_j(x) = a_j \biggl( 1 - \frac{\dist(x,B_j)}{(\la-1)r_j} \biggr)_\limplus
\quad \text{with constants } 
a_j =\frac{(\la-1)r_j}{\mu(\la B_j)^{1/p}}.
\end{equation}
It is easily verified that the distance function $x\mapsto\dist(x,B_j)$ is
1-Lipschitz, and hence $g_j:=\chi_{\la B_j}/\mu(\la B_j)^{1/p}$
is a \p-weak upper gradient of $u_j$ and can be used in the
definition of $M^{1,p}(X,\mu)$ and the other spaces as well,
cf.\  \eqref{eq-P-subset-P,E} and \eqref{eq-M-subset-P}.
As both $u_j$ and $g_{j}$ vanish outside $\la B_j$, we have
\[
\int_X u_j^p\,d\mu \le a_j^p \mu(\la B_j) \le ((\la-1)r_j)^p
\quad \text{and} \quad
\int_X g_{j}^p\,d\mu \le 1,
\]
i.e.\ $\{u_j\}_{j=1}^\infty$  is bounded in $\Y$. 
By compactness of the embedding $\Y\embed L^q(E,\nu)$, 
there exists a subsequence (also denoted
$\{u_j\}_{j=1}^\infty$) which converges  in $L^q(E,\nu)$ to some $u\in L^q(E,\nu)$. 
Since $0<\mu(B)<\infty$ for all balls centred in $E$, 
\cite[Proposition~1.6]{BBbook} implies that $E$ is separable.
We therefore conclude from Lemma~\ref{lem-uj-to-0} below
that $u=0$ $\nu$-a.e.\ in $E$.
Hence, as $u_j=a_j$ in $B_j$, it follows from the choice of $B_j$ that
\[
\min \biggl\{ 1, \frac{{\cal S}(r_j)}{2} \biggr\}
\le \frac{r_j \nu(B_j\cap E)^{1/q}}{\mu(\lambda B_j)^{1/p}}
    \le \frac{1}{\la-1} \|u_j\|_{L^q(E,\nu)} \to0 \quad \text{as } 
j\to\infty,   
\]
which proves~\eqref{eq-max-attained}.
\end{proof}

\begin{lem}  \label{lem-uj-to-0}
Let $Y$ be a separable metric space equipped with a nonatomic 
Borel measure $\nu$. 
Assume that $u_j\in L^q(Y,\nu)$ are such that $\supp u_j\subset B(y_j,r_j)=:B_j$ for some
$y_j\in Y$ and $r_j\to 0$.
If $u_j\to u$ in $L^q(Y,\nu)$ then $u=0$ $\nu$-a.e.\ in $Y$.
\end{lem}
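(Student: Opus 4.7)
My approach is to extract from $\{u_j\}$ a subsequence on which the supports $B_{j_k}$ concentrate sharply, and then to kill the limiting set using nonatomicity. The starting point is the elementary observation that since $\supp u_j\subset B_j$, the limit $u$ must satisfy
\[
\|u\|_{L^q(Y\setminus B_j,\nu)} \le \|u-u_j\|_{L^q(Y,\nu)} \to 0 \quad \text{as } j\to\infty.
\]

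The first key step is to pass to a subsequence $\{j_k\}$ with $\|u\|_{L^q(Y\setminus B_{j_k},\nu)}^q<2^{-k}$. Setting $A_k=Y\setminus B_{j_k}$, the finite measure $d\tilde\nu:=|u|^q\,d\nu$ then satisfies $\sum_k \tilde\nu(A_k)<\infty$, so the classical Borel--Cantelli lemma gives $\tilde\nu(\limsup_k A_k)=0$, i.e.\ $u=0$ $\nu$-a.e.\ on $\limsup_k A_k = Y\setminus \liminf_k B_{j_k}$.

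The second key step is to observe that $\liminf_k B_{j_k}$ is very small: any $y$ in this set satisfies $d(y,y_{j_k})<r_{j_k}$ for all large $k$, and since $r_{j_k}\to0$, this forces $y_{j_k}\to y$. As a sequence has at most one limit, $\liminf_k B_{j_k}$ contains at most one point, so its $\nu$-measure vanishes by nonatomicity. Combined with the previous step, this yields $u=0$ $\nu$-a.e.\ on $Y$.

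The main obstacle I anticipated was controlling $u$ on $\limsup_j B_j$, which can carry full measure even when $r_j\to0$ (for instance when $\{y_j\}$ is dense and $r_j$ decays slowly, so that $\nu(B_j)$ need not tend to $0$). The Borel--Cantelli reduction from $\limsup$ to $\liminf$ along a sparse subsequence is precisely what resolves this; note that separability of $Y$ plays no role in this route, although it would be needed in an alternative case-split argument that partitions $Y$ according to accumulation points of $\{y_j\}$.
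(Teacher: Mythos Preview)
Your proof is correct and takes a genuinely different route from the paper's.

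The paper first passes to a subsequence converging $\nu$-a.e., then runs a geometric dichotomy for each ball $B$: either a further subsequence of the $B_{j}$ misses $B$ (forcing $u=0$ a.e.\ in $B$), or eventually all $B_j$ meet $B$ (forcing $u=0$ a.e.\ outside $\overline{B}$). The argument concludes by a Lindel\"of covering step, which is where separability enters. Your argument instead stays in $L^q$: the inequality $\|u\|_{L^q(Y\setminus B_j,\nu)}\le\|u-u_j\|_{L^q(Y,\nu)}$ feeds directly into Borel--Cantelli for the finite measure $|u|^q\,d\nu$, reducing the problem to showing that $\liminf_k B_{j_k}$ contains at most one point---which is immediate from $r_{j_k}\to0$ and uniqueness of limits. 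Your route is shorter, avoids the case split, and (as you note) does not use separability at all, so it actually proves a slightly stronger statement than the paper does.
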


\begin{proof}
By passing to a subsequence, we can assume that $u_j\to u$
$\nu$-a.e.\ in $Y$.
Let $B=B(x,r)$ be an arbitrary ball. Then there are two possibilities:
\begin{enumerate}
\item \label{u=0-in-B}
There exists a subsequence $(B_{j_k})_{k=1}^\infty$ of $(B_j)_{j=1}^\infty$ 
such that $B_{j_k}\cap B=\emptyset$ for all $k=1,2,\ldots$\,.
Then $u=0$ $\nu$-a.e.\ in $B$ because of the $\nu$-a.e.\ convergence of $u_j$.
\item \label{u=0-outside-B}
There exists $k$ such that $B_{j}\cap B \ne\emptyset$ for all $j\ge k$,
and hence $\supp u_j$ lies within an $r_k$-neighbourhood of $B$. 
Letting $k\to\infty$ and hence $r_k\to0$, the $\nu$-a.e.\ convergence of 
$u_j$ then implies that $u=0$ $\nu$-a.e.\ in $Y\setm \itoverline{B}$.
\end{enumerate}
Now, we distinguish two cases:

If there is $x\in Y$ such that \ref{u=0-outside-B} 
holds for all $r>0$ then $u=0$ $\nu$-a.e.\ in $Y\setm \{x\}$ and hence
also $\nu$-a.e.\ in $Y$ by the assumption that $\nu$ is nonatomic.

Assume therefore that for every $x\in Y$, there exists $r_x>0$ such that
\ref{u=0-outside-B} fails for $r_x$, in which case \ref{u=0-in-B} must
hold for $B_x=B(x,r_x)$.
Since $Y$ is separable, and thus Lindel\"of (by e.g.\ 
\cite[Proposition~1.5]{BBbook}),
we can choose among the balls $B_x$, $x\in Y$, countably many balls
$B_{x_j}$, $j=1,2,\ldots$\,, so that  $Y\subset \bigcup_{j=1}^\infty B_{x_j}$.
As $u=0$ $\nu$-a.e.\ in each $B_{x_j}$, the claim follows by the
subadditivity of $\nu$.
\end{proof}

\begin{prop}  \label{prop-Th<infty-necess}
If the embedding 
$\Y \embed L^q(E,\nu)$ is bounded, where $\Y$ is $\Np(X,\mu)$, $\Dp(X,\mu)$,
$M^{1,p}(X,\mu)$, $P^{1,p}_1(X,\mu)$ or $P^{1,p}_{1,E}(X,\mu)$,  then for all  $\la>1$ and $r_0>0$,
\begin{equation*}   
\sup_{0<r\le r_0} \sup_{x\in X}
     \frac{r \nu(B(x,r)\cap E)^{1/q}}{\mu(B(x,\la r))^{1/p}} <\infty,
\end{equation*}
where the suprema are taken only over 
balls such that $0<\mu(B(x,\la r))<\infty$.

\end{prop}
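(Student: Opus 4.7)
\textbf{Proof proposal for Proposition~\ref{prop-Th<infty-necess}.}
The plan is to run the argument of Proposition~\ref{prop-comp-imp-M-to0} without the subsequence extraction step, using bounded rather than compact embedding. Fix $\la>1$ and $r_0>0$. For each admissible triple $(x_0,r,\la)$ with $0<r\le r_0$ and $0<\mu(B(x_0,\la r))<\infty$, set $B=B(x_0,r)$, and consider the test function defined exactly as in~\eqref{eq-def-uj}:
\[
u_B(x) = a\biggl(1-\frac{\dist(x,B)}{(\la-1)r}\biggr)_{\limplus}, \qquad a=\frac{(\la-1)r}{\mu(\la B)^{1/p}}.
\]
Since $u_B\equiv a$ on $B$ and $u_B\ge 0$ everywhere,
\[
\|u_B\|_{L^q(E,\nu)}\ge a\,\nu(B\cap E)^{1/q}=\frac{(\la-1)r\,\nu(B\cap E)^{1/q}}{\mu(\la B)^{1/p}},
\]
so it suffices to show that $\|u_B\|_{\Y}\le C$ with $C$ depending only on $\la$, $r_0$ (and the space $\Y$), and then invoke the assumed boundedness of $\Y\embed L^q(E,\nu)$.

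The uniform $\Y$-bound is obtained exactly as in the proof of Proposition~\ref{prop-comp-imp-M-to0}. Since $u_B$ is $1/\mu(\la B)^{1/p}$-Lipschitz and supported in $\la B$, the function $g_B:=\chi_{\la B}/\mu(\la B)^{1/p}$ is a \p-weak upper gradient and also a Haj\l asz 1-gradient, as well as a valid gradient in the Poincar\'e-type Definition~\ref{deff-al-poinc-spa}; in all cases $\|g_B\|_{L^p(X,\mu)}\le 1$. Moreover, since $u_B$ vanishes outside $\la B$ and $0\le u_B\le a$,
\[
\|u_B\|_{L^p(X,\mu)}\le a\,\mu(\la B)^{1/p}=(\la-1)r\le (\la-1)r_0.
\]
Putting these two estimates together bounds $\|u_B\|_\Y$ independently of the choice of $B$, for each of the spaces $\Np(X,\mu)$, $\Dp(X,\mu)$, $M^{1,p}(X,\mu)$, $P^{1,p}_1(X,\mu)$ and $P^{1,p}_{1,E}(X,\mu)$, with whichever of the natural norms in~\eqref{eq-choose-norm-gen} or in Theorem~\ref{thm-Mazya-cond-intro-new} is used (for the $L^1(E,\nu)$ summand one notes $\|u_B\|_{L^1(E,\nu)}\le\|u_B\|_{L^q(E,\nu)}\nu(E\cap\la B)^{1-1/q}$, which can be absorbed, or one bounds $\|u_B\|_{L^1(X,\mu)}\le (\la-1)r_0\,\mu(\la B)^{1-1/p}$ and uses the first alternative in~\eqref{eq-choose-norm-gen}).

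Combining the two displays, for every admissible $B$,
\[
\frac{(\la-1)r\,\nu(B\cap E)^{1/q}}{\mu(\la B)^{1/p}}\le \|u_B\|_{L^q(E,\nu)}\le C_{\mathrm{emb}}\|u_B\|_{\Y}\le C_{\mathrm{emb}}C,
\]
and dividing by $\la-1$ and taking the suprema over $0<r\le r_0$ and $x\in X$ with $0<\mu(B(x,\la r))<\infty$ yields the claim. The main subtlety, as in Proposition~\ref{prop-comp-imp-M-to0}, is to check that $g_B$ really can be used as the ``gradient'' in each of the five function spaces listed; for $P^{1,p}_{1,E}$ this requires that the Poincar\'e inequality~\eqref{def-PI-ineq-spc} hold (with $g_B$) for all balls centred in $E$ of radius at most $r_0$, which follows by a direct case distinction according to whether such a ball is disjoint from $\la B$, contained in $\la B$, or meets $\la B$ partially, using in the last case the global Lipschitz bound on $u_B$ together with the uniform value of $g_B$ on $\la B$.
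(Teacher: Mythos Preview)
Your proof is correct and follows essentially the same approach as the paper's: construct the same Lipschitz bump function $u_B$, observe that $g_B=\chi_{\la B}/\mu(\la B)^{1/p}$ serves as a gradient in all five spaces (the paper justifies this via the inclusions \eqref{eq-P-subset-P,E} and \eqref{eq-M-subset-P}, i.e.\ by checking that $g_B$ is a Haj\l asz $1$-gradient, which is cleaner than your sketched case distinction for $P^{1,p}_{1,E}$ --- the ``partial overlap'' case there is not entirely trivial), bound $\|u_B\|_\Y$ uniformly, and combine with the lower bound $\|u_B\|_{L^q(E,\nu)}\ge a\,\nu(B\cap E)^{1/q}$.

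One caveat: your parenthetical about the norms in \eqref{eq-choose-norm-gen} is shaky. The absorption argument for the $L^1(E,\nu)$ summand fails unless $C_{\mathrm{emb}}\,\nu(E\cap\la B)^{1-1/q}<1$, which is not guaranteed; and the alternative bound $\|u_B\|_{L^1(X,\mu)}\le(\la-1)r_0\,\mu(\la B)^{1-1/p}$ is not uniform in $B$ when $p>1$. Fortunately, the proposition as stated concerns the five spaces with their natural (semi)norms from the definitions, for which your bounds $\|u_B\|_{L^p(X,\mu)}\le(\la-1)r_0$ and $\|g_B\|_{L^p(X,\mu)}\le1$ suffice --- so the parenthetical is unnecessary and can simply be dropped.
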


\begin{proof}
For $z\in X$ and $0<r\le r_0$, let $B=B(z,r)$ and
\[
u(x) = a \biggl( 1 - \frac{\dist(x,B)}{(\la-1)r} \biggr)_\limplus
\quad 
\text{with } 
a =\frac{(\la-1)r}{\mu(\la B)^{1/p}}.
\]
As in the proof of Proposition~\ref{prop-comp-imp-M-to0}, $g= \chi_{\la B}/\mu(\la B)$
serves as a gradient in the above choices of $\Y$.
The boundedness of $\Y \embed L^q(E,\nu)$  yields  
\[
\frac{r \nu(B(z,r)\cap E)^{1/q}}{\mu(B(z,\la r))^{1/p}}
\le \frac{1}{\la-1} \|u\|_{L^q(E,\nu)} \le \frac{C}{\la-1} \|g\|_{L^p(X,\mu)}
\le \frac{C}{\la-1} <\infty.
\]
Taking supremum over all $z\in X$ and  $0<r\le r_0$ concludes the proof.
\end{proof}

\begin{prop}  \label{prop-tot-bdd}
Assume that $\mu$ satisfies both the doubling condition \eqref{eq-def-doubl-on-E} in
Assumptions~\ref{ass-doubl} and a measure density
condition as in~\eqref{eq-meas-density} for all balls centred in $E$ and of radius 
at most $r_0>0$.  
{\rm(}For example, let $\mu$ be doubling on $E=X$.{\rm)}
If the embedding $\Y \embed L^p(E,\mu)$
is compact, where $\Y$ is $\Np(X,\mu)$, $\Dp(X,\mu)$, $M^{1,p}(X,\mu)$ or
$P^{1,p}_{\tau,E}(X,\mu)$ {\rm(}with $\tau\ge1$ and $0<r\le r_0/\tau$ in
\eqref{def-PI-ineq-spc}{\rm)}, 
then $E$ is totally bounded.
\end{prop}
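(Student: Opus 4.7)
The plan is to prove the contrapositive: if $E$ fails to be totally bounded, then the embedding $\Y \embed L^p(E,\mu)$ cannot be compact. The idea is to exhibit a bounded sequence in $\Y$ that separates uniformly in $L^p(E,\mu)$, built from scaled bump functions supported in disjoint balls centred in $E$.

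First I would use the negation of total boundedness to extract $\delta>0$ and a sequence $\{x_j\}_{j=1}^\infty\subset E$ with $d(x_i,x_j)\ge\delta$ for $i\ne j$, shrinking $\delta$ so that $\delta\le r_0$ (or $r_0/\tau$, to accommodate the $P^{1,p}_{\tau,E}$ case). Then the balls $B_j=B(x_j,\delta/4)$ and their doubles $B(x_j,\delta/2)$ are pairwise disjoint. Using the measure density hypothesis I can set $a_j:=\mu(B_j\cap E)^{-1/p}\in(0,\infty)$ and define the Lipschitz bump
\[
u_j(x)=a_j\Bigl(1-\tfrac{4}{\delta}\dist(x,B_j)\Bigr)_\limplus,
\]
which equals $a_j$ on $B_j$, vanishes off $B(x_j,\delta/2)$, and is $L_j:=4a_j/\delta$-Lipschitz. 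Disjointness of supports gives $\|u_i-u_j\|_{L^p(E,\mu)}^p\ge 2 a_j^p\mu(B_j\cap E)=2$ for $i\ne j$, so no subsequence is Cauchy in $L^p(E,\mu)$.

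Next I would verify that $\{u_j\}$ is bounded in $\Y$ uniformly. The function $g_j:=L_j\chi_{B(x_j,\delta/2)}$ is simultaneously an upper gradient of $u_j$ (since $u_j$ is $L_j$-Lipschitz and supported where $g_j>0$) and a Haj\l asz $1$-gradient (because the inequality $|u_j(x)-u_j(y)|\le d(x,y)(g_j(x)+g_j(y))$ holds trivially when both points lie outside the support and otherwise one of $g_j(x),g_j(y)$ equals $L_j$). This takes care of $\Np$, $\Dp$ and $M^{1,p}$. For $P^{1,p}_{\tau,E}$, I would chain the inclusions \eqref{eq-M-subset-P}, \eqref{eq-P-subset-P,E} and \eqref{eq-P-tau-subset-tau}: the Haj\l asz estimate~\eqref{eq-p,p-PI-M} combined with H\"older's inequality yields $u_j\in P^{1,p}_{1}(X,\mu)\subset P^{1,p}_{1,E}(X,\mu)\subset P^{1,p}_{\tau,E}(X,\mu)$, where the last inclusion uses the local doubling of $\mu$ on $E$. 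Bounding the relevant norms is routine: doubling on balls centred in $E$ gives $\mu(B(x_j,\delta/2))\le C\mu(B_j)$, and measure density gives $\mu(B_j)\le c^{-1}\mu(B_j\cap E)$, whence
\[
\|u_j\|_{L^p(X,\mu)}^p\le a_j^p\mu(B(x_j,\delta/2))\le C/c,\qquad \|g_j\|_{L^p(X,\mu)}^p\le CL_j^p\mu(B_j)\le C/(c\delta^p).
\]

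The sequence $\{u_j\}$ is therefore bounded in each of the listed spaces $\Y$, yet has no $L^p(E,\mu)$-convergent subsequence, contradicting the assumed compactness. The main obstacle is ensuring a single construction works uniformly for all five spaces $\Y$; this is handled by the observation that the scaled characteristic function $g_j$ simultaneously plays the role of upper gradient and Haj\l asz gradient, with the Poincar\'e-type membership in $P^{1,p}_{\tau,E}$ then coming for free from the established inclusions. A secondary technical point is that doubling and measure density are only assumed on balls centred in $E$ of radius $\le r_0$, which is precisely why we arranged $\delta\le r_0$ (and why centring the bumps at $x_j\in E$ is essential).
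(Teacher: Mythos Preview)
Your proof is correct and follows essentially the same route as the paper: both argue by contradiction, extract a $\delta$-separated sequence in $E$, build disjointly supported Lipschitz bumps normalized so that they are uniformly bounded in $\Y$ (using that $g_j=L_j\chi_{2B_j}$ serves simultaneously as upper gradient and Haj\l asz gradient, with membership in $P^{1,p}_{\tau,E}$ following from the inclusions \eqref{eq-M-subset-P}, \eqref{eq-P-subset-P,E}, \eqref{eq-P-tau-subset-tau}), and then use doubling plus measure density to get a uniform lower $L^p(E,\mu)$-bound that obstructs convergence. The only cosmetic differences are your normalization $a_j=\mu(B_j\cap E)^{-1/p}$ versus the paper's $a_j=r/\mu(2B_j)^{1/p}$, and that you show directly $\|u_i-u_j\|_{L^p(E,\mu)}$ is bounded below (note your intermediate expression should read $a_i^p\mu(B_i\cap E)+a_j^p\mu(B_j\cap E)=2$ rather than $2a_j^p\mu(B_j\cap E)$), whereas the paper first identifies the $\mu$-a.e.\ limit as zero and then contradicts $\|u_j\|_{L^p(E,\mu)}\ge Cr>0$.
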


\begin{proof}
Assume that $E$ is not totally bounded. 
Then there exists $r\le r_0$ and
infinitely many balls $B_j=B(x_j,r) \subset X$, $j=1,2,\ldots$\,, such that 
$x_j\in E$ and $2B_j\cap2B_k=\emptyset$ whenever $j\ne k$.
As in the proof of Proposition~\ref{prop-comp-imp-M-to0},
the sequence $\{u_j\}_{j=1}^\infty$, defined by \eqref{eq-def-uj} with $\la=2$
and $r_j=r$, is bounded in $\Y$.
(Note that the doubling property \eqref{eq-def-doubl-on-E} for $\mu$ implies that 
$M^{1,p}(X,\mu) \subset P^{1,p}_{\tau,E}(X,\mu)$,
see~\eqref{eq-P-tau-subset-tau} and~\eqref{eq-M-subset-P}.)

By assumption, $\{u_j\}_{j=1}^\infty$ contains
a subsequence (also denoted $\{u_j\}_{j=1}^\infty$) which converges both in 
$L^p(E,\mu)$ and $\mu$-a.e.\ in $E$ to some $u\in L^p(E,\mu)$. 
Since for every $x\in X$ at most one of $u_j(x)$ is nonzero
by the choice of $B_j$, we conclude that
$u=0$ $\mu$-a.e.\ in $E$ and hence $\|u_j\|_{L^p(E,\mu)}\to 0$.
This contradicts the doubling property \eqref{eq-def-doubl-on-E} 
and the measure density~\eqref{eq-meas-density} of~$\mu$, since
\[
\|u_j\|_{L^p(E,\mu)} \ge a_j \mu(B_j\cap E)^{1/p} 
= \frac{r \mu(B_j\cap E)^{1/p}}{\mu(2B_j)^{1/p}} \ge Cr > 0.\qedhere
\]
\end{proof}

\section{Examples}
\label{sect-examples}

We now give concrete examples when our results can be applied.
Our aim is mainly to demonstrate their flexibility.

\begin{example}[Lower-dimensional target measures]
\label{ex-d-Hausdorff}
A suitable candidate for $\nu$ is the $d$-dimensional Hausdorff measure $\La_d$ 
on a lower-dimensional Ahlfors regular $d$-set, as in 
Jonsson--Wallin \cite[Sections~2.1.1 and~2.1.2]{JonWal}.
More precisely, assume that the dimension condition~\eqref{eq-d-set} 
holds for $E\subset F\subset X$.
Assumptions~\ref{ass-doubl} then hold for $\nu=\La_d|_F$ with
$\de=\sig=d$ and Corollary~\ref{cor-comp-dim}   
yields compactness of embeddings $\Y\embed L^q(E,\La_d)$
when $\Y \subset P^{\al,p}_{\tau,E}(X,\mu)$, $\tau\ge1$, $q(s-\al p) < dp$, 
and $\mu$ satisfies the local doubling and dimension conditions~\eqref{eq-def-doubl-on-E}
and \eqref{eq-dim-mu-only-s}. 

A traditional approach to such embeddings is to
first use trace theorems into Besov spaces on $F$ 
and then embeddings of these Besov spaces into $L^q$ spaces,
followed by the compactness results from 
Haj\l asz--Koskela~\cite[Theorem~5]{HaKo-imbed} or
Haj\l asz--Liu~\cite{hajzu}.
See \cite[Theorem~VII.1 and Proposition~VIII.6]{JonWal} 
together with Peetre~\cite[Th\'eor\`eme~8.1]{PeetreEsp} for such results in
$\R^n$, and 
L.~Mal\'y~\cite[Corollary~3.18 and Proposition~4.16]{MalyBesov} on metric spaces. 

Our approach is direct and avoids the use of Besov spaces. 
We impose assumptions only on small balls centred in $E$,
which can give better exponents, as seen in Examples~\ref{eq-why-s-on-E}
and~\ref{ex-Whitney-modif}.
Our condition~\eqref{eq-dim-mu-only-s} for the exponent~$s$
is in general less restrictive than the one in \cite{MalyBesov}.
We use the Hausdorff measure $\La_d$ satisfying~\eqref{eq-d-set}, 
rather than the
codimensional bounds in \cite[Proposition~4.16]{MalyBesov}.
However, a codimensional measure, introduced by J.~Mal\'y~\cite{Maly}, could also
be used instead of $\La_d$.
\end{example}

Another application of our results is to replace $X$ and $E$ by a
sufficiently regular domain, seen as a metric spaces in its own right,
and equipped with weights given by the distance to the boundary.
An open set $\Om\subset X$ is a \emph{uniform domain},
if there is a constant $A\ge1$
such that for every pair $x, y\in\Om$
there is a curve $\gamma$ in $\Om$ connecting $x$ and $y$, so that its length
is at most $Ad(x,y)$ and for all $z\in\gamma$,
\begin{equation*}    
\dist(z,X\setm\Om) \ge A^{-1} \min\{\ell_{xz},\ell_{yz}\},
\end{equation*}
where $\ell_{xz}$ and $\ell_{yz}$ are the lengths of the subcurves 
of $\gamma$ connecting $z$ to $x$ and $y$, respectively.
Uniform domains were introduced by Martio--Sarvas~\cite{MarSar}.
Typical examples include convex sets 
and bounded Lipschitz domains in $\R^n$, see Aikawa~\cite[p.~120]{Aik}
and Maz$'$ya~\cite[Sections~1.1.8--1.1.11]{MazSobBook}.
There are also many examples of fractal nature, such as the interior of the
von Koch snowflake.
By considering uniform domains as metric spaces in their own right, 
we obtain the following embedding result.
The assumptions on $\mu$ and $\nu$ guarantee that results from~\cite{JMAA}
can be used to obtain suitable estimates and Poincar\'e inequalities 
for the measures in~\eqref{eq-def-mu-al-nu-be}.

\begin{thm}  [Uniform domains with distance weights]
\label{thm-mu-al-new}
Assume the following for a bounded uniform domain $\Om\subset X$\/{\rm:}
\begin{enumerate}
\item \label{it-Ass-D-E}
The local doubling and dimension
conditions~\eqref{eq-def-doubl-on-E}--\eqref{eq-dim-nu-only-sig} 
in Assumptions~\ref{ass-doubl} 
hold for $\mu$ and $\nu$ with exponents $s$ and $\sig>s-p$, for all
balls centred in $\Om$.
\item \label{it-ass-p-PI-gu}
The domain measure $\mu$ supports the Poincar\'e inequality~\eqref{PI-ineq} with
$g_u$ and $\la\ge1$ for all balls $B\subset\Om$.
\end{enumerate}
Consider the measures
\begin{equation}   \label{eq-def-mu-al-nu-be}
d\mu_\al := d(\cdot)^\al\,d\mu|_\Om 
\quad  \text{and} \quad
d\nu_\be := d(\cdot)^\be\,d\nu|_\Om \quad \text{on }\Om,
\end{equation}
where $d(x):=\dist(x,X\setm\Om)$ and $\al,\be\in\R$. 
Then there exist constants $\al_0,\be_0<0$ such that if $\al>\al_0$
and $\be>\be_0$, then the embeddings
\begin{equation*}
\Dp(\Om,\mu_\al) \cap L^1(\Om,\nu_\al) \longemb L^q(\Om,\nu_\al)
\quad \text{and} \quad
\Np(\Om,\mu_\al) \longemb L^q(\Om,\nu_\al)
\end{equation*}
are compact whenever one of the following holds\/{\em:}
\begin{enumerate}
\renewcommand{\theenumi}{\textup{(\roman{enumi})}}%
\renewcommand{\labelenumi}{\theenumi}
\setcounter{enumi}{0}
\item \label{it-large-q} $q>p$ and
\begin{equation}  \label{eq-bdd-with-al-be}
q(s-p) < \sig p + \min\{\be p - \al q,0\},
\end{equation}
\item \label{it-small-q}
$q\le p$ and $s-p<\sig+\be-\al$.
\end{enumerate}
If $q>p$ and the inequality in \eqref{eq-bdd-with-al-be} is 
nonstrict, then the above embeddings are bounded.
Moreover, if $\nu=\mu$, then assumption~\eqref{eq-dim-nu-only-sig}
in~\ref{it-Ass-D-E} is not needed and
the embedding $\Np(\Om,\mu_\al)\embed L^q(\Om,\mu_\be)$
is compact {\rm(}resp.\ bounded\/{\rm)} whenever the above assumptions
{\rm(}resp.\ a nonstrict analogue of \eqref{eq-bdd-with-al-be}\/{\rm)}
hold with $\sig$ replaced by~$s$.
\end{thm}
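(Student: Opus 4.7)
My plan is to regard $\Om$ as a metric measure space in its own right, equipped with the restricted metric and the distance-weighted measures $\mu_\al$ and $\nu_\be$, and then to invoke Corollary~\ref{cor-comp-dim} (exploiting the truncation property of minimal \p-weak upper gradients in the Newtonian case). The main technical content, supplied by the results of~\cite{JMAA}, is that hypotheses~\ref{it-Ass-D-E} and~\ref{it-ass-p-PI-gu}, together with the uniform-domain structure of $\Om$, transfer to $\mu_\al$ (local doubling together with a \p-Poincar\'e inequality of the form~\eqref{PI-ineq} on $\Om$) and to $\nu_\be$ (local doubling) as soon as $\al,\be$ exceed thresholds $\al_0,\be_0<0$ that guarantee local integrability and preserve these structural properties.

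First I would recall from~\cite{JMAA} that for $x\in\Om$ and $0<r\le r_0$ one has two-regime dimension estimates
\[
\mu_\al(B(x,r)) \gtrsim \max\bigl\{d(x)^\al r^s,\ r^{s+\al}\bigr\}
\quad\text{and}\quad
\nu_\be(B(x,r)) \lesssim \min\bigl\{d(x)^\be r^\sig,\ r^{\sig+\be}\bigr\},
\]
with appropriate interpretation when $\al$ or $\be$ is negative. The two regimes correspond to whether $B(x,r)$ stays in the interior ($r\le d(x)/2$, where the weight is essentially frozen and the unweighted measure dictates the estimate) or reaches toward the boundary (where a Whitney-type decomposition built on the uniform chain condition produces the $r^{s+\al}$ and $r^{\sig+\be}$ contributions). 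Inserting these bounds into the local Poincar\'e constant $\Theta_{q,\la}(r)$ from~\eqref{eq-def-sup-Theta}, computed on $(\Om,\mu_\al,\nu_\be)$ with smoothness exponent $\al=1$, yields
\[
\frac{\rho\,\nu_\be(B(x,\rho))^{1/q}}{\mu_\al(B(x,\la\rho))^{1/p}}
\lesssim \max\bigl\{\rho^{1+\sig/q-s/p}\,d(x)^{\be/q-\al/p},\ \rho^{1+(\sig+\be)/q-(s+\al)/p}\bigr\},
\]
where for $\be/q-\al/p\ge 0$ the factor $d(x)^{\be/q-\al/p}$ is uniformly bounded on $\Om$, while for $\be/q-\al/p<0$ one uses $d(x)\gtrsim\rho$ in the interior regime. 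The combined exponent of $\rho$ is nonnegative, respectively strictly positive, exactly when $q(s-p)\le\sig p + \min\{\be p - \al q,0\}$, respectively with strict inequality, which is precisely~\eqref{eq-bdd-with-al-be}. Hence $\Theta_{q,\la}(r_0)<\infty$ yields boundedness and $\Theta_{q,\la}(r)\to 0$ yields compactness.

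With these facts in hand, Corollary~\ref{cor-comp-dim} applied to $\Y=\Np(\Om,\mu_\al)$ and to $\Y=\Dp(\Om,\mu_\al)\cap L^1(\Om,\nu_\be)$ with the norm~\eqref{eq-choose-norm-gen} immediately yields the claimed compactness and boundedness for $q>p$. The case $q\le p$ in~\ref{it-small-q} follows by choosing $q_0>p$ slightly larger than $p$ for which~\eqref{eq-bdd-with-al-be} still holds, which is possible precisely because $s-p<\sig+\be-\al$, combined with H\"older's inequality and the finiteness of $\nu_\be(\Om)$. The special case $\nu=\mu$ is identical with $\sig=s$ throughout, and~\eqref{eq-dim-nu-only-sig} becomes superfluous as it follows from~\eqref{eq-dim-mu-only-s}. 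I expect the main obstacle to be the two-regime geometric analysis of $\mu_\al$ and $\nu_\be$ packaged in~\cite{JMAA}: simultaneously establishing the doubling property and the \p-Poincar\'e inequality for the weighted measure on a possibly irregular uniform domain requires a careful chain-type argument that links interior Whitney balls through uniform curves up to the boundary.
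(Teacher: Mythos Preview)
Your approach is essentially the paper's: regard $\Om$ as the ambient space, use the results of~\cite{JMAA} to transfer doubling and the \p-Poincar\'e inequality to $\mu_\al$ (and doubling to $\nu_\be$), estimate $\mu_\al(B)$ and $\nu_\be(B)$ in the two regimes $\rho\le\tfrac12 d(x)$ and $\rho\ge\tfrac12 d(x)$, and combine these into a bound on $\Theta_{q,\la}(r)$ whose exponent in $\rho$ is exactly $1+\sig/q-s/p+\min\{\be/q-\al/p,0\}$. Your treatment of the two regimes, the case $q\le p$ via an auxiliary $q_0>p$, and the special case $\nu=\mu$ all match the paper.

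One point worth correcting: you repeatedly cite Corollary~\ref{cor-comp-dim}, but the argument you actually carry out---controlling $\Theta_{q,\la}(r)$ directly and invoking the conditions $\Theta_{q,\la}(r_0)<\infty$ and $\Theta_{q,\la}(r)\to0$---is Theorem~\ref{thm-comp-for-q'<q-gen}, not its corollary. This matters: Corollary~\ref{cor-comp-dim} would require you to produce global dimension exponents $s_\al$ and $\sig_\be$ for $\mu_\al$ and $\nu_\be$ and then check $q(s_\al-p)<\sig_\be p$. The natural candidates are $s_\al=\max\{s,s+\al\}$ and $\sig_\be=\min\{\sig,\max\{\sig+\be,0\}\}$, and the resulting condition is \emph{strictly stronger} than~\eqref{eq-bdd-with-al-be} (for instance when $\al,\be>0$). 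The paper makes this point explicitly in the remark following the theorem. So keep your $\Theta$-computation but cite Theorem~\ref{thm-comp-for-q'<q-gen} rather than Corollary~\ref{cor-comp-dim}. A minor related slip: your displayed ``$\max$'' and ``$\min$'' bounds for $\mu_\al(B)$ and $\nu_\be(B)$ are not literally correct uniformly in both regimes (e.g.\ for $\al<0$ deep in the interior), but since your subsequent regime-by-regime computation uses only the regime-appropriate bound, this does not affect the argument.
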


\begin{remark}    
Compactness of the embedding $M^{\theta,p}(\Om,\mu_\al)\embed L^q(\Om,\mu_\be)$
for $\theta>0$ can be shown similarly when $\Om$, rather than being uniform,
satisfies the \emph{corkscrew condition} 
(i.e.\ there is $c>0$ such that for all $x\in\clOm$ and
$0<r\le\diam\Om$, the set $B(x,r)\cap\Om$ contains a ball of radius $cr$) and $s-p$ 
in assumptions~\ref{it-Ass-D-E}, \eqref{eq-bdd-with-al-be}
and~\ref{it-small-q} of Theorem~\ref{thm-mu-al-new}
is replaced by $s-\theta p$.
Indeed, the \p-Poincar\'e inequality for $g_u$ in \ref{it-ass-p-PI-gu}
can be replaced by~\eqref{eq-p,p-PI-M}.
\end{remark}

\begin{remark}    
The proof presented below shows that $\mu_\al$ and $\nu_\be$ satisfy
\eqref{eq-dim-mu-only-s} and \eqref{eq-dim-nu-only-sig} 
with $s$ and $\sig$ replaced by 
\[
s_\al=\max\{s,s+\al\} \quad \text{and} \quad 
\sig_\be=\min\{\sig,\max\{\sig+\be,0\}\}, \quad \text{respectively}.
\]
However, a direct use of Corollary~\ref{cor-comp-dim} with these
exponents would instead of $\sig>s-p$   and the assumptions
\ref{it-large-q} and \ref{it-small-q} in Theorem~\ref{thm-mu-al-new}
require that 
\[
\sig_\be > s_\al-p    
\quad \text{and} \quad   q(s_\al-p) < \sig_\be p.
\]
This would give a less general result than our direct proof (e.g.\ 
when $\al,\be>0$).
\end{remark}

\begin{proof} [Proof of Theorem~\ref{thm-mu-al-new}] 
We shall use Theorem~\ref{thm-comp-for-q'<q-gen}
with $X$ and $E$  replaced by $\Om$.
The doubling property of $\mu$ (and thus of $X$)
implies that the bounded set $\Om$ is totally bounded,
see Heinonen~\cite[Section~10.13]{Heinonen}.
Theorem~4.4 in \cite{JMAA} and its proof ensure that for some $\al_0<0$ and all $\al>\al_0$, 
the measure $\mu_\al$ is doubling and
supports the \p-Poincar\'e inequality~\eqref{PI-ineq} for $g_u$ 
on $\Om$ as the underlying space, with dilation $\la=3A$, where $A$ is the
uniformity constant for $\Om$.
Hence, $\Dp(\Om,\mu_\al)\subset P^{1,p}_{3A,\Om}(\Om,\mu_\al)$.

Since $\Om$ satisfies the corkscrew condition, by \cite[Lemma~4.2
and Definition~2.4]{JMAA}, 
we conclude from \cite[Theorem~2.8]{JMAA} that $\nu_\be$ is doubling on
$\Om$ for all $\be>\be_0$ and some $\be_0<0$.
In particular, \eqref{eq-def-doubl-on-E} is satisfied for $\mu_\al$ and $\nu_\be$.
As $\Om$ is connected, also \eqref{eq-dim-est-sig-E} holds for $\nu_\be$, 
see Remark~\ref{rem-doubl-imp-dim}.
To be able to control the local Poincar\'e constant 
\[
\Theta_{q,\la}(r)=\sup_{0<\rho\le r}
\sup_{x\in \Om} \frac{\rho \nu_\be(B(x,\rho))^{1/q}}
{\mu_\al(B(x,\la \rho))^{1/p}},
\]
we need to estimate $\mu_\al(B(x,\rho))$ 
and $\nu_\be(B(x,\rho))$ for $x\in\Om$ and $0<\rho\le r_0$.

First, \cite[Theorem~2.8 (with $\nu$ replaced by $\mu_\al$) 
and Lemma~4.2]{JMAA} show that
\begin{equation*}   
\mu_\al(B(x,\rho)) \ge C \rho^\al \mu(B(x,\rho)\cap\Om) 
\ge C \rho^\al \mu(B(x,\rho))
\quad \text{for } \rho\ge\tfrac12d(x).
\end{equation*}
Similarly, when $\be\le0$, \cite[Theorem~2.8]{JMAA} with $\mu$ replaced by $\nu_\be$ 
implies that 
\[
\nu_\be(B(x,\rho)) \le C \rho^\be \nu(B(x,\rho))
\quad \text{for } \rho\ge\tfrac12d(x),
\]
while the inequality is immediate when $\be>0$.
If $0<\rho\le\tfrac12d(x)$ then $d(y)$ is comparable to $d(x)$ on 
$B(x,\rho)$, which immediately yields that $\mu_\al(B(x,\rho))$ and $\nu_\be(B(x,\rho))$
are  comparable to $d(x)^\al\mu(B(x,\rho))$ and $d(x)^\be\nu(B(x,\rho))$, respectively.

Using this, we now have that
\begin{equation}    \label{eq-s-al-de(x)}
\frac{\nu_\be(B(x,\rho))^{1/q}}{\mu_\al(B(x,\rho))^{1/p}} \le
\begin{cases} 
\displaystyle \frac{C \nu(B(x,\rho))^{1/q}}{\mu(B(x,\rho))^{1/p}} 
             \, d(x)^{\be/q-\al/p},
         & \text{if } 0<\rho\le\tfrac12d(x), \\
\displaystyle \frac{C \nu(B(x,\rho))^{1/q}}{\mu(B(x,\rho))^{1/p}} 
             \, \rho^{\be/q-\al/p},
         & \text{if } \rho\ge\tfrac12d(x).
\end{cases}   
\end{equation}
Since 
\[
d(x)^{\be/q-\al/p} \le C \rho^{\min\{\be/q-\al/p,0\}},
\quad \text{when }0<\rho\le\tfrac12d(x)\le \tfrac12\diam\Om<\infty,
\]
we therefore conclude, using \eqref{eq-dim-mu-only-s}
and \eqref{eq-dim-nu-only-sig}, that in both cases, 
\[
\Theta_{q,\la}(r) \le \sup_{0<\rho\le r} \sup_{x\in\Om}
    \frac{\rho \nu_\be(B(x,\rho))^{1/q}}{\mu_\al(B(x,\rho))^{1/p}}
\le C_0 \sup_{0<\rho\le r} \rho^{1+\sig/q-s/p+\min\{\be/q-\al/p,0\}},
\]
where $C_0$ depends on $r_0$ and other fixed parameters, but not on $r$.
It follows that $\Theta_{q,\la}(r)\to0$ whenever the exponent 
\begin{equation}   \label{eq-exponent-ge0}
1 +\frac{\sig}{q} -\frac{s}{p}
   +\min \biggl\{ \frac{\be}{q} -\frac{\al}{p},0 \biggr\} >0,
\end{equation}
and that $\Theta_{q,\la}(r)<\infty$ 
if the inequality in \eqref{eq-exponent-ge0} is nonstrict. 
It is easily verified that \eqref{eq-exponent-ge0} follows from~\eqref{eq-bdd-with-al-be},  
and hence an application of Theorem~\ref{thm-comp-for-q'<q-gen}
concludes the proof of the general case $\nu\ne\mu$ when $q>p$.

When $q\le p$, the assumptions $\sig>s-p$ and $s-p<\sig+\be-\al$ 
imply that there is always $\qb>p$ satisfying~\eqref{eq-bdd-with-al-be}
(instead of $q$).
Thus, an application of the already proved part,
with $q>p$ replaced by $\qb>p$, provides compact embeddings
into $L^{\qb}$ and thus also into $L^q$.

If $\nu=\mu$, then replacing $\nu$ by $\mu$ in \eqref{eq-s-al-de(x)}
gives that
\[
\Theta_{q,\la}(r) \le C \sup_{0<\rho\le r} \sup_{x\in \Om} 
       \mu(B(x,\rho))^{1/q-1/p} \rho^{1+\min\{\be/q-\al/p,0\}}.
\]
The requirement~\eqref{eq-exponent-ge0} is therefore replaced by 
\[
1 +\frac{s}{q} -\frac{s}{p}
   +\min \biggl\{ \frac{\be}{q} -\frac{\al}{p},0 \biggr\}>0,
\]
which holds when $\sig$ in \eqref{eq-bdd-with-al-be} is replaced by $s$. 
\end{proof}

\begin{proof}[Proof of Proposition~\ref{prop-bdry}]
Note that $M^{\al,p}(\Om,\mu)=M^{\al,p}(\clOm,\mu|_\Om)$, with $\mu|_{\Om}$ extended by zero to $\bdry\Om$.
Remark~\ref{rem-Leb-pts} and Lemma~\ref{lem-est-Leb-pts} then imply that
the limit in~\eqref{eq-extend-limsup} exists $\La_d$-a.e.\ in $E$.
Corollary~\ref{cor-comp-dim}\,\ref{it-dim-comp}, applied to $\Y=M^{\al,p}(\clOm,\mu|_\Om)$ and
$\nu=\La_d|_{F}$, proves the statement for $M^{\al,p}$.

For $\Np$, it follows 
from~Aikawa--Shan\-mu\-ga\-lin\-gam~\cite[Proposition~7.1]{AiSh} that 
the measure $\mu|_{\Om}$, extended by zero to $\bdry\Om$,
is doubling and supports the \p-Poincar\'e inequality~\eqref{PI-ineq}
for $g_u$ on $\clOm$ as the underlying metric space. 
Moreover, $\Np(\Om,\mu)=\Np(\clOm,\mu|_\Om)$
with extensions given by \eqref{eq-extend-limsup} and with 
the same norm, see
Heinonen--Koskela--Shanmugalingam--Tyson~\cite[Lemma~8.2.3]{HKST}
together with Theorem~4.1 and Remark~4.2 in
Bj\"orn--Bj\"orn~\cite{noncomp}
(when $X$ is not complete).
Applying \ref{it-dim-comp} and~\ref{it-dim-trunc} in
Corollary~\ref{cor-comp-dim}  to $\Y=\Np(\clOm,\mu|_{\Om})$ concludes the proof.
\end{proof}

\begin{remark}     \label{rem-when-mu|Om}
Theorems~2.8 and~4.4 in Bj\"orn--Shanmugalingam~\cite{JMAA}
imply that the assumptions on $\mu|_\Om$ in
Proposition~\ref{prop-bdry} are satisfied e.g.\ if
\begin{enumerate}
\item[$\bullet$] $\Om$ satisfies the
  corkscrew condition and $\mu$ is doubling on $X$ (for $M^{\al,p}$).
\item[$\bullet$] $\Om$
is a uniform domain and $\mu$ is doubling and supports the 
\p-Poincar\'e inequality~\eqref{PI-ineq} for $g_u$ on $X$ (for
embeddings from $\Np$).
\end{enumerate}
\end{remark}

\begin{example}[\emph{Trace embeddings for Lipschitz domains}]  \label{ex-trace-emb}   
Since bounded Lipschitz domains $\Om\subset\R^n$ are uniform,  
Proposition~\ref{prop-bdry} and Remark~\ref{rem-when-mu|Om} imply
that the trace embedding
\[
W^{1,p}(\Om) \longemb L^q(\bdry\Om,\La_{n-1})
\]
is bounded whenever $q(n-p)\le p(n-1)$ and compact if $q(n-p)<p(n-1)$.
This recovers a classical result, see 
Section~6.10.5 in 
Kufner--John--Fu\v{c}\'\i k~\cite{kjf}.
\end{example}

The following example seems new in the weighted setting. 
A similar case with $\al<0$ was recently considered in Lindquist--Shanmugalingam~\cite{LinShan}.

\begin{example}[\emph{von Koch snowflake with a weight}]   
\label{ex-weighted-von-Koch}
Let $\Om$ be the bounded domain in $\R^2$ whose boundary is the
von Koch snowflake curve of  Hausdorff dimension $d=\log4/{\log3}$.
Then $\Om$ is a uniform domain.
Consider the weighted measure on $\Om$,
\[
d\mu_\al(x)=\dist(x,\bdry\Om)^\al\,dx, \quad  0\le\al<d+p-2.
\]
Theorems~2.8 and~4.4 in~\cite{JMAA} and
Proposition~\ref{prop-bdry} then show that the trace embedding
\[
W^{1,p}(\Om,\mu_\al) \longemb L^q(\bdry\Om,\La_d)
\]
is bounded whenever $q(2+\al-p)\le dp$ and compact if $q(2+\al-p)<dp$.
Other domains with fractal boundaries can be treated similarly.
\end{example}


\begin{thebibliography}{99}

\bibitem{Aik} \art{\auth{Aikawa}{H}}
        {Boundary Harnack principle and Martin boundary for a uniform domain}
        {J. Math. Soc. Japan}{53}{2001}{119--145} 

\bibitem{AiSh} \art{\auth{Aikawa}{H} \AND \auth{Shanmugalingam}{N}}
        {Carleson-type estimates for \p-harmonic functions and the
        conformal Martin boundary of
	John domains in metric measure spaces}
        {Michigan Math. J.}{53}{2005}{165--188}

\bibitem{AlGoHaj}   \art{\auth{Alvarado}{R}, \auth{G\'orka}{P} \AND
        \auth{Haj\l asz}{P}}
        {Sobolev embedding for $M^{1,p}$ spaces is equivalent to a
          lower bound of the measure}
        {J. Funct. Anal.}{279}{2020}{108628}

\bibitem{AlvHaj}  \art{\auth{Alvarado}{R} \AND \auth{Haj\l asz}{P}}
        {A note on metric-measure spaces supporting Poincar\'e inequalities}
        {Atti Accad. Naz. Lincei Rend. Lincei Mat. Appl.}
        {31}{2020}{15--23}


\bibitem{AmPiSp}  \art{\auth{Ambrosio}{L}, \auth{Pinamonti}{A} \AND \auth{Speight}{G}}
        {Weighted Sobolev spaces on metric measure spaces}
        {J. Reine Angew. Math.}{746}{2019}{39--65}

\bibitem{BBbook} \book{\auth{Bj\"orn}{A} \AND \auth{Bj\"orn}{J}}
        {\it Nonlinear Potential Theory on Metric Spaces}
        {EMS Tracts in Mathematics {\bf 17},
        European Math. Soc., Z\"urich}

\bibitem{BBnonopen} \art{Bj\"orn, {A.} \AND Bj\"orn, {J.}}
    {Obstacle and Dirichlet problems on arbitrary nonopen sets
          in metric spaces, and fine topology}
    {Rev. Mat. Iberoam.}{31}{2015}{161--214}


\bibitem{noncomp} \art{\auth{Bj\"orn}{A} \AND \auth{Bj\"orn}{J}}
        {Poincar\'e inequalities and Newtonian Sobolev functions on 
        noncomplete metric spaces}
        {J. Differential Equations}{266}{2019}{44--69}


\bibitem{panu} \artprep{\auth{Bj\"orn}{A}, \auth{Bj\"orn}{J}  \AND 
        \auth{Lahti}{P}}
        {Removable sets for Newtonian Sobolev spaces and a characterization of \p-path 
        almost open sets}{\emph{In preparation}}



\bibitem{hyptrace} \artprep{\auth{Bj\"orn}{A}, \auth{Bj\"orn}{J} \AND \auth{Shanmugalingam}{N}}
        {Extension and trace results for doubling metric measure spaces 
        and their hyperbolic fillings}
        {\emph{Preprint}, 2020}
        {\tt arXiv:2008.00588}

\bibitem{FennAnn}  \art{\auth{Bj\"orn}{J}}
        {Poincar\'e inequalities for powers and products of admissible
         weights}
        {Ann. Acad. Sci. Fenn. Math.}{26}{2001}{175--188}


\bibitem{JMAA} \art{\auth{Bj\"orn}{J} \AND \auth{Shanmugalingam}{N}}
        {Poincar\'e inequalities, uniform domains and extension
         properties for Newton--Sobolev functions in metric spaces}
        {J. Math. Anal. Appl.}{332}{2007}{190--208}


\bibitem{Brezis} \art{\auth{Brezis}{H}}
        {How to recognize constant functions. A connection with Sobolev spaces}
        {Uspekhi Mat. Nauk}{57{\rm :4}}{2002}{59--74 (in Russian)}
        English transl.: \emph{Russian Math. Surveys} {\bf 57}:4 (2002), 693--708.

\bibitem{ButlerET} \artprep{\auth{Butler}{C}}
        {Extension and trace theorems for noncompact doubling spaces}
        {\emph{Preprint}, 2020}
        {\tt arXiv:2009.10168}


\bibitem{ChanWhe85} \art{\auth{Chanillo}{S} \AND \auth{Wheeden}{R. L}}
        {Weighted Poincar\'e and Sobolev inequalities and estimates for weighted 
        Peano maximal functions}
        {Amer. J. Math.}{107}{1985}{1191--1226} 



\bibitem{ChuaRodWhe}  \art{\auth{Chua}{S.-K}, \auth{Rodney}{S} \AND \auth{Wheeden}{R}}
        {A compact embedding theorem for generalized Sobolev spaces}
        {Pacific J. Math.}{265}{2013}{17--57}



\bibitem{EvGa92} \book{\auth{Evans}{L. C}
	\AND \auth{Gariepy}{R. F}}
	{Measure Theory and Fine Properties of Functions}
	{Studies in Advanced Mathematics, CRC Press, Boca Raton, FL, 1992}



\bibitem{GaGoPons} \artprep{\auth{Gaczkowski}{M}, \auth{G\'orka}{P} \AND
        \auth{Pons}{D. J}}
        {Symmetry and compact embeddings for critical exponents in metric-measure spaces}
        {\emph{Preprint}, 2020}
        {\tt arXiv:2002.00076} 


\bibitem{GoKoSh} \art{\auth{Gogatishvili}{A}, \auth{Koskela}{P} 
        \AND \auth{Shanmugalingam}{N}}
        {Interpolation properties of Besov spaces defined on metric spaces}
        {Math. Nachr.} {283}{2010}{215--231}


\bibitem{gorkaKos} \art{\auth{G\'orka}{P} \AND \auth{Kostrzewa}{T}}
        {Sobolev spaces on metrizable groups}
        {Ann. Acad. Sci. Fenn. Math.}{40}{2015}{837--849}
 


\bibitem{Haj96} \art{\auth{Haj\l asz}{P}}
        {Sobolev spaces on an arbitrary metric space}
        {Potential Anal.}{5}{1996}{403--415} 

\bibitem{Haj-ContMat} \artin{\auth{Haj\l asz}{P}}
	{Sobolev spaces on metric-measure spaces}
	{\emph{Heat Kernels and Analysis on Manifolds\textup{,} Graphs and
	Metric Spaces\/ \textup{(}Paris\textup{,} 2002\/\textup{)}}, 
	Contemp. Math. {\bf 338}, pp. 173--218,
	Amer. Math. Soc., Providence, RI, 2003}

\bibitem{HaKo-imbed} \art{\auth{Haj\l asz}{P} \AND \auth{Koskela}{P}}
        {Isoperimetric inequalities and imbedding theorems in
        irregular domains}
        {J. Lond. Math. Soc.}{58}{1998}{425--450}

\bibitem{HaKobook} \book{\auth{Haj\l asz}{P} \AND \auth{Koskela}{P}}
	{Sobolev met Poincar\'e}
	{Mem. Amer. Math. Soc. {\bf 145} (2000)}

\bibitem{HajKoTuo}  \art{\auth{Haj\l asz}{P}, \auth{Koskela}{P} \AND 
        \auth{Tuominen}{H}}
        {Measure density and extendability of Sobolev functions}
        {Rev. Mat. Iberoam.}{24}{2008}{645--669} 

\bibitem{hajzu} \art{\auth{Hajlasz}{P} \AND \auth{Liu}{Z}}
        {A compact embedding of a Sobolev space is equivalent to an embedding
        into a better space}
        {Proc. Amer. Math. Soc.}  {138}{2010}{3257--3266}

\bibitem{HajMar} \art{\auth{Haj\l asz}{P} \AND \auth{Martio}{O}}
        {Traces of Sobolev functions on fractal type sets and 
        characterization of extension domains}
        {J. Funct. Anal.}{143}{1997}{221--246} 

\bibitem{HeiKosTuo} \art{\auth{Heikkinen}{T}, \auth{Koskela}{P}
        \AND \auth{Tuominen}{H}}
        {Sobolev-type spaces from generalized Poincar\'e inequalities}
        {Studia Math.}{181}{2007}{1--16} 


\bibitem{Heinonen} \book{\auth{Heinonen}{J}}
        {Lectures on Analysis on Metric Spaces}
        {Universitext, Springer, New York, 2001}

\bibitem{HeKiMa} \book{\auth{Heinonen}{J}, \auth{Kilpel\"ainen}{T}
	\AND \auth{Martio}{O}}
        {Nonlinear Potential Theory of Degenerate Elliptic Equations}
        {2nd ed., Dover, Mineola, NY, 2006}

\bibitem{HeKo98} \art{\auth{Heinonen}{J} \AND \auth{Koskela}{P}}
        {Quasiconformal maps in metric spaces with controlled geometry}
        {Acta Math.}{181}{1998}{1--61}

\bibitem{HKST} \book{\auth{Heinonen}{J}, \auth{Koskela}{P},
	\auth{Shanmugalingam}{N} \AND \auth{Tyson}{J. T}}
       {Sobolev Spaces on Metric Measure Spaces}
	{New Math. Monographs {\bf 27}, Cambridge Univ. Press, Cambridge, 2015}

\bibitem{Hu} \art{\auth{Hu}{J}}
        {A note on Haj\l asz--Sobolev spaces on fractals}
        {J. Math. Anal. Appl.}{280}{2003}{91--101} 

\bibitem{IvanKrotov} \art{\auth{Ivanishko}{I. A} \AND \auth{Krotov}{V. G}}
        {Compactness of embeddings of Sobolev type on metric measure spaces}
        {Mat. Zametki}{86}{2009}{829--844 (in Russian)}
        English transl.: \emph{Math. Notes} {\bf 86} (2009), 775--788.

\bibitem{JonWal} \book{\auth{Jonsson}{A} \AND \auth{Wallin}{H}}
        {Function Spaces on Subsets of\/ $\R^n$}
        {Math. Rep.~2 (1984), no. 1} 

\bibitem{agnieszka} \art{\auth{Ka\l amajska}{A}}
        {On compactness of embedding for Sobolev spaces defined on metric
        spaces}
        {Ann. Acad. Sci. Fenn. Math.}  {24}{1999}{123--132}


\bibitem{KoMc} \art{\auth{Koskela}{P} \AND \auth{MacManus}{P}}
        {Quasiconformal mappings and Sobolev spaces}
        {Studia Math.}{131}{1998}{1--17}

       
\bibitem{KoYaZh10} \art{\auth{Koskela}{P}, \auth{Yang}{D}  \AND \auth{Zhou}{Y}}
        {A characterization of Haj\l asz--Sobolev and Triebel--Lizorkin spaces 
        via grand Littlewood--Paley functions}
        {J. Funct. Anal.}{258}{2010}{2637--2661} 

\bibitem{KoYaZh11} \art{\auth{Koskela}{P}, \auth{Yang}{D}  \AND \auth{Zhou}{Y}}
        {Pointwise characterizations of Besov and Triebel--Lizorkin spaces 
        and quasiconformal mappings}
        {Adv. Math.}{226}{2011}{3579--3621} 
       
 

        
\bibitem{kreysz} \book{\auth{Kreyszig}{E}}
        {Introductory Functional Analysis with Applications}
        {Wiley Classics Library, Wiley, New York, 1989}

\bibitem{krotov} \art{\auth{Krotov}{V.G}}
        {Compactness criteria in the spaces $L^p$, $p\ge0$}
        {Mat. Sb.}{203{\rm :7}}{2012}{129--148 (in Russian)}
        English transl.: \emph{Sb. Math.} {\bf 203} (2012), 1045--1064.


\bibitem{kjf} \book{\auth{Kufner}{A},  \auth{John}{O} \AND \auth{Fu\v{c}\'\i k}{S}}
        {Function Spaces}
        {Noordhoff, Leyden; Academia, Prague, 1977}

\bibitem{kuf-opic} \art{\auth{Kufner}{A} \AND \auth{Opic}{B}}
        {How to define reasonably weighted Sobolev spaces}
        {Comment. Math. Univ. Carolin.}{25} {1984} {537--554}

\bibitem{LinShan}  \art{\auth{Lindquist}{J} \AND \auth{Shanmugalingam}{N}}
        {Traces and extensions of certain weighted Sobolev spaces on
          $\R^n$ and Besov functions on Ahlfors regular compact
          subsets of $\R^n$}
        {Complex Anal. Synerg.}{7}{2021}{12p}


\bibitem{Maly} \artin{\auth{Mal\'y}{J}}
        {Coarea integration in metric spaces} 
        {\emph{NAFSA 7--Nonlinear Analysis, Function Spaces 
        and Applications} {\bf 7}, pp. 148--192, Czech. Acad. Sci., Prague, 2003} 

\bibitem{MalyBesov} \artprep{\auth{Mal\'y}{L}}
        {Trace and extension theorems for Sobolev-type functions in metric spaces}
        {\emph{Preprint}, 2017}
        {\tt arXiv:1704.06344}

\bibitem{MarOrt}  \art{\auth{Mart\' \i n}{J} \AND \auth{Ortiz}{W. A}}
        {A Sobolev type embedding for Besov spaces defined on doubling
          metric spaces}
        {J. Math. Anal. Appl.}{479}{2019}{2302--2337}

\bibitem{MarSar} \art{\auth{Martio}{O} \AND \auth{Sarvas}{J}}
         {Injectivity theorems in plane and space}
         {Ann. Acad. Sci. Fenn. Ser. A I Math.}{4}{1979}{383--401}


\bibitem{Maz} \art{\auth{Maz{\cprime}ya}{V. G}}
        {A theorem on the multidimensional Schr\"odinger operator}
        {Izv. Akad. Nauk SSSR}{28}{1964}{1145--1172 (in Russian)}
        

\bibitem{Mazya} \book{\auth{Maz{\cprime}ya}{V. G}}
        {Sobolev Spaces}
	{Springer, Berlin, 1985}


\bibitem{MazSobBook}  \book{\auth{Maz{\cprime}ya}{V. G}}
        {Sobolev Spaces}
	{2nd ed., Springer, Heidelberg, 2011}   

\bibitem{Munkres}   \book{\auth{Munkres}{J. R}}
        {Topology}
        {2nd ed., Prentice Hall, 2000}

\bibitem{PeetreEsp} \art{\auth{Peetre}{J}}        
  {Espaces d'interpolation et th\'eor\`eme de Soboleff}
    {Ann. Inst. Fourier\/ \textup{(}Grenoble\/\textup{)}}
  {16{\rm :1}}{1966}{279--317}

\bibitem{Romanovskii} \art{\auth{Romanovski\u{\i}}{N. N}}
        {Sobolev classes on an arbitrary metric measure space:
          compactness of embedding operators}
        {Sibirsk. Mat. Zh.}{54}{2013}{450--467 (in Russian)}
        English transl.: \emph{Sib. Math. J.} {\bf 54} (2013), 353--367.


\bibitem{Sh-rev} \art{\auth{Shanmugalingam}{N}}
        {Newtonian spaces\/\textup{:} An extension of Sobolev spaces
        to metric measure spaces}
        {Rev. Mat. Iberoam.}{16}{2000}{243--279}

\bibitem{Sh-harm} \art{\auth{Shanmugalingam}{N}}
        {Harmonic functions on metric spaces}
        {Illinois J. Math.}{45}{2001}{1021--1050}



\bibitem{Stein} \book{\auth{Stein}{E. M}}
        {Harmonic Analysis: Real-Variable Methods, Orthogonality, and
        Oscillatory Integrals}
        {Princeton University Press, Princeton, NJ, 1993}
        
     

\bibitem{Zhikov}   \art{\auth{Zhikov}{V.V}}
         {On the density of smooth functions in a weighted Sobolev
           space}
         {Dokl. Akad. Nauk}{453}{2013}{247--251 (in Russian)}
         English transl.: \emph{Dokl. Math.} {\bf 88} (2013), 669--673.

\bibitem{Ziem} \book{\auth{Ziemer}{W. P}}
         {Weakly Differentiable Functions}
         {Springer, New York, 1989}

\end{thebibliography}
\end{document}